\documentclass[11pt]{article}
\def\thetitle{Combinatorial sufficient conditions for graph rigidity and applications to random graphs}

\usepackage{graphicx}

\usepackage{amsmath,amssymb}

\usepackage[usenames,dvipsnames,svgnames,table]{xcolor}
\definecolor{CombinatoricaAqua}{HTML}{00698C}
\definecolor{CombinatoricaBlue}{HTML}{3A3293}
\definecolor{CombinatoricaBrown}{HTML}{66220C}
\definecolor{CombinatoricaRed}{HTML}{DF2A27}
\definecolor{HarvardCrimson}{rgb}{0.6471, 0.1098, 0.1882}
\definecolor{DAGreen}{HTML}{339900}

\makeatletter
\let\reftagform@=\tagform@
\def\tagform@#1{\maketag@@@
	{(\ignorespaces\textcolor{CombinatoricaBrown}{#1}\unskip\@@italiccorr)}}
\renewcommand{\eqref}[1]{\textup{\reftagform@{\ref{#1}}}}
\makeatother

\usepackage[backref=page]{hyperref}
\hypersetup{%
	unicode,
	pdfencoding=auto,
	pdfauthor={Peleg Michaeli},
	pdftitle={\thetitle},
	pdfsubject={},
	pdfkeywords={},
	colorlinks=true,
	citecolor=CombinatoricaBlue,
	linkcolor=CombinatoricaAqua,
	anchorcolor=CombinatoricaBrown,
	urlcolor=HarvardCrimson}

\usepackage{amsthm}
\usepackage{thmtools}
\usepackage{bbm}
\usepackage{enumitem}
\usepackage[capitalize]{cleveref}
\Crefname{mainthm}{Theorem}{Theorems}
\Crefname{fact}{Fact}{Facts}
\Crefname{claim}{Claim}{Claims}


\declaretheoremstyle[
spaceabove=\topsep, spacebelow=\topsep,
headfont=\color{CombinatoricaBrown}\normalfont\bfseries,
bodyfont=\itshape,
]{thm}
\declaretheoremstyle[
spaceabove=\topsep, spacebelow=\topsep,
headfont=\color{CombinatoricaBrown}\normalfont\bfseries,
bodyfont=\normalfont,
]{dfn}
\declaretheoremstyle[
spaceabove=0.5\topsep, spacebelow=0.5\topsep,
headfont=\color{CombinatoricaBrown}\normalfont\bfseries,
bodyfont=\normalfont,
]{rmk}

\declaretheorem[style=thm,parent=section]{theorem}
\declaretheorem[style=thm,sibling=theorem]{lemma}
\declaretheorem[style=thm,sibling=theorem]{corollary}
\declaretheorem[style=thm,sibling=theorem]{claim}
\declaretheorem[style=thm,sibling=theorem]{proposition}

\declaretheorem[style=rmk,numbered=no]{remark}

\declaretheorem[style=definition,sibling=theorem]{definition}

\usepackage[nobysame,msc-links,non-sorted-cites]{amsrefs}

\renewcommand{\eprint}[1]{\href{https://arxiv.org/abs/#1}{arXiv:#1}}

\BibSpec{book}{%
	+{}  {\PrintPrimary}                {transition}
	+{,} { \textbf}                     {title} 
	+{.} { }                            {part}
	+{:} { \textit}                     {subtitle}
	+{,} { \PrintEdition}               {edition}
	+{}  { \PrintEditorsB}              {editor}
	+{,} { \PrintTranslatorsC}          {translator}
	+{,} { \PrintContributions}         {contribution}
	+{,} { }                            {series}
	+{,} { \voltext}                    {volume}
	+{,} { }                            {publisher}
	+{,} { }                            {organization}
	+{,} { }                            {address}
	+{,} { \PrintDateB}                 {date}
	+{,} { }                            {status}
	+{}  { \parenthesize}               {language}
	+{}  { \PrintTranslation}           {translation}
	+{;} { \PrintReprint}               {reprint}
	+{.} { }                            {note}
	+{.} {}                             {transition}
	+{}  {\SentenceSpace \PrintReviews} {review}
}
\BibSpec{incollection}{%
  +{}  {\PrintAuthors}                {author}
  +{,} { \textit}                     {title}
  +{.} { }                            {part}
  +{:} { \textit}                     {subtitle}
  +{,} { \PrintContributions}         {contribution}
  +{,} { \PrintConference}            {conference}
  +{}  {\PrintBook}                   {book}
  +{,} { }                            {booktitle}
	+{}  { \PrintEditorsB}              {editor}
	+{,} { }                            {publisher}
  +{,} { \PrintDateB}                 {date}
  +{,} { pp.~}                        {pages}
  +{,} { }                            {status}
  +{,} { \PrintDOI}                   {doi}
  +{,} { available at \eprint}        {eprint}
  +{}  { \parenthesize}               {language}
  +{}  { \PrintTranslation}           {translation}
  +{;} { \PrintReprint}               {reprint}
  +{.} { }                            {note}
  +{.} {}                             {transition}
  +{}  {\SentenceSpace \PrintReviews} {review}
}

\makeatletter
\renewcommand{\PrintNames@a}[4]{%
	\PrintSeries{\name}
	{#1}
	{}{ and \set@othername}
	{,}{ \set@othername}
	{}{ and \set@othername}
	{#2}{#4}{#3}%
}
\makeatother

\makeatletter
\def\mathcolor#1#{\@mathcolor{#1}}
\def\@mathcolor#1#2#3{%
	\protect\leavevmode
	\begingroup
	\color#1{#2}#3%
	\endgroup
}
\makeatother
\definecolor{Red}{rgb}{0.618,0,0}
\definecolor{Blue}{rgb}{0,0,1}
\definecolor{Green}{rgb}{0,0.298,0}

\newcommand{\iso}{\mathrm{i}}

\usepackage{sectsty}
\sectionfont{\color{CombinatoricaBrown}}
\subsectionfont{\color{CombinatoricaBrown}}
\subsubsectionfont{\color{CombinatoricaBrown}}

\usepackage{soul}
\soulregister\ref7

\usepackage{pifont}
\usepackage{calc}

\usepackage[a4paper]{geometry}
\title{\thetitle}

\author{
  Michael Krivelevich\thanks{
    School of Mathematical Sciences,
    Tel Aviv University,
    Tel Aviv 6997801, Israel.
    Email: \href{mailto:krivelev@tauex.tau.ac.il}
                {\tt krivelev@tauex.tau.ac.il}.
    Research supported in part by NSF-BSF grant 2023688.
  }
  \and
  Alan Lew\thanks{
    Faculty of Mathematics, Technion Israel Institute of Technology, Technion City, Haifa 3200003, Israel;
    Email: \href{mailto:alanlew@technion.ac.il}
                {\tt alanlew@technion.ac.il}.
  }
  \and
  Peleg Michaeli\thanks{
    Mathematical Institute,
    University of Oxford,
    Oxford, UK.
    Email: \href{mailto:peleg.michaeli@maths.ox.ac.uk}
                {\tt peleg.michaeli@maths.ox.ac.uk}.
    Research supported by ERC Advanced Grant 883810.
    For the purpose of Open Access, the author has applied a CC BY public
    copyright licence to any Author Accepted Manuscript version arising from
    this submission.
  }
}

\makeatletter
\def\namedlabel#1#2{\begingroup
  #2%
  \def\@currentlabel{#2}%
  \phantomsection\label{#1}\endgroup
}
\makeatother

\usepackage{mleftright}
\mleftright
\newcommand{\defn}[1]{{\bfseries #1}}

\newcommand{\eps}{\varepsilon}
\renewcommand{\phi}{\varphi}

\newcommand{\RR}{\mathbb{R}}

\newcommand{\cM}{\mathcal{M}}

\newcommand{\sm}{\smallsetminus}
\newcommand{\es}{\varnothing}

\newcommand{\floor}[1]{\left\lfloor{#1}\right\rfloor}
\newcommand{\ceil}[1]{\left\lceil{#1}\right\rceil}

\newcommand{\vect}{\mathbf}

\newcommand{\pr}[0]{\mathbb{P}}
\newcommand{\E}[0]{\mathbb{E}}

\newcommand{\whp}[0]{\textbf{whp}}

\newcommand{\Dist}[1]{\mathsf{#1}}
\newcommand{\Bin}{\Dist{Bin}}

\newcommand{\p}{\vect{p}} 
\newcommand{\bq}{\vect{q}} 

\usepackage{tabto}

\usepackage{comment}

\usepackage{subcaption}
\usepackage{tikz,pgfplots}
\pgfplotsset{compat=1.16}
\usetikzlibrary{calc}

\begin{document}
\date{}
\maketitle

\begin{abstract}
  A graph $G=(V,E)$ is called $d$-rigid if, for a generic embedding of its vertices in $\RR^d$, every edge-length preserving continuous motion of the vertices preserves the distances between all pairs of non-adjacent vertices as well. In this paper, we present several new results on the rigidity of random graphs. In particular, we show that there exists $c>0$ such that, for $p\ge 2 \log{n}/n$, the binomial random graph $G(n,p)$ is with high probability (whp) $\lfloor c n p\rfloor$-rigid. This is sharp up to the constant $c$, and complements recent results of Peled and Peleg (in the regime $p= o(n^{-1/2})$), and of Jord\'an, Liu, and Vill\'anyi  (in the constant $p$ regime).
Moreover, we show that for every fixed $d\ge 2$ and $r\ge 501d$, a random $r$-regular graph is whp $d$-rigid, and that for $100/n\le p\le 2\log{n}/n$, the binomial random graph $G(n,p)$ contains whp an $\lfloor np/251\rfloor$-rigid subgraph with at least $(1-e^{-np/2})n$ vertices. Both results are sharp up to the multiplicative constant. In addition, we present a new sufficient condition for rigidity in terms of the minimum codegree of the graph (the minimum number of common neighbours of a pair of vertices in the graph).

A main tool in our arguments is a new combinatorial sufficient condition for rigidity, which provides a common generalization to Whiteley's vertex-splitting lemmas, and to the ``rigid partitions'' method, developed in works by Crapo, Lindemann, Lew, Nevo, Peled and Raz, and by the present authors.
\end{abstract}

\section{Introduction}

A \emph{$d$-dimensional framework} is a pair $(G,\p)$, where $G$ is a finite, simple graph, and $\p$ is a map from $V$ to $\RR^d$. We say that $(G,\p)$ is \emph{rigid} if  every edge-length preserving
 continuous motion of the vertices of $G$, starting from the positions prescribed by $\p$, preserves in fact the distances between all pairs of vertices (or, equivalently, if every such continuous edge-length preserving motion extends to an isometry of $\RR^d$).
We say that $\p:V\to\RR^d$ is \emph{generic} if the $d|V|$ coordinates of $\p(V)$ are algebraically independent over the rationals. A graph $G$ is called \emph{$d$-rigid} if $(G,\p)$ is rigid for some generic $\p:V\to \RR^d$ (or, equivalently, if $(G,\p)$ is rigid for \emph{all} generic $\p:V\to \RR^d$~\cite{AR78}).

It is easy to see that a graph is $1$-rigid if and only if it is connected.
For $d=2$, combinatorial characterizations of $2$-rigid graphs are known~\cites{PollaczekGeiringerberDG, laman1970graphs, LY1982generic, rescki1984network, crapo1990plane}.
However, in the case $d\ge 3$, the existence of a ``reasonable'' combinatorial characterization of $d$-rigidity remains a major open problem in the field. Given the lack of such a combinatorial characterization in higher dimensions, the search for combinatorial properties implying $d$-rigidity is of considerable interest.
For example, Vill\'anyi~\cite{Vil23+} recently established that every $d(d+1)$-vertex-connected graph is $d$-rigid, solving a long-standing conjecture of Lov\'asz and Yemini~\cite{LY1982generic} (who proved the $d=2$ case), and providing a natural counterpart to the (easy) fact that every $d$-rigid graph is $d$-vertex-connected; minimum degree conditions for rigidity were studied in~\cites{KLM25,krivelevich2024minimum, jordan2025degree}.

The study of the rigidity of random graphs has attracted increasing attention in recent years~\cites{jackson2007rigidity, theran2008rigid, kasiviswanathan2011rigidity, kiraly2013coherence, JT2022rigidity, LNPR23, KLM25, peled2024rigidity, jordan2025degree}. In addition to being a natural high-dimensional extension of classical work on connectivity properties of random graphs, research on the rigidity of random graphs provides fertile ground for the development and testing of new tools and methods in rigidity theory.

In this paper, we develop new combinatorial methods for determining the rigidity of graphs, and apply them to obtain new, close to sharp results on the rigidity of random graphs in different density regimes. In addition, we present a new sufficient condition for rigidity in terms of the minimum codegree of the graph, that is, the minimum number of common neighbours of a pair of vertices in the graph. Our main results are presented next.

Let $G(n,p)$ be the distribution over graphs on vertex set $[n]=\{1,\ldots,n\}$ in which every pair of vertices is connected by an edge independently with probability $p$. Recall that for a sequence $\{A_n\}_{n=1}^{\infty}$ of events in a probability space, we say that $A_n$ occurs \emph{with high probability}, or \whp{} for short, if $\pr(A_n)\to 1$ as $n\to \infty$.

In~\cite{LNPR23}, Lew, Nevo, Peled, and Raz proved that for fixed $d\ge 2$, the threshold for $d$-rigidity of $G(n,p)$ coincides with the threshold for having minimum degree at least $d$, namely $p=(\log{n} + (d- 1)\log\log{n})/n$ (extending work by Jackson, Servatius, and Servatius~\cite{jackson2007rigidity} in the case $d=2$). In~\cite{peled2024rigidity}, Peled and Peleg extended this result by showing that below the threshold $p=(2/(1-\log{2}))\log{n}/n$, $G(n,p)$ is \whp{} $\delta(G)$-rigid, where $\delta(G)$ is the minimum degree of $G$.

    In~\cite{KLM25}, we proved that for every $\eps>0$ there is $c>0$ such that if $p\ge (1+\eps)\log{n}/n$, then $G(n,p)$ is \whp{} $\lfloor cnp/\log{(np)}\rfloor$-rigid. We also conjectured that, for $p=\omega(\log{n}/n)$, $G(n,p)$ is \whp{} $(1-o(1))n(1-\sqrt{1-p})$-rigid. A standard edge-counting argument shows that, if true, this result would be optimal. Peleg and Peled~\cite{peled2024rigidity} resolved this conjecture in the regime $(2/(1-\log{2}))\log{n}/n\le p=o(n^{1/2})$, by showing that for such $p$, $G(n,p)$ is \whp{} $\lfloor(1/2-o(1))np\rfloor$-rigid (note that, for $p=o(1)$, $1-\sqrt{1-p}=(1/2+o(1))p$). In the case when $p$ is a constant, Jord\'an, Liu, and Vill\'anyi~\cite{jordan2025degree} very recently resolved the conjecture up to a constant factor, by showing that for every fixed $p\in(0,1)$ there is $c>0$ such that $G(n,p)$ is \whp{} $\lfloor c n\rfloor$-rigid. Here, we extend this result, solving our conjecture, up to a multiplicative constant, for all relevant values of $p$.

\begin{theorem}[Binomial random graphs]\label{thm:gnp}
   For every $\eps>0$ there exists $c=c(\eps)>0$ such that if $p\ge (1+\eps)\log{n}/n$, then $G(n,p)$ is \whp{} $d$-rigid for $d=\lfloor c n p\rfloor$.
\end{theorem}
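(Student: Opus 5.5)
We will derive \cref{thm:gnp} from the combinatorial sufficient condition for rigidity announced in the abstract, which generalizes both Whiteley's vertex-splitting lemmas and the rigid-partition method. The version we plan to use reads as follows. Suppose $V(G)$ has a partition $V_1,\ldots,V_d$ such that
\begin{itemize}
\item each $G[V_i]$ is connected, i.e.\ $1$-rigid; and
\item for every $i\neq j$ there are enough edges, in a suitably spread-out configuration, between $V_i$ and $V_j$.
\end{itemize}
Then $G$ is $d$-rigid. In the original rigid-partition method the second condition asks that the bipartite graph $G[V_i,V_j]$ contain a spanning-ish structure (for instance, every vertex of $V_i$ has a neighbour in $V_j$ after contracting). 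The generalized lemma should allow each part to be a small ``core'' together with attached vertices having at least one neighbour in every other part. Since only edge counts matter up to constants, the target is $d = cnp$ parts of size about $n/d = 1/(cp)$.

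We split into two regimes. In the dense regime, $np \ge C\log n$ for a large constant $C$, take a uniformly random balanced partition into $d=\lfloor cnp\rfloor$ parts of size $m\approx 1/(cp)$. Each $G[V_i]$ is distributed like $G(m,p)$ with $mp = 1/c$ a large constant. Such a graph is \emph{not} connected \whp{}, so the parts cannot simply be taken random. Instead we build them greedily. First we choose $d$ disjoint connected cores, for example trees of size $\Theta(1/p)$ found by BFS or DFS inside the giant component of $G(n,p)$; there is room for this since $d\cdot(1/p)\ll n$ once $c$ is small. We then attach every remaining vertex to a core in which it has a neighbour. Expansion properties of $G(n,p)$ give that every vertex has $\Theta(np)$ neighbours spread over $\Theta(np)$ distinct cores, because a vertex has about $p\cdot|{\rm core}|=\Theta(1)$ expected neighbours in each core. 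Concentration then shows that every vertex, and every pair of parts, meets the cross-edge requirement, via Chernoff bounds and a union bound over $n^2$ pairs, which is affordable when $np\ge C\log n$.

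In the sparse regime, $(1+\eps)\log n/n\le p\le C\log n/n$, the target $d=cnp$ is $\Theta(\log n)$, while the minimum degree is only $\Theta_\eps(\log n)$. Low-degree vertices must be handled separately, as in \cite{KLM25}. We first apply the construction above to the set $L$ of vertices of degree at least $np/10$, obtaining a $d$-rigid subgraph on $L$. Each remaining vertex has degree at least $\delta(G)\ge c'(\eps)\log n\ge d$, provided $c$ is small in terms of $\eps$, and \whp{} these vertices are far apart with almost all their neighbours in $L$. We then add them back one at a time by $0$-extensions: a vertex with $d$ neighbours in a $d$-rigid graph yields a $d$-rigid graph.

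The main obstacle is making the cross-part condition hold for \emph{every} pair $(i,j)$ simultaneously in the sparse-to-moderate regime, where the expected number of edges between two parts, $p\,m^2 = \Theta(1/(c^2p))$, is large but the structure required by the lemma (spread of edges over many vertices) needs careful control. We plan to handle this by revealing the graph in two rounds. The first round, $G(n,p/2)$, is used to build the connected cores. The second, independent round is used for the cross edges, so that for fixed cores these edges are genuinely binomial. Choosing $c$ small makes all the failure probabilities $o(n^{-2})$.
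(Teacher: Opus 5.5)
There is a genuine gap, and it is at the step you flag as the main obstacle. With only $d$ parts, you need every vertex of $V_i$ to have a neighbour in every other part $V_j$. You state this explicitly for the attached vertices, and it is also forced by the rigid-partition criterion with $m=d$ parts, where $u,v\in V_i$ must be connected in $G[V_i,V_j]$ for every $j$. But the parts have about $n/d\approx 1/(cp)$ vertices. So a vertex has roughly $\Bin(|V_j|,p)$ neighbours in $V_j$, with mean about $1/c=O(1)$, and it misses $V_j$ with probability about $e^{-1/c}$. That is a constant, and no fixed choice of $c$ makes it $o(n^{-2})$. Having neighbours in $\Theta(np)$ distinct cores is not the same as having neighbours in all $d-1$ of them: for a random partition, coupon collecting shows that a vertex with about $np$ neighbours meets all other parts only if $np\gtrsim d\log d$. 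Building the parts greedily does not help either. Your condition says precisely that $V_1,\ldots,V_d$ are pairwise disjoint dominating sets. A first-moment computation shows that, for $p\le 1/2$ and $np\to\infty$, \whp{} $G(n,p)$ has no dominating set with at most $\log(np)/(4p)$ vertices, whence $d<4np/\log(np)=o(np)$. This is exactly the $\log(np)$ loss in the earlier bound $\lfloor cnp/\log(np)\rfloor$ of \cite{KLM25}; a partition into $d$ parts in which every vertex must see every other part cannot get past it.

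The missing idea is a criterion in which a vertex only has to see a majority of the parts, with the few exceptional vertices discarded and absorbed later. The paper takes $m=\lceil np/952\rceil$ parts, roughly $2d$ of them, and uses \cref{thm:strong_rigid_partition_1}. There the reduced graph need only be $d$-rigid (not complete), and two vertices $u,v\in V_i$ need only be connected in $G[V_i,V_j]$ for $d$ of the indices $j$.

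\begin{enumerate}
\item A pair $\{i,j\}$ is called good if any two $k$-subsets of $V_i$ and $V_j$, with $k=\lceil 17/p\rceil$, are joined by an edge. This holds independently for each pair with probability at least $0.99$. So the graph $G_0$ of good pairs has $\delta(G_0)\ge 0.98m$ \whp{}, and is $\lfloor 0.48m\rfloor$-rigid by \cref{thm:JLV}.
\item For a good pair, $G[V_i\sm U,V_j\sm U]$ has a unique big component (at least $k$ vertices on each side) whenever $U$ meets each part in at most $4k$ vertices.
\item \cref{thm:connector_partition} deletes at most $4k$ ``bad'' vertices per part, so that every remaining vertex of $V_i$ lies in at least three quarters of the big components of $G[V_i\sm U_0,V_j\sm U_0]$, $j\in N_{G_0}(i)$. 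Hence any two vertices of $V_i$ share at least $d$ of them, and no union bound over vertex--part pairs is ever needed.
\item The at most $0.3n$ deleted vertices are restored by $0$-extensions using $\iso(G)\ge\kappa np$ (\cref{lemma:absorption}).
\end{enumerate}

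Your final $0$-extension step is in this spirit, but it cannot repair the failure of the main construction.
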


Note that for, e.g., $p\ge 2\log{n}/n$,
\Cref{thm:gnp} implies that
$G(n,p)$ is \whp{} $d$-rigid
for $d=\floor{cnp}$, where $c>0$ is a {\em universal} constant. In fact, we may choose $c=1/4000$ in this case (see proof of \cref{thm:gnp} for details).

We say that a set $U\subseteq V$ is a \emph{$d$-rigid component} of $G$ if $G[U]$ is $d$-rigid but, for all $U'\supsetneq U$, $G[U']$ is not $d$-rigid.
A classical result on random graphs states that $p=1/n$ is the threshold for the appearance of a ``giant'' (that is, linearly sized) connected component in a random binomial graph $G(n,p)$ (see, for example,~\cite{bollobas2001book}). It is natural to consider the high-dimensional geometric version of this question; that is, to consider the threshold for the appearance of a giant $d$-rigid component.
Lew, Nevo, Peled and Raz~\cite{LNPR23} conjectured that the phase transition occurs exactly when the average degree of the $(d+1)$-core of the graph exceeds $2d$, which happens at $p\sim d/n$. The case $d=2$ was proved, earlier, by Kasiviswanathan, Moore and Theran in~\cite{kasiviswanathan2011rigidity}. For general $d$, we proved in~\cite{KLM25} that for every $\eps>0$ there is a constant $C>0$ such that, for every $d\ge 2$, the graph $G(n,Cd \log{d}/n)$ contains \whp{} a $d$-rigid component of size at least $(1-\eps)n$. Here, we improve this result, obtaining the following condition, which coincides, up to a multiplicative constant, with the conjectural threshold $p\sim d/n$.

\begin{theorem}[Giant rigid component]\label{thm:rigid_component_gnp:2}
Let $100/n\le p\le 2\log{n}/n$, and let $G\sim G(n,p)$. Then, \whp{}, there exists $U\subseteq [n]$ with $|U|\ge (1-e^{-np/2})n$, such that $G[U]$ is $d$-rigid for $d=\lfloor np/251\rfloor$.
\end{theorem}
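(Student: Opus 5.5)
The plan is to take $U$ to be a robust core of $G$ and certify rigidity of $G[U]$ with the paper's announced combinatorial sufficient condition. That condition is the common generalization of vertex splitting and rigid partitions, which I use as a black box here.

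\emph{Step 1: defining $U$ by peeling.} Set $k=\lfloor np/251\rfloor$ and let $U$ be the $K$-core of $G$ for $K$ a suitable constant multiple of $np$, say $K=np/10$. That is, repeatedly delete vertices having fewer than $K$ neighbours among the remaining ones. By standard estimates, $\pr(\Bin(n-1,p)<np/10)\le e^{-c'np}$ with $c'$ close to $1-\log(10e)/10>1/2$. In the sparse regime $np\ge 100$, the peeling process removes whp only $(1+o(1))$ times the number of initially low-degree vertices. This follows from the usual argument: any set $S$ of size at most $\eps n$ spans fewer than $|S|K/2$ edges whp. The expected number of low-degree vertices is at most $ne^{-c'np}$, and it concentrates. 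Together these give $|U|\ge(1-e^{-np/2})n$ whp. The constants $100$ and $251$ leave room for these Chernoff losses.

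\emph{Step 2: pseudorandom properties of $G[U]$.} Record properties of $G$ that hold whp:
\begin{enumerate}[label=(\roman*)]
\item every set $S$ with $|S|\le \alpha n$ spans at most $C|S|$ edges, or at most $|S|np/1000$ when $np$ is large;
\item every pair of disjoint sets of size at least $\beta n$ has an edge between them, with $\beta\approx \log(np)/(np)$;
\item every vertex of $U$ has at least $K$ neighbours in $U$.
\end{enumerate}

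\emph{Step 3: a rigid partition.} Using (i)--(iii), build a partition of $U$ into $k$ parts $V_1,\dots,V_k$ by assigning vertices at random. Each part $G[V_i]$ is then connected, hence $1$-rigid, and has minimum degree about $K/k\ge 25$ inside its part. For each pair $i\ne j$, the bipartite graph between $V_i$ and $V_j$ must satisfy the required condition (an edge-disjoint ``spanning'' structure between parts). Vertices for which the random assignment fails are few; they are handled by the vertex-splitting and degree-$k$-addition part of the general criterion. Adding a vertex with $k$ neighbours to a $k$-rigid graph keeps it $k$-rigid, so a peeling order of bad vertices, each having at least $k$ back-neighbours, finishes the argument.

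\emph{Main obstacle.} The hardest step is Step 3 in the sparse end $np=\Theta(1)$. There, internal connectivity of each part with only constant expected degree inside parts is delicate. I would use an expansion argument inside $U$: sets of size up to $\alpha n$ expand by (i) and (iii), while large sets connect by (ii). The remaining small components in each part are reassigned greedily to other parts, and the leftovers are absorbed by vertex additions.
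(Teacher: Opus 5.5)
\paragraph{There is a genuine gap, and it is in Step 3, not at the sparse end.}
You split $U$ into exactly $k=\lfloor np/251\rfloor$ parts and aim for $k$-rigidity, so the number of parts equals the dimension.

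With $m=d$, the criterion of \cref{thm:strong_rigid_partition_1} (and likewise \cref{thm:strong_rigid_partition_2}) needs $d$ indices out of only $d$. So every pair $u,v\in V_i$ must be connected in \emph{all} of the graphs $G[V_i,V_j]$, $j\in[d]$. In particular, every surviving vertex needs a neighbour in every one of the $d$ parts.

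Under a random assignment, a vertex of degree about $np$ sends about $np/k\approx 251$ edges to each part. It misses a given part with probability about $e^{-251}$, so the expected number of parts it misses is about $(np/251)e^{-251}$. This tends to infinity when $np\to\infty$, e.g.\ for $p=2\log n/n$, which is inside the range of the theorem. So your claim that the vertices where the assignment fails are few is false: for large $np$, almost every vertex fails.

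The $0$-extension clean-up cannot repair this. Growing a rigid set by vertex additions needs a rigid seed that already contains a constant fraction of the vertices; \cref{lemma:absorption} needs half, plus edge expansion. This is exactly the coupon-collector loss that limited earlier $d$- and $(d+1)$-part arguments to $d=\Theta(np/\log(np))$.

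Two smaller points. Your claims that each $G[V_i]$ is connected with internal minimum degree about $25$ also fail for a random assignment: a vertex with only $K$ neighbours has none in its own part with probability about $e^{-25}$. Step 1 (the core) is plausible but beside the point.

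\paragraph{The missing idea.}
Use more parts than the dimension, and exploit the freedom in \cref{thm:strong_rigid_partition_1} that only $d$ of the $m$ indices are needed per pair. The paper does this via \cref{thm:rigid_component}, i.e.\ \cref{thm:connector_partition} with $\eta=1$.
\begin{enumerate}
\item Take an arbitrary equipartition of $[n]$ into $m=\lceil \alpha np\rceil$ parts, with $\alpha=1/(32\log 50)$.
\item A union bound shows that whp, for \emph{every} pair of parts, any $s$-subset of $V_i$ and any $s$-subset of $V_j$ are joined by an edge, where $s=\lceil 2\log(50)/p\rceil$.
\item This gives each $G[V_i\sm U,V_j\sm U]$ a unique big component, which persists under deleting up to $4s$ vertices per part.
\item A maximal-deletion argument removes at most $4s$ vertices per part so that every survivor lies in the big component for at least $3/4$ of the other parts.
\item Hence every pair in $V_i$ lies together in at least $\lceil (m-1)/2\rceil\ge\lfloor m/2\rfloor$ of these components, and the reduced graph stays complete.
\item This yields a $\lfloor m/2\rfloor$-rigid set on at least $n/2$ vertices.
\item The bound $(1-e^{-np/2})n$ then comes from \cref{lemma:absorption}, using $\iso(G;k_0)\ge np/30$ with $k_0=\lceil ne^{-np/2}\rceil$ from \cref{lemma:gnp_expansion_general}.
\end{enumerate}
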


Let $n,r\ge 1$ be such that $nr$ is even. Let $G_{n,r}$ be the uniform distribution on $r$-regular graphs on vertex set $[n]$. It is conjectured (see, for example,~\cite{KLM25}) that for every $d\ge 2$ and $r\ge 2d$, a graph $G\sim G_{n,r}$ is \whp{} $d$-rigid. This was proved in the special case $d=2$ by Jackson, Servatius, and Servatius in~\cite[Thm. 4.2]{jackson2007rigidity}. For $d\ge 3$, the question remains open. Since, for $r\ge 3$, a random $r$-regular graph is known to be $r$-connected \whp{}~\cites{bollobas2001book,wormald1981connectivity}, Vill\'anyi's results from~\cite{Vil23+} imply that, for fixed $d$ and $r\ge d(d+1)/2$, $G\sim G_{n,r}$ is \whp{} $d$-rigid. In~\cite{KLM25}, we showed that there exists a constant $C>0$ such that every fixed $d$ and $r\ge C d \log{d}$, $G\sim G_{n,r}$ is \whp{} $d$-rigid. Here, we obtain the following improved result.

\begin{theorem}[Random regular graphs]\label{thm:random_regular}
Let $d\ge 2$ be fixed, and let $r\ge 501d$. Let $G\sim G_{n,r}$. Then, \whp{}, $G$ is $d$-rigid.
\end{theorem}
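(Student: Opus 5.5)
The approach is to go through the paper's main combinatorial tool, the common generalization of Whiteley's vertex-splitting lemmas and the ``rigid partitions'' method. I will use it in the following form. Suppose $V$ can be partitioned into $d$ sets $V_1,\dots,V_d$ such that each $G[V_i]$ is connected. Suppose further that for every $i\ne j$ the bipartite graph $G[V_i,V_j]$ has enough edges, distributed well enough to ``glue'' the trees of $G[V_i]$ and $G[V_j]$. Then $G$ is $d$-rigid. The cleanest instance is the rigid-partition condition: each $G[V_i]$ is connected and every pair $V_i,V_j$ spans an edge. The refined version should allow each $V_i$ to be replaced by a connected ``cluster'' structure, which is more robust for sparse graphs. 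Since $d$ is fixed and $r\ge 501d$, the plan is to find such a partition in $G\sim G_{n,r}$ \whp{}.

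\textbf{Step 1: pseudorandom properties.} I will work in the configuration model, since $r$ is fixed and results transfer by contiguity (the simple-graph probability is bounded below). The properties I will establish \whp{} are:
\begin{itemize}
\item every set $S$ with $|S|\le \eta n$ spans fewer than $(1+\delta)|S|$ edges;
\item edge expansion holds: $e(S,V\sm S)\ge c\,r|S|$ for $|S|\le n/2$;
\item between any two disjoint sets of size at least $\alpha n$ there are $\Theta(r\alpha^2 n)$ edges, by the expander mixing lemma using Friedman's bound $\lambda\le 2\sqrt{r-1}+o(1)$.
\end{itemize}

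\textbf{Step 2: random partition.} I will color the vertices uniformly at random with $d$ colors. Each vertex has about $r/d\ge 501$ neighbors of its own color in expectation, so most vertices have many same-colored neighbors. However, some vertices will have too few, by Chernoff with constant failure probability $e^{-\Theta(r/d)}$. I will recolor these ``bad'' vertices greedily. Since $r\ge 501d$, a vertex always has some color class containing at least $r/d$ of its neighbors, and the sparse-set property controls the cascade of recolorings, as in standard core-type arguments. I will then show each $G[V_i]$ is connected. Each $G[V_i]$ has minimum degree on the order of $r/d$ and inherits expansion from the mixing lemma, so any component has linear size. Two linear-size components inside $V_i$ would have an edge between them, which gives connectivity. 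Finally, every pair $V_i,V_j$ has linear-size parts with many edges between them, so the gluing condition holds.

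\textbf{Main obstacle.} The hardest part is the connectivity of each $G[V_i]$ down to the last vertex. Random partitions produce small structures, such as short paths or small trees hanging off the main part, whose neighborhoods are dominated by other colors. This is exactly why a plain rigid partition is too weak, and why the generalized vertex-splitting condition is needed. Small attachment pieces can be handled by splitting, each vertex needing only $d$ neighbors in suitable positions. My first approach is to take each $V_i$ as a high-degree ``core'' obtained by iterative peeling. I will then show that the leftover vertices, which form a set of size $e^{-\Omega(r/d)}n$ inducing a forest-like graph by the sparse-set property, can be added one at a time via $d$-dimensional $0$-extensions or vertex splits. This is possible because every vertex has $r\ge 501d$ neighbors, most of them in the core. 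The constant $501$ would then arise from balancing the Chernoff and mixing-lemma losses.
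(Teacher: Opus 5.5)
\textbf{There is a genuine gap at the foundation: the rigidity criterion you build on is false.} A partition $V_1,\dots,V_d$ in which every $G[V_i]$ is connected and every two parts are joined by an edge does not force $d$-rigidity.

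\emph{Counterexample.} For $d=2$, two paths joined by one edge form a tree, which is not $2$-rigid. In general such a graph can have as few as $n-d+\binom{d}{2}$ edges, while a $d$-rigid graph on $n>d$ vertices needs at least $dn-\binom{d+1}{2}$.

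\emph{What the correct criterion requires.} The $m=d$ case of \cref{thm:strong_rigid_partition_1} (the type-I strong rigid partition) needs every pair $u,v\in V_i$ to be connected in $G[V_i,V_j]$ for \emph{every} $j\in[d]$. That is, each $G[V_i]$ and each bipartite graph $G[V_i,V_j]$ must be connected and spanning. In particular, every vertex must have a neighbour in every class.

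\emph{Why your Step 2 does not deliver this.} With $r$ fixed, a uniformly random $d$-colouring leaves each vertex missing some class with probability roughly $d(1-1/d)^r$. This is a positive constant, so linearly many vertices violate the condition. Your recolouring, which targets only same-colour degree, does not repair this. So the obstacle you single out, connectivity of the $G[V_i]$, is not the real one. The real one is simultaneous spanning connectivity of all $\binom{d}{2}$ bipartite graphs, with exactly $d$ parts and no slack. That is never addressed, and the ``refined cluster version'' is never actually formulated.

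\textbf{How the paper obtains slack: far more than $d$ parts.}
\begin{enumerate}
\item It takes an equipartition into $m=\lceil r/(64\log 50)\rceil$ parts.
\item Using \cref{lemma:gnr_connector} and a union bound, it shows that \whp{} any two $k$-subsets of distinct parts span an edge, where $k=\Theta(n/r)$. Hence each $G[V_i\sm U,V_j\sm U]$ has a unique giant component.
\item It deletes at most $4k$ ``bad'' vertices per part. Afterwards every surviving pair in $V_i$ shares a giant component for at least $\lfloor m/2\rfloor\ge d$ indices $j$.
\item \Cref{thm:strong_rigid_partition_1}, with reduced graph $K_m$, then yields a $d$-rigid $G[W]$ with $|W|\ge n/2$ (\cref{thm:connector_partition,thm:rigid_component}). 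The constant $501$ comes from $128\log 50\approx 500.7$.
\item Only then are the remaining vertices absorbed by $0$-extensions, via Bollob\'as's bound $\iso(G)\ge 3r/8\ge d$ and \cref{lemma:absorption}.
\end{enumerate}

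Your final step of adding leftover vertices one at a time by $0$-extensions is sound and plays the same role as this absorption. However, it needs a genuinely $d$-rigid core to start from, and your argument does not supply one.
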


Note that the condition on the degree $r$ in \cref{thm:random_regular} matches the conjectural bound $r\ge 2d$ up to the multiplicative constant.

~

We now turn from random graph models to the extremal problem of establishing
sufficient minimum codegree conditions for rigidity.
As we mentioned, minimum degree conditions for rigidity
have been studied in~\cites{KLM25,krivelevich2024minimum,jordan2016rigidity}.
In~\cite{KLM25}, we showed that $\delta(G)\ge n/2+d-1$ implies $d$-rigidity for $d=O(\sqrt{n}/\log{n})$
(which is sharp up to a multiplicative constant)
and conjectured that this should hold for all $d\le n/2$.
In~\cite{krivelevich2024minimum}, we extended
the regime of $d$ for which the result holds to $O(n/{\log^2}{n})$, and proved a sharp result,
namely, that $\delta(G)\ge (n+d)/2-1$ implies $d$-rigidity,
in the regime $d=O(\sqrt{n})$. Later, Jord\'an, Liu, and Vill\'anyi~\cite{jordan2025degree} studied this problem,
extended the first result to all values of $d$ (see \cref{thm:JLV}),
and the sharp result to $d\le n/29$.

For a graph $G=(V,E)$ and $v\in V$, let $N_G(v)$ be the set of neighbours of $v$ in $G$.
For distinct $u,v\in V$, write $d_2(u,v)=|N_G(u)\cap N_G(v)|$,
and let $\delta_2(G)$ be the \defn{minimum codegree} of $G$,
defined as the minimum of $d_2(u,v)$, taken over all $u\ne v$ in $V$. Note that, if $\delta(G)\ge (n+x)/2$, then $\delta_2(G)\ge x$. This motivates the problem of proving minimum codegree sufficient conditions for rigidity.
As part of the proof of the minimum degree condition in~\cite{KLM25}, we showed that if $\delta_2(G)\ge k$ for $k\ge 3\log{n}$, then $G$ is $d$-rigid for $d=\lfloor k/(3\log{n})\rfloor$ (\cite[Theorem 5.1]{KLM25}). Here, we obtain the following sharper result.

\begin{theorem}[Minimum codegree]\label{thm:codegree}
  There exists $n_0>0$
  such that for all $n\ge n_0$ and $k\ge 65\log{n}$,
  the following holds.
  Let $G$ be an $n$-vertex graph with $\delta_2(G)\ge k$.
  Then, $G$ is $\floor{k/40}$-rigid.
\end{theorem}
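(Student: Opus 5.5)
The plan is to combine the ``rigid partitions'' approach with the new combinatorial sufficient condition the abstract announces (the common generalization of Whiteley's vertex-splitting lemmas and rigid partitions). I expect that condition to read roughly as follows. Suppose $V$ is partitioned into $d$ sets $V_1,\dots,V_d$, each $G[V_i]$ is connected, and for every $i\ne j$ the bipartite graph $G[V_i,V_j]$ is sufficiently well connected to the rest of the structure (for instance, each pair of parts is joined by edges forming a suitable ``spanning'' configuration). Then $G$ is $d$-rigid. The task for the codegree statement is therefore to produce such a partition into $d=\lfloor k/40\rfloor$ parts.

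I would choose the partition at random: assign each vertex independently and uniformly to one of $d$ classes. Fix a pair $u\ne v$. They have at least $k$ common neighbours, so the number of common neighbours landing in a fixed class $V_i$ is binomial with mean at least $k/d\ge 40$. A Chernoff bound then shows this count is positive with probability at least $1-e^{-\Omega(k/d)}$.

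Only having a constant mean is too weak for a union bound over $n^2 d$ events. So I would instead target the statement ``every pair $u,v$ has a common neighbour in every class.'' The failure probability for a single pair and class is $(1-1/d)^{k}\le e^{-k/d}$. For this to beat $n^2 d$, we would need $k/d\gtrsim 3\log n$, which is exactly the old bound. To reach $d=k/40$, I must use only local conditions whose failure probabilities can be handled more cleverly.

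The key step is to pick a partition condition requiring only constant-size witnesses, and verify it with the Lovász Local Lemma or a nibble-type argument rather than a union bound. Concretely, the hypothesis $k\ge 65\log n$ should be used only to guarantee a few global properties: every vertex has degree at least $k$, and every class has size about $n/d$. The remaining requirements, namely that each $G[V_i]$ is connected and that the cross connections between classes are good, should then follow from the codegree condition. The argument goes as follows.
\begin{itemize}
\item Any two vertices $u,v$ in the same class $V_i$ have many common neighbours overall. Each common neighbour lands in $V_i$ with probability $1/d$, so the expected number of length-two paths from $u$ to $v$ inside $V_i$ is at least $k/d$.
\item Connectivity of $G[V_i]$ then requires only that no small set in $V_i$ is separated. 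This can be checked by an expansion argument: a vertex $u$ has at least $k/d\ge 40$ neighbours in its own class in expectation, with exponential tail $e^{-ck/d}$.
\end{itemize}
I expect the number of ``bad'' vertices to be small, and these can be repaired by reassigning them greedily, using the codegree condition, to a class where they have many neighbours. The same greedy reassignment handles the cross-class requirement.

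The main obstacle I anticipate is this repair step. We must show that, after reassigning the bad vertices, every class is still connected and every pair of classes still satisfies the cross condition of the new sufficient criterion. I would first try to prove that bad vertices are sparse, say that each vertex has at most a few bad neighbours, with probability $1-o(1)$. The lower bound $k\ge 65\log n$ is what makes this union bound over vertices work.

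The constant $40$ would come out of balancing two quantities: the Chernoff exponent for ``$u$ has at least $2$ neighbours in each class'' against the $\log n$ term. I would not optimize it.
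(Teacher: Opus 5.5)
Your proposal has a genuine gap. The step that is supposed to carry the argument (Local Lemma or nibble, then greedy repair of bad vertices) is never carried out, and the tools you name do not apply here.

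The criterion you anticipate, with exactly $d$ parts, is the case $m=d$ of \cref{thm:strong_rigid_partition_1}. It forces every vertex of $V_i$ to have a neighbour in every other part. As you compute yourself, with $d=\Theta(k)$ parts each such requirement fails with probability $e^{-\Theta(k/d)}=e^{-\Theta(1)}$. Vertices of degree about $k$ do occur under $\delta_2(G)\ge k$: take a clique $A$ of size $k+1$ completely joined to an independent set $B$. So a random partition has many defects.

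The Local Lemma does not rescue this. The event that $u$ misses a part is determined by the parts assigned to $N(u)$. Since $\delta_2(G)\ge 1$, any two neighbourhoods intersect, so every such event shares variables with every other one, and the dependency graph is complete. Similarly, moving a vertex during repair changes the status of all its neighbours, and you give no reason the process ends with every cross condition intact. Also, $k\ge 65\log n$ is not what gives $\deg(u)\ge k$ or balanced class sizes; both are deterministic.

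The missing idea is to build slack into the partition. Use $m$ parts with $m$ a constant factor larger than $d$, and use the full strength of \cref{thm:strong_rigid_partition_1}. That theorem only needs the reduced graph to be $d$-rigid, and each pair $u,v\in V_i$ to be connected in $G[V_i,V_j]$ for \emph{some} $d$ of the $m$ indices $j$.

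The paper takes a uniform random equipartition into $m=\lfloor k/13\rfloor$ parts. It asks only that every pair $u,v$ have a common neighbour in at least $(1-\eps)m$ parts, with $\eps=1/8$.
\begin{itemize}
\item For a fixed set $S$ of $\lfloor \eps m\rfloor$ parts, $V_S$ has at least $\eps n/2$ vertices. So all $\ge k$ common neighbours avoid $V_S$ with probability at most $(1-\eps/2)^k\le e^{-\eps k/2}$.
\item There are $\binom{m}{\lfloor\eps m\rfloor}\le e^{\eps m\log(3/\eps)}\le e^{\eps k/4}$ choices of $S$. Summing gives failure probability $e^{-k/32}$ per pair, which is exponentially small in $k$ rather than in $k/d$.
\item A plain union bound over the $\binom{n}{2}$ pairs then succeeds because $k\ge 65\log n$.
\end{itemize}
The reduced graph then has minimum degree at least $7m/8-2\ge m/2+d-1$, so it is $d$-rigid by \cref{thm:JLV}. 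Any $u,v\in V_i$ are joined by a path $u\,w\,v$ in $G[V_i,V_j]$ for at least $7m/8>d$ indices $j$, so \cref{thm:strong_rigid_partition_1} applies.

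No connectivity of the parts, no Local Lemma and no repair step are needed. The constants come from $3/(8\cdot 13)>1/40$ and $65>8/\eps$.
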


This result is sharp up to a multiplicative constant.
Indeed,
let $k\ge 1$ and $n\ge k+4$ be such that $n+k$ is even,
and consider a graph $G$ composed of two cliques,
each of size $(n+k)/2$, intersecting in $k$ vertices. Note that $G$ is an $n$-vertex graph with minimum codegree $k$. Moreover, it is easy to check that $G$ is $k$-rigid but is not $(k+1)$-rigid.

Our proofs combine combinatorial and probabilistic arguments. Our main technical tool is the following sufficient condition for the existence of a large $d$-rigid component in a graph. For a graph $G=(V,E)$ and two disjoint sets $X,Y\subseteq V$, we denote $E_G(X,Y)=E(X,Y)=\{e\in E:\, |e\cap A|=|e\cap B|=1\}$. Recall that $\delta(G)$ denotes the minimum degree of $G$. For an integer $m$, let $[m]=\{1,2,\ldots,m\}$, and let $\binom{[m]}{2}=\{\{i,j\}:\, 1\le i<j\le m\}$. For a finite set $V$, we say that $V_1,\ldots,V_m$ is a \emph{partition} of $V$ if $V=V_1\cup \cdots \cup V_m$, $V_i\cap V_j=\es$ for all $1\le i<j\le m$, and $V_i\ne \es$ for all $1\le i\le m$.

\begin{theorem}\label{thm:connector_partition}
    Let $G=(V,E)$ be a graph, let $k,m$ be positive integers, and let $1/2<\eta\le 1$. Let $V_1,\ldots, V_m$ be a partition of $V$ satisfying $|V_i|>7k-3$ for all $1\le i\le m$. Let $G_0=([m],E')$ be the graph defined by
    \[
        E'= \left\{ \{i,j\}\in\binom{[m]}{2} :\, \text{for every $X\subseteq V_i$, $Y\subseteq V_j$ such that $|X|=|Y|= k$, $E_G(X,Y)\ne \es$} \right\}.
    \]
    Assume that $\delta(G_0)\ge \eta\cdot m-1$.
Then, there exists $W\subseteq V$ such that $G[W]$ is $d$-rigid for $d=\lfloor (\eta-1/2)m\rfloor$, and  $|V_i\sm W|\le 4k$ for all $1\le i\le m$.
\end{theorem}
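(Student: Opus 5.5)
Throughout, $d=\lfloor(\eta-1/2)m\rfloor$. The plan has two stages. First, inside each part $V_i$ we extract a large core $W_i$ in which any two vertices are joined by many disjoint paths, and which sends many edges to each core $W_j$ with $\{i,j\}\in E'$. Second, we assemble a $d$-rigid graph on $W=\bigcup_i W_i$ by combining a rigid-partition argument with Whiteley-type vertex splitting (the common generalization mentioned in the abstract). The density hypothesis $\delta(G_0)\ge \eta m-1$ with $\eta>1/2$ is what lets the rigid-partition step work: any two indices $i,j$ have at least $(2\eta-1)m-2\ge 2d-2$ common neighbours in $G_0$, which is roughly the codegree that produces $d$-rigidity.

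\textbf{Step 1: cleaning.} For each edge $\{i,j\}$ of $G_0$, the $k$-set condition says that $G[V_i,V_j]$ has no pair of $k$-sets with no edges between them. A standard greedy deletion then finds subsets missing at most $2k$ vertices in which every vertex of $V_i$ has at least $|V_j|-(2k-1)$ non-neighbours removed appropriately. More usefully, every $X\subseteq V_i$ with $|X|\ge k$ has fewer than $k$ non-neighbours in $V_j$. I would iteratively delete from $V_i$ the vertices with few neighbours towards $V_j$, for the relevant $j$. The aim is a set $W_i$ with $|V_i\setminus W_i|\le 4k$ such that, for every $j\sim_{G_0} i$, each vertex of $W_i$ has at least $|W_j|-4k$ neighbours in $W_j$, or at least a linear number of them. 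The size bound $|V_i|>7k-3$ should guarantee that $W_i$ is nonempty with room to spare, namely $|W_i|\ge 3k-2$.

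\textbf{Step 2: rigidity.} I would prove by induction that $G[W]$ is $d$-rigid. The idea is to realize $W$ as obtained from a rigid graph on $m$ "super-vertices" via vertex splittings. Start with $G_0$ itself, which is $d$-rigid by the codegree criterion, since any two adjacent vertices have at least $2d-2$ common neighbours. Whiteley's vertex splitting replaces a vertex by an edge, keeping $d-1$ shared neighbours; more generally, we replace each $i$ by the set $W_i$. Each time we add a vertex $w\in W_i$ to the already rigid structure, it must be joined to at least $d$ already-placed vertices in general position, using the edges from Step 1 to the cores $W_j$ with $j\sim i$. Concretely, I would use the rigid-partition lemma in the following form: if $W=W_1\cup\dots\cup W_m$ with each $G[W_i]$ replaced by an appropriate connectivity structure, and the bipartite graphs between adjacent parts are rich enough that the "quotient" is $d$-rigid, then $G[W]$ is $d$-rigid.

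\textbf{Main obstacle.} The difficulty is that the parts $W_i$ need not be internally connected, since $G[V_i]$ may have no edges at all. Rigidity must therefore come entirely from the cross edges: each vertex $w\in W_i$ is attached via its many neighbours in the various $W_j$. Proving that $w$ can be added without creating a flex requires a vertex-splitting argument in which $w$ plays the role of a copy of $i$. This means matching the $d$ neighbours of $i$ in $G_0$ with actual neighbours of $w$, and the Step 1 bounds should make that possible by a Hall-type selection. Making this splitting lemma precise is where I expect the real work.
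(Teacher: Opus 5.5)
There is a genuine gap. The property you aim for in Step 1 is not attainable, and it is not the property the rigidity step needs. The ``splitting lemma'' you defer is exactly where the missing idea lies.

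The hypothesis $\{i,j\}\in E'$ controls neighbourhoods of $k$-sets only, not individual degrees. Take the paper's own application to $G(n,p)$ with $p=f/n$, $f=o(n)$, $m\approx\alpha f$, $k\approx\beta n/f$. There a typical vertex has about $1/\alpha=O(1)$ neighbours in each $V_j$, whereas $|W_j|-4k\ge 3k-2=\Theta(n/f)\to\infty$. So almost every vertex violates your degree condition, or any condition linear in $|W_j|$, and deleting $4k$ vertices per part cannot fix that.

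Degrees are also the wrong currency. In the sparse range (e.g.\ $f\le 2\log n$, as in \cref{thm:rigid_component_gnp:2}), two vertices of $V_i$ typically have no common neighbour at all. A Hall-type choice of neighbours of $w$ in $d$ distinct parts would let you split $w$ off the super-vertex $i$ while the other parts are still contracted. But once a part $W_j$ is itself split, the neighbours of $w$ and of the rest of $W_i$ inside $W_j$ are different vertices, and nothing forces them to move together. What does force this is connectivity: any $u,v\in W_i$ must lie in a common component of $G[W_i,W_j]$ for at least $d$ indices $j$. This is precisely the hypothesis of \cref{thm:strong_rigid_partition_1}, the path-based generalization of Whiteley's common-neighbour condition.

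Separately, $G_0$ is not $d$-rigid ``by the codegree criterion''. \cref{thm:codegree} gives only $\lfloor\delta_2(G_0)/40\rfloor$-rigidity, and only when $\delta_2(G_0)\ge 65\log m$. That codegree about $2d$ forces $d$-rigidity is not known; even a constant-factor version is posed as open in the paper. The right tool is \cref{thm:JLV}: since $d\le(\eta-1/2)m$, we get $\delta(G_0)\ge\eta m-1\ge m/2+d-1$.

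Accordingly, the paper's cleaning is about components, not degrees. The steps are as follows.

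\begin{enumerate}
\item Using $|V_i|>7k-3$: for $\{i,j\}\in E'$ and any $U$ containing at most $s=4k$ vertices of each of $V_i,V_j$, the graph $G[V_i\sm U,V_j\sm U]$ has a unique component $C_{ij}(U)$ with at least $k$ vertices on each side. Moreover, $C_{ij}(U')\subseteq C_{ij}(U)$ whenever $U\subseteq U'$.
\item Call $v\in V_i\sm U$ bad if it lies in fewer than $\tfrac34\deg_{G_0}(i)$ of the components $C_{ij}(U)$. Badness persists as $U$ grows.
\item Take $U_0$ maximal among sets with at most $4k$ vertices in each part, each of whose elements is bad for $U_0$ minus itself.
\item Suppose some $v\in V_i\sm U_0$ were bad. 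Maximality forces $|U_0\cap V_i|=4k$. Double counting then gives an index $j$ and at least $k$ vertices of $U_0\cap V_i$ with no edge to the at least $k$ vertices of $C_{ij}(U_0)\cap V_j$, contradicting $\{i,j\}\in E'$.
\item Hence any two survivors in $V_i$ share at least $\tfrac12\deg_{G_0}(i)$ big components, so by integrality at least $\lceil(\eta m-1)/2\rceil\ge d$ of them.
\item The reduced graph of $G[V\sm U_0]$ contains $G_0$, and \cref{thm:strong_rigid_partition_1} finishes.
\end{enumerate}

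Your proposal lacks this connectivity invariant. It also lacks any mechanism, like the maximality and double-counting argument, for controlling cascading deletions so that each part loses at most $4k$ vertices.
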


The proof of \cref{thm:connector_partition} relies (in the case $\eta<1$) on a minimum degree condition for rigidity recently proved by Jord\'an, Liu, and Vill\'anyi~\cite{jordan2025degree} (see \cref{thm:JLV}), as well as on the following new combinatorial sufficient condition for $d$-rigidity, which we expect to be of independent interest.

Let $G=(V,E)$ be a graph. For $A,B\subseteq V$, let $G[A,B]$ be the graph on vertex set $A\cup B$ with edges $\{e\in E: e\cap A\ne \es, \, e\cap B\ne \es, \, e\subseteq A\cup B\}$. In particular, $G[A,A]=G[A]$, the subgraph of $G$ induced by $A$, and if $A$ and $B$ are disjoint then $G[A,B]$ is a bipartite graph whose edge set consists of all the edges of $G$ with one endpoint in $A$ and the other in $B$.

Let $V_1,\ldots, V_m$ be a partition of $V$. We define the \emph{reduced graph} of $G$ associated to this partition as $G'=([m],E')$, where
\[
    E' = \left\{\{i,j\}\in\binom{[m]}{2} :\, E_G(V_i,V_j)\ne \es\right\}.
\]

\begin{theorem}\label{thm:strong_rigid_partition_1}
   Let $d\ge 1$ and $m\ge d$. Let $G=(V,E)$ be a graph, and let $V_1,\ldots, V_m$ be a partition of $V$.
   Assume that the reduced graph $G'$ is $d$-rigid,
   and that for every $1\le i\le m$ and every pair of distinct vertices $u,v\in V_i$, there are at least $d$ distinct indices $j\in[m]$ such that $u$ and $v$ lie in the same connected component of $G[V_i,V_j]$. Then, $G$ is $d$-rigid.
\end{theorem}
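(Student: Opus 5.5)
We will work with infinitesimal rigidity. Fix a generic $\p:V\to\RR^d$ and show that every infinitesimal motion $\bq:V\to\RR^d$ of $(G,\p)$ is trivial. Since $G'$ is $d$-rigid and $m\ge d$, we may also assume $m\ge d+1$ where needed. The case $m=d$ with $G'$ complete is treated with the same argument, with the dimension count adjusted.

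The strategy is to choose $\p$ with special structure and then appeal to lower semicontinuity of the rank of the rigidity matrix. A natural choice is to place all vertices of $V_i$ very close to a common point $x_i$, where $(x_1,\dots,x_m)$ is a generic configuration for $G'$. The cleaner formulation is a degenerate configuration followed by perturbation. Put $\p_0(v)=x_i$ for $v\in V_i$ and let $\p_t=\p_0+t\,\bq_0$ with $\bq_0$ generic. The rank of the rigidity matrix at $\p_t$ for small $t>0$ equals the generic rank. Edges inside a part degenerate at $t=0$, but after dividing their rows by $t$ they become the rows $(\bq_0(u)-\bq_0(v))$, i.e. a generic framework inside each part. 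It therefore suffices to show that the limiting row space has dimension $d|V|-\binom{d+1}{2}$. This is the same device as in the ``rigid partitions'' and vertex-splitting arguments.

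Concretely, we show that any motion $\bq$ lying in the kernel of the limiting system is trivial. First, consider the edges between $V_i$ and $V_j$ at $t=0$. Their rows give $\langle x_i-x_j,\bq(u)-\bq(v)\rangle=0$ for $uv\in E$ with $u\in V_i$, $v\in V_j$. Taking $u,v\in V_i$ in the same component of $G[V_i,V_j]$ and walking along a path that alternates between $V_i$ and $V_j$, we get $\langle x_i-x_j,\bq(u)-\bq(v)\rangle=0$. By hypothesis this holds for at least $d$ indices $j$. Since the $x$'s are generic, the vectors $x_i-x_j$ for $d$ distinct $j\neq i$ are linearly independent. Hence $\bq(u)=\bq(v)$, so $\bq$ is constant on each part; write $\bq|_{V_i}=y_i$. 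The remaining constraints say $\langle x_i-x_j,y_i-y_j\rangle=0$ for $ij\in E(G')$, so $y$ is an infinitesimal motion of $(G',x)$. Since $G'$ is $d$-rigid and $x$ is generic, $y$ is trivial. This establishes rigidity of the limiting system to first order.

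The main obstacle is making the limit argument rigorous, because the kernel at $t=0$ is not directly the kernel for small $t$. The motions found above are constant on parts, and these cover only the zeroth-order analysis. We must also account for the next-order terms: the rescaled intra-part edge rows and the $O(t)$ corrections to the cross edges. I would handle this by a block-triangular or Schur-complement rank count. The cross-edge constraints at $t=0$ already force $\bq$ to be blockwise constant up to a trivial motion. So the rank of the full rescaled matrix is at least the rank of the cross-edge part, which forces each $\bq|_{V_i}$ into a space of dimension at most $d$, reduced further via $G'$. Combining gives total rank at least $d|V|-\binom{d+1}{2}$.

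If this count falls short, the fallback is induction on $|V|$ in the spirit of Whiteley's vertex splitting. Contract two vertices $u,v\in V_i$ that are adjacent, or that are joined through some $V_j$-path. The contracted graph still satisfies the hypotheses and is rigid by induction. We then perform a vertex split, using the $d$ parts $j$ in which $u,v$ are connected to supply the $d-1$ common neighbours that the splitting lemma needs.
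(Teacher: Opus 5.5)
Your framework matches the paper's: collapse each $V_i$ to a generic point $x_i$, pass to a limit framework, use lower semicontinuity of rank, and reduce to $G'$. The key anchoring step, however, has a gap.

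\textbf{The gap: the index $j=i$.} The hypothesis gives $d$ indices $j\in[m]$, and $j=i$ is allowed; it means $u,v$ are connected inside $G[V_i,V_i]=G[V_i]$. Your sentence ``this holds for at least $d$ indices $j$ \dots\ the vectors $x_i-x_j$ for $d$ distinct $j\neq i$'' silently assumes all $d$ indices differ from $i$. For $j=i$, your telescoping only gives the vacuous $\langle x_i-x_i,\bq(u)-\bq(v)\rangle=0$. Worse, you give every intra-part edge its own generic direction $\bq_0(u)-\bq_0(v)$, so a path in $G[V_i]$ produces no single linear constraint on $\bq(u)-\bq(v)$. As written, you prove only the weaker \cref{prop:strong_type2_rigid_partition}, the paper's warm-up, which ignores edges inside parts.

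This is not a side case. For $m=d$ there are only $d-1$ indices $j\neq i$, so every pair must use $j=i$, and your argument says nothing there; it is not a matter of ``adjusting the dimension count.'' The same case is what makes the theorem contain \cref{lemma:whiteley_splitting_1}, where $x,y$ are adjacent and share only $d-1$ neighbours.

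\textbf{How to fix it.} There are two easy routes.
\begin{itemize}
\item Follow the paper and give all edges of $G[V_i]$ one common direction. For instance, put $\p_t(v)=x_i+t\lambda_v(x_i-x_{m+1})$ with distinct reals $\lambda_v$ and an extra generic point $x_{m+1}$. Then connectivity in $G[V_i]$ telescopes to $\langle x_i-x_{m+1},\bq(u)-\bq(v)\rangle=0$, and the $d$ resulting vectors are independent. This is the limit framework built in the proof of \cref{thm:gen_partitions}.
\item Keep your generic $\bq_0$ and argue in two stages. If $uv\in E(G[V_i])$, then $u,v$ have at least $d-1$ indices $j\neq i$; together with the generic row $\bq_0(u)-\bq_0(v)$ these give $d$ independent constraints, so $\bq(u)=\bq(v)$ and $\bq$ is constant on components of $G[V_i]$. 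Two vertices in different components of $G[V_i]$ cannot use $j=i$, so they have $d$ indices $j\neq i$ and again $\bq(u)=\bq(v)$.
\end{itemize}

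\textbf{Your ``main obstacle'' is not one.} The row-rescaled rigidity matrices converge entrywise to the limit matrix $R_0$, so $\operatorname{rank} R(G,\p_t)\ge \operatorname{rank} R_0$ for small $t>0$. Once $\ker R_0$ is shown to consist of trivial motions of $\p_0$, you get $\operatorname{rank} R_0\ge d|V|-\binom{d+1}{2}$, which finishes the proof (this is \cref{lemma:limit_rigidity}). No next-order or Schur-complement analysis is needed.

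Your vertex-splitting fallback would not work anyway. Whiteley's lemmas require common neighbours, while the hypothesis only provides connecting paths in $G[V_i,V_j]$.
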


\Cref{thm:strong_rigid_partition_1} extends results on the rigidity of graphs admitting ``type-I strong $d$-rigid partitions'' introduced by Lew, Nevo, Peled, and Raz in~\cite{LNPR23+} (corresponding to the special case $m=d$), and ``type-II strong $d$-rigid partitions'', introduced in~\cite{KLM25} (corresponding to the special case when $m=d+1$ and all the edges inside each part $V_i$ are ignored). Moreover, \cref{thm:strong_rigid_partition_1} extends Whiteley's well-known vertex-splitting lemmas~\cites{whiteley90vertex,whiteley96some} (see \cref{lemma:whiteley_splitting_1,lemma:whiteley_splitting_2}),  corresponding to the special case $|V_1|=2$, $|V_i|=1$ for $2\le i\le |V|-1$. Let us note that shortly before the submission of this manuscript, we learned that in recent, yet unpublished work, Csaba Kir\'aly independently obtained, using arguments similar to ours, new proofs of the ``strong rigid partitions'' conditions from~\cites{LNPR23+,KLM25}, as well as to an extended version of them sharing some similarity to \cref{thm:strong_rigid_partition_1}.

In Section \ref{sec:partitions}, we present more general, albeit more technical, versions of \cref{thm:strong_rigid_partition_1} (\cref{thm:gen_partitions,thm:strong_rigid_partition_2}), extending the notion of ``rigid partitions'' introduced in~\cite{KLM25} (originally studied, using a  different terminology, by Crapo~\cite{crapo1990plane} in the $d=2$ case, and by Lindemann~\cite{LinPhD} in the $d=3$ case).

\begin{remark} As we already mentioned, the minimum degree sufficient condition for rigidity obtained by Jord\'an, Liu, and Vill\'anyi in~\cite{jordan2025degree} (see \cref{thm:JLV}) is an important ingredient in our proof of \cref{thm:connector_partition} and, as a consequence, of \cref{thm:gnp}. Indeed, the idea of combining \cref{thm:JLV} with Whiteley's vertex-splitting lemmas, introduced in~\cite{jordan2025degree} in order to study the rigidity of $G(n,p)$ for constant $p$, was a significant motivation for this work.
\end{remark}

The paper is organized as follows.
In \cref{sec:prelims},
we introduce some relevant facts from rigidity theory and probability theory
that we use later.
In \cref{sec:connector_partition},
we prove \cref{thm:connector_partition}.
We establish \cref{thm:gnp} in \cref{sec:binomial},
and prove \cref{thm:rigid_component_gnp:2,thm:random_regular} in \cref{sec:sparse}.
In \cref{sec:codegree} we prove \cref{thm:codegree},
providing a minimum codegree condition for rigidity.
We defer the proof of \cref{thm:strong_rigid_partition_1}
and its generalizations (\cref{thm:gen_partitions,thm:strong_rigid_partition_2})
to \cref{sec:partitions}.
We conclude in \cref{sec:concluding} with some final remarks and open questions.

\paragraph{Notation and terminology}
Let $G=(V,E)$ be a graph.
Denote by $N_G(A)$ the
{\em external} neighbourhood of $A$,
that is, the set of all vertices in $V\sm A$
that have a neighbour in $A$.
The degree of a vertex $v\in V$,
denoted by $\deg_G(v)$,
is its number of incident edges.
In the above notation we often replace $\{v\}$ with $v$ for abbreviation,
and often omit the subscript $G$ if it is clear from the context.
For a set $X$ and a nonnegative integer $k$,
we denote by $\binom{X}{k}$ the family of all $k$-subsets of $X$.
Throughout the paper, all logarithms are in the natural basis.

\section{Preliminaries}\label{sec:prelims}

\subsection{Inductive operations preserving rigidity}

We will need the following basic operation preserving $d$-rigidity of graphs.

\begin{lemma}[$0$-extension; see, for example,~\cite{TW1985}]
\label{lemma:0_extension}
Let $G=(V,E)$ be a $d$-rigid graph, and let $G'$ be obtained from $G$ by adding a new vertex $v$ adjacent to at least $d$ vertices from $V$. Then, $G'$ is $d$-rigid.
\end{lemma}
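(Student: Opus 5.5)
We argue via infinitesimal rigidity and the rigidity matrix. For a generic $\p$, the framework $(G,\p)$ is rigid if and only if the rigidity matrix $R(G,\p)$ has rank $d|V|-\binom{d+1}{2}$ (when $|V|\ge d+1$; for $|V|\le d$ the condition is that $G$ is complete). Equivalently, the space of infinitesimal motions is exactly the $\binom{d+1}{2}$-dimensional space of trivial motions. The plan is therefore to take a $d$-rigid $G$ with generic $\p$ and extend it to a generic $\p'$ on $V\cup\{v\}$. We then show that every infinitesimal motion of $(G',\p')$ is trivial.

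Let $\vect{u}$ be an infinitesimal motion of $(G',\p')$. Its restriction to $V$ is an infinitesimal motion of $(G,\p)$, which is generic, so it is trivial there. Subtracting the trivial motion of $\RR^d$ that agrees with it on $V$, we may assume $\vect{u}(w)=0$ for all $w\in V$.

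Now let $w_1,\ldots,w_k$, with $k\ge d$, be neighbours of $v$. The edge constraints give
\[
(\p'(v)-\p(w_i))\cdot\vect{u}(v)=0\quad\text{for } i=1,\ldots,k.
\]
By genericity of $\p'$, the vectors $\p'(v)-\p(w_1),\ldots,\p'(v)-\p(w_d)$ are linearly independent: their determinant is a nonzero polynomial in the coordinates, for instance nonzero when $\p'(v)=0$ and $\p(w_i)=e_i$. Hence $\vect{u}(v)=0$, so $\vect{u}$ is trivial, and $G'$ is $d$-rigid.

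The degenerate case $|V|\le d$ must be handled separately. Here $G$ rigid means $G$ is complete, and $v$ has at least $d\ge|V|$ neighbours, so $v$ is adjacent to all of $V$ and $G'$ is complete. Complete graphs are $d$-rigid, since in generic position their distances determine the configuration up to isometry.

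The only slightly delicate point is the equivalence between rigidity and infinitesimal rigidity at generic points (Asimow–Roth), which we take as standard. Apart from that, the proof is a rank count, and the genericity argument for linear independence is routine.
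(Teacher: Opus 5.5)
Your proof is correct. The paper does not prove this lemma itself (it only cites Tay--Whiteley), and your infinitesimal-motion argument is the standard proof: it is the dual of the usual rank count, in which the $d$ new rows of $R(G',\p')$ are linearly independent on the $d$ columns of $v$, so the rank increases by exactly $d$.

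Your separate treatment of the degenerate case is harmless but unnecessary. For $|V|<d$ the hypothesis cannot hold, and for $|V|=d$ the main argument already applies, since a generic $K_d$ has only trivial infinitesimal motions.
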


In addition, let us recall the precise statement of Whiteley's vertex-splitting lemmas.

\begin{lemma}[Whiteley~\cite{whiteley90vertex}]\label{lemma:whiteley_splitting_1}
  Let $G=(V,E)$ be a $d$-rigid graph, and let $v\in V$.
  Let $G'=(V',E')$ be obtained from $G$ by removing $v$ and adding two new vertices $x$ and $y$ such that $N_{G'}(x)\cup N_{G'}(y)= N_G(v)\cup\{x,y\}$, and $|N_{G'}(x)\cap N_{G'}(y)|\ge d-1$. Then, $G'$ is $d$-rigid.
\end{lemma}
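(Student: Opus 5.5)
The plan is to work with infinitesimal rigidity and the rank of the rigidity matrix. I will use the standard fact (Asimow--Roth~\cite{AR78}) that $G$ is $d$-rigid if and only if, for generic $\p$, the rigidity matrix $R(G,\p)$ has rank $S(n,d)$. Here $n=|V|$ and $S(n,d)=dn-\binom{d+1}{2}$ when $n\ge d$; for smaller $n$ the usual modified count is used and the target is a complete graph. I will also use that the generic rank is the maximum rank over all placements, by lower semicontinuity of rank. So it suffices to exhibit one (possibly non-generic) placement $\p'$ of $G'$ with $\operatorname{rank} R(G',\p')\ge S(|V|+1,d)$.

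\textbf{Construction.} Fix a generic $\p$ for $G$, with $(G,\p)$ infinitesimally rigid. Let $c_1,\dots,c_{d-1}$ be common neighbours of $x$ and $y$ in $G'$ other than $x,y$. Since $N_{G'}(x)\cup N_{G'}(y)=N_G(v)\cup\{x,y\}$, the common neighbourhood contains $x$ and $y$ only when $xy$ is an edge. I will assume $xy\in E(G')$ and the $d-1$ common neighbours lie in $V\sm\{v\}$; if $xy$ is not an edge, the hypothesis gives one extra common neighbour and the argument is the same with that row replacing the $xy$ row.

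Since $\p$ is generic, the vectors $\p(v)-\p(c_i)$ are linearly independent. Choose $u\in\RR^d$ outside their span. For $t>0$ define $\p_t$ by $\p_t(w)=\p(w)$ for $w\ne v$, $\p_t(x)=\p(v)$ and $\p_t(y)=\p(v)+tu$. Dividing the $xy$-row of $R(G',\p_t)$ by $t$ does not change the rank, and the rescaled row is $(-u \text{ at } x,\ u\text{ at } y)$ for every $t$. Let $M_0$ be the $t\to 0$ limit of the rescaled matrix: it is $R(G',\p_0)$ with the $xy$-row replaced by this $u$-row. If $\operatorname{rank} M_0\ge S(|V|+1,d)$, then by lower semicontinuity the same holds for small $t>0$, and hence generically.

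\textbf{Kernel computation.} This is the main step. Let $q\in\ker M_0$ and put $z=q(y)-q(x)$.
\begin{itemize}
\item The $u$-row gives $z\cdot u=0$.
\item For each $c_i$, the edge $xc_i$ gives $(\p(v)-\p(c_i))\cdot(q(x)-q(c_i))=0$, and the edge $yc_i$ gives the same equation with $q(y)$, since both endpoints sit at $\p(v)$ in the limit. Subtracting yields $z\cdot(\p(v)-\p(c_i))=0$.
\end{itemize}
The $d$ vectors $u$ and $\p(v)-\p(c_1),\dots,\p(v)-\p(c_{d-1})$ are linearly independent, so $z=0$ and $q(x)=q(y)$.

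Now define $\tilde q$ on $V$ by $\tilde q(v)=q(x)$ and $\tilde q=q$ elsewhere. Every edge $vw$ of $G$ corresponds to $xw$ or $yw$ in $G'$, and both $x$ and $y$ sit at $\p(v)$ with velocity $q(x)$. Hence $\tilde q$ is an infinitesimal motion of $(G,\p)$, and it is trivial by rigidity. Consequently $q$ is the restriction of a trivial motion of $\RR^d$ evaluated at $\p_0$. The space of such $q$ has dimension $\binom{d+1}{2}$ whenever $\p(V)$ affinely spans $\RR^d$, i.e.\ $|V|\ge d+1$. Thus $\dim\ker M_0=\binom{d+1}{2}$ and $\operatorname{rank}M_0=d(|V|+1)-\binom{d+1}{2}=S(|V|+1,d)$, as required.

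\textbf{Obstacles.} The main delicate point is the degenerate limit: $x$ and $y$ coincide at $t=0$, so the $xy$ constraint must be replaced by the limiting direction $u$ rather than dropped, and it is exactly this row together with the $d-1$ common neighbours that pins down $z$. A secondary nuisance is the small case $|V|\le d$, where $G$ is complete. Then $G'$ is $K_{|V|+1}$ minus some edges between $\{x,y\}$ and $N(v)$, and the same kernel argument works with the modified count $S(n,d)=\binom n2$. I expect this case to be routine bookkeeping.
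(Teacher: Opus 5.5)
Your proof is correct and takes essentially the paper's own route. The paper only cites Whiteley here, but notes that the lemma is the case $V_1=\{x,y\}$ (all other parts singletons) of \cref{thm:strong_rigid_partition_1}. Your degeneration matches the limit-framework proof of \cref{thm:gen_partitions} combined with \cref{lemma:limit_rigidity}, specialized to that partition: you collapse $x,y$ onto $\p(v)$ while keeping the limiting direction $u$ of $xy$, pin $q(x)=q(y)$ using the $d$ independent directions $u,\p(v)-\p(c_1),\dots,\p(v)-\p(c_{d-1})$, reduce to a motion of $(G,\p)$, and rescale the $xy$-row to return to $t>0$.

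Two harmless slips. First, a vertex never lies in its own open neighbourhood, so $N_{G'}(x)\cup N_{G'}(y)=N_G(v)\cup\{x,y\}$ forces $xy\in E(G')$ and the common neighbours automatically lie in $N_G(v)$. Your non-edge case is therefore vacuous, and the hypothesis would not supply an extra common neighbour there anyway. Second, $|N_{G'}(x)\cap N_{G'}(y)|\le |V|-1$ makes $|V|<d$ vacuous and forces $G'=K_{d+1}$ when $|V|=d$. That case needs no separate bookkeeping: your kernel count already covers it, since trivial motions form a $\binom{d+1}{2}$-dimensional space once the affine span of $\p(V)$ has dimension at least $d-1$.
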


\begin{lemma}[Whiteley~\cite{whiteley96some}]\label{lemma:whiteley_splitting_2}
Let $G=(V,E)$ be a $d$-rigid graph, and let $v\in V$.
  Let $G'=(V',E')$ be obtained from $G$ by removing $v$ and adding two new vertices $x$ and $y$ such that $N_{G'}(x)\cup N_{G'}(y)= N_G(v)$, and $|N_{G'}(x)\cap N_{G'}(y)|\ge d$. Then, $G'$ is $d$-rigid.
\end{lemma}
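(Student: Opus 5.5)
The plan is to prove \cref{lemma:whiteley_splitting_2} through infinitesimal rigidity, using the rigidity matrix. The tool is the Asimow--Roth criterion~\cite{AR78}. For a graph $H$ on $N\ge d+1$ vertices, $H$ is $d$-rigid if and only if its rigidity matrix $R(H,\bq)$ has rank $dN-\binom{d+1}{2}$ at a generic $\bq$. The rank of $R(H,\bq)$ is a lower semicontinuous function of $\bq$ and attains its maximum at generic configurations. Hence it is enough to exhibit one configuration $\bq$, possibly very special, at which $R(G',\bq)$ has rank $d|V'|-\binom{d+1}{2}$. Equivalently, every infinitesimal motion of $(G',\bq)$ should be trivial, provided the trivial motions of $\bq$ form a space of dimension exactly $\binom{d+1}{2}$, which holds when the affine span of $\bq(V')$ is all of $\RR^d$.

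\textbf{Small cases.} First I check that $|V|\ge d+1$. The vertices $x$ and $y$ are non-adjacent, since their neighbourhoods lie in $N_G(v)$. They have at least $d$ common neighbours, all in $V\sm\{v\}$. Hence $|V|-1\ge d$.

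\textbf{The degenerate placement.} Fix a generic $\p:V\to\RR^d$ and define $\bq$ on $V'=(V\sm\{v\})\cup\{x,y\}$ as follows: $\bq(u)=\p(u)$ for $u\ne x,y$, and $\bq(x)=\bq(y)=\p(v)$. The points of $\bq(V')$ are exactly those of $\p(V)$, and these affinely span $\RR^d$ by genericity and $|V|\ge d+1$. So the trivial motions of $\bq$ form a space of dimension $\binom{d+1}{2}$.

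\textbf{Every motion is trivial.} Let $\vect{m}$ be an infinitesimal motion of $(G',\bq)$. For each common neighbour $u$ of $x$ and $y$ we have
\[
(\p(v)-\p(u))\cdot(\vect m(x)-\vect m(u))=0=(\p(v)-\p(u))\cdot(\vect m(y)-\vect m(u)).
\]
Subtracting gives $(\p(v)-\p(u))\cdot(\vect m(x)-\vect m(y))=0$. There are at least $d$ such $u$, and by genericity of $\p$ the vectors $\p(v)-\p(u)$ span $\RR^d$. Therefore $\vect m(x)=\vect m(y)$.

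Now define $\vect m'$ on $V$ by $\vect m'(v)=\vect m(x)$ and $\vect m'(u)=\vect m(u)$ otherwise. Every edge $vu$ of $G$ comes from an edge $xu$ or $yu$ of $G'$, because $N_{G'}(x)\cup N_{G'}(y)=N_G(v)$. Since $\vect m(x)=\vect m(y)$, the edge constraint for $vu$ is satisfied. The other edges of $G$ are edges of $G'$. So $\vect m'$ is an infinitesimal motion of $(G,\p)$.

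$G$ is $d$-rigid and $\p$ is generic, so $(G,\p)$ is infinitesimally rigid and $\vect m'$ is the restriction of a trivial motion $T$ of $\RR^d$. Because $\bq(x)=\bq(y)=\p(v)$, the motion $\vect m$ agrees with $T$ on all of $V'$, so $\vect m$ is trivial. Thus $R(G',\bq)$ has the maximal rank $d|V'|-\binom{d+1}{2}$, and by lower semicontinuity $G'$ is $d$-rigid.

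\textbf{Main obstacle.} The delicate point is using a non-generic configuration in which two vertices coincide. One must check that the rank criterion still applies, i.e.\ that the space of trivial motions has full dimension $\binom{d+1}{2}$ and the rank count is therefore correct. This is handled by the affine-span check above.
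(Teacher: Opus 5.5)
Your proof is correct and is essentially the paper's argument. The paper does not prove this lemma separately; it notes that it is the special case of \cref{thm:strong_rigid_partition_1} (already covered by the warm-up \cref{prop:strong_type2_rigid_partition}) with parts $\{x,y\}$ and singletons, where the reduced graph is $G$ itself, and the proof given there is your argument: collapse each part to one generic point, use $d$ linearly independent edge directions to force $\vect m(x)=\vect m(y)$, and descend to a motion of $(G,\p)$. Your checks that $x,y$ are non-adjacent and that $|V|\ge d+1$, so that the trivial motions at the degenerate placement still have dimension $\binom{d+1}{2}$, correspond to the hypothesis $m\ge d+1$ there.
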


\subsection{Minimum degree condition for rigidity}

We will also need the following minimum degree sufficient condition for $d$-rigidity, proved under the assumption $d=O(n/{\log}^2{n})$ in~\cite{krivelevich2024minimum}, and in the general case in~\cite{jordan2025degree}. Recall that, for a graph $G=(V,E)$, we denote by $\delta(G)$ the minimum degree of a vertex in $G$.

\begin{theorem}[Jord\'an, Liu, Vill\'anyi {\cite[Theorem 1.2]{jordan2025degree}}]
\label{thm:JLV}
  Let $1\le d<n$ be integers.
  If $G$ is an $n$-vertex graph with $\delta(G)\ge \frac{n}{2}+d-1$,
  then $G$ is $d$-rigid.
\end{theorem}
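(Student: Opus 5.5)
The approach I would take is induction on $n$, using $0$-extensions (\cref{lemma:0_extension}) and the vertex-splitting \cref{lemma:whiteley_splitting_1}, with the parity of $n$ deciding which operation to use. I expect the even case to be the main obstacle and have not resolved it.

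\textbf{Base case.} If $n\le 2d$, then $\delta(G)\ge n/2+d-1\ge n-1$. So $G=K_n$, which is $d$-rigid for every $d$.

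\textbf{Codegree bound.} Any two distinct vertices $u,v$ satisfy
\[
|N(u)\cap N(v)|\ge 2\delta(G)-n\ge 2d-2\ge d-1 .
\]
This is the amount of common neighbourhood that \cref{lemma:whiteley_splitting_1} needs when splitting an edge.

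\textbf{Odd $n$.} Since degrees are integers, $\delta(G)\ge (n+1)/2+d-1$. Delete any vertex $w$. Every remaining vertex keeps degree at least $(n+1)/2+d-2=(n-1)/2+d-1$, so $G-w$ satisfies the hypothesis on $n-1$ vertices. By induction $G-w$ is $d$-rigid. Since $\deg_G(w)\ge n/2+d-1\ge d$, adding $w$ back is a $0$-extension, so $G$ is $d$-rigid by \cref{lemma:0_extension}.

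\textbf{Even $n$, plan.} Here deletion can fail: a neighbour of $w$ with degree exactly $n/2+d-1$ drops below the required $\lceil (n-1)/2+d-1\rceil=n/2+d-1$. Instead I would contract an edge $uv$ to a new vertex $z$ with $N(z)=(N(u)\cup N(v))\sm\{u,v\}$. By the codegree bound, $u$ and $v$ have at least $d-1$ common neighbours. So $G$ is obtained from $G/uv$ by a vertex split as in \cref{lemma:whiteley_splitting_1}, and it suffices to show that $G/uv$ is $d$-rigid. In $G/uv$:
\begin{itemize}
\item the new vertex $z$ has degree $|N(u)\cup N(v)|-2\ge n/2+d-3$;
\item a vertex in $N(u)\cap N(v)$ loses exactly one from its degree;
\item every other vertex keeps its degree.
\end{itemize}
The required minimum degree for $n-1$ vertices is $n/2+d-1$. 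So I need an edge $uv$ such that $|N(u)\cup N(v)|\ge n/2+d+1$ and $N(u)\cap N(v)$ avoids the set $T$ of tight vertices, those of degree exactly $n/2+d-1$.

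\textbf{Even $n$, the main obstacle.} The difficulty is finding such an edge. I would argue by cases on $T$.
\begin{itemize}
\item If $T=\es$, then $\delta(G)\ge n/2+d$, and the odd-case deletion argument works directly.
\item Otherwise, I would look for $uv$ inside the non-neighbourhood structure of $T$: either choose $u,v$ with $N(u)\cap N(v)\cap T=\es$, or first delete a tight vertex together with a suitable partner.
\end{itemize}
A more robust fallback is to delete two vertices $w,w'$ at once and re-add them by $0$-extensions. This requires $w,w'$ to have no common neighbour in $T$, and tight vertices adjacent to exactly one of them to be handled by the parity slack. I have not verified that either variant always succeeds. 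If both fail, I would switch to partitioning $V$ into adjacent pairs, which exist by Dirac's theorem, and applying \cref{thm:strong_rigid_partition_1}. In that version the difficulty moves to proving that the reduced graph is $d$-rigid, and the naive degree count there loses a factor of $2$ in $d$.
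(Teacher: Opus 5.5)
There is no proof of this statement in the paper to compare against: it is quoted from Jord\'an, Liu and Vill\'anyi and used as a black box, in \cref{thm:connector_partition} and \cref{thm:codegree}. Your argument therefore has to stand on its own, and it has a genuine gap.

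Your base case, codegree bound and odd case are correct. However, the odd case only reduces $n$ to the even number $n-1$, so the whole content of the theorem lies in the even case, which you leave open.

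The reductions you propose for the even case cannot succeed in general. Let $d\ge 2$, let $n\ge 2d+2$ be even, and let $G$ be any $(n/2+d-1)$-regular graph. Then $T=V$, and any two vertices have at least $2(n/2+d-1)-n=2d-2\ge 2$ common neighbours, all of them tight. Consequently:
\begin{itemize}
\item For \emph{every} edge $uv$, the graph $G/uv$ has a vertex of degree $n/2+d-2$, below the required $n/2+d-1$.
\item For \emph{every} pair $w,w'$, deleting both leaves a vertex of degree $n/2+d-3<(n-2)/2+d-1$.
\item Deleting a single vertex, or merging a non-adjacent pair (the reverse of \cref{lemma:whiteley_splitting_2}), fails for the same reason.
\end{itemize}
So the class of graphs with $\delta\ge n/2+d-1$ is not closed under any of these local reductions. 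No case analysis on $T$ can produce the edge or pair you are looking for.

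The missing piece is the main idea, not a detail. You need either an inductive statement that is preserved by the reduction you use (the theorem itself is not), or a non-inductive argument. Supplying this is exactly the nontrivial content of the cited result.

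Your last fallback does not close the gap as it stands. That fallback is to contract a whole perfect matching and apply \cref{thm:strong_rigid_partition_1}. For an arbitrary perfect matching, the reduced graph on $m=n/2$ vertices is only guaranteed minimum degree about $m/2+d/2-1$. This is attained, for example, when matched vertices are twins. Applying the statement inductively to the reduced graph then gives only roughly $\lfloor d/2\rfloor$-rigidity.

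Reaching $d$ would require one of two things. Either you choose the matching by an argument you have not supplied, or you use the sharper bound $\delta\ge (m+d)/2-1$ on the reduced graph. That sharper bound is a stronger statement than the one you are trying to prove.
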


\subsection{Concentration inequalities}
We will repeatedly make use of the following version of Chernoff bounds
(see, e.g., in~\cite{JLR}*{Chapter~2}).
Let $\phi(x)=(1+x)\log(1+x)-x$ for $x>-1$.
\begin{theorem}[Chernoff bounds]\label{thm:chernoff}
  Let $n\ge 1$ be an integer and let $p\in[0,1]$,
  let $X\sim\Bin(n,p)$, and let $\mu=\E{X}=np$.
  Then, for every $\nu>0$,
  \begin{align*}
    \pr(X\le \mu-\nu) &\le \exp\left(-\mu\phi\left(-\frac{\nu}{\mu}\right)\right) \le \exp\left(-\frac{\nu^2}{2\mu}\right),\\
    \pr(X\ge \mu+\nu) &\le \exp\left(-\mu\phi\left(\frac{\nu}{\mu}\right)\right) \le \exp\left(-\frac{\nu^2}{2(\mu+\nu/3)}\right).
  \end{align*}
\end{theorem}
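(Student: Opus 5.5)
The approach is the standard exponential moment (Chernoff--Cram\'er) method, followed by two elementary calculus inequalities for $\phi$. Write $X=\sum_{i=1}^n \xi_i$ with $\xi_i$ independent Bernoulli$(p)$. For any $\lambda\in\RR$, independence and $1+y\le e^y$ give
\[
\E e^{\lambda X}=(1-p+pe^{\lambda})^n\le \exp\left(\mu(e^{\lambda}-1)\right).
\]

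\emph{Upper tail.} Put $x=\nu/\mu>0$ and take $\lambda=\log(1+x)>0$. Markov's inequality applied to $e^{\lambda X}$ gives
\[
\pr(X\ge \mu+\nu)\le \exp\left(\mu(e^{\lambda}-1)-\lambda(\mu+\nu)\right)=\exp\left(\mu x-\mu(1+x)\log(1+x)\right)=\exp(-\mu\phi(x)).
\]

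\emph{Lower tail.} If $\nu>\mu$ the event is empty, since $X\ge 0$. If $\nu=\mu$, then $\pr(X=0)=(1-p)^n\le e^{-\mu}$, which is the bound with $\phi(-1):=\lim_{x\to-1^+}\phi(x)=1$. For $0<x=\nu/\mu<1$, apply Markov to $e^{-\lambda X}$ with $\lambda=-\log(1-x)>0$. This gives
\[
\pr(X\le\mu-\nu)\le \exp\left(\mu(e^{-\lambda}-1)+\lambda(\mu-\nu)\right)=\exp\left(-\mu x-\mu(1-x)\log(1-x)\right)=\exp(-\mu\phi(-x)).
\]

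\emph{Simplified forms.} It remains to show two inequalities.

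First, $\phi(-x)\ge x^2/2$ for $0\le x\le 1$. We have $\phi(0)=\phi'(0)=0$ and $\phi''(y)=1/(1+y)\ge 1$ for $y\le 0$. Integrating twice from $0$ down to $-x$ gives the claim, and at $x=1$ it holds by continuity. Then $\mu\cdot(\nu/\mu)^2/2=\nu^2/(2\mu)$.

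Second, $\phi(x)\ge h(x):=\frac{x^2}{2(1+x/3)}=\frac{3x^2}{2(3+x)}$ for $x\ge 0$. Both sides and their first derivatives vanish at $0$, so it suffices to compare second derivatives: $\phi''(x)=1/(1+x)$ and $h''(x)=27/(3+x)^3$. The needed inequality $(3+x)^3\ge 27(1+x)$ expands to $9x^2+x^3\ge 0$. Substituting $x=\nu/\mu$ gives $\mu h(\nu/\mu)=\nu^2/(2(\mu+\nu/3))$.

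The only delicate points are the boundary case $\nu\ge\mu$ in the lower tail and checking the second-derivative comparison; neither is a real obstacle.
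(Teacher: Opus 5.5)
Your proof is correct. The exponential-moment bound with $\lambda=\pm\log(1\pm\nu/\mu)$, the separate handling of $\nu\ge\mu$ in the lower tail, and the second-derivative comparisons ($\phi''(y)=1/(1+y)\ge 1$ on $(-1,0]$, and $1/(1+x)\ge(1+x/3)^{-3}$ on $[0,\infty)$) all check out. You tacitly assume $p>0$, which the statement itself needs for $\nu/\mu$ to make sense.

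The paper gives no proof of its own; it cites Janson--\L{}uczak--Ruci\'nski, Chapter~2. That proof is the same standard Chernoff--Cram\'er argument, followed by the same calculus inequalities for $\phi$.
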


\section{A combinatorial sufficient condition for the existence of a large $d$-rigid component}\label{sec:connector_partition}

Here, we prove \cref{thm:connector_partition}, which will be a key ingredient in the proofs of \cref{thm:gnp,thm:rigid_component_gnp:2,thm:random_regular}.

\begin{proof}[Proof of \cref{thm:connector_partition}]
Let $s=4k$. For $1\le i<j\le m$, we call a set $C\subseteq V_i\cup V_j$ {\em big} if $|C\cap V_i|\ge k$ and $|C\cap V_j|\ge k$.

\begin{claim}\label{claim:unique_big_component}
    Let $\{i,j\}\in E'$, and let $U_i\subseteq V_i$ and $U_j\subseteq V_j$ such that $|U_i|,|U_j|\le s$. Then, the subgraph $G[V_i\sm U_i,V_j\sm U_j]$ contains a unique big connected component.
\end{claim}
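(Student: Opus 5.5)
Write $A=V_i\sm U_i$ and $B=V_j\sm U_j$, and let $H=G[A,B]$, the bipartite graph of edges between $A$ and $B$. Since $|V_i|,|V_j|>7k-3$ and $|U_i|,|U_j|\le s=4k$, we have $|A|,|B|\ge 3k-2$. The only property of $H$ we use comes from $\{i,j\}\in E'$: every $X\subseteq A$ and $Y\subseteq B$ with $|X|=|Y|=k$ span an edge of $H$. Equivalently, \emph{if $X\subseteq A$ and $Y\subseteq B$ have no edges between them, then $|X|<k$ or $|Y|<k$.} The plan is to prove existence and uniqueness separately, each by a short counting argument applied to the connected components of $H$.

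\emph{Uniqueness.} Suppose $C_1\neq C_2$ are two big components of $H$. Then $C_1\cap A$ and $C_2\cap B$ are sets of size at least $k$ with no edges between them, since they lie in different components. This contradicts the property above, so there is at most one big component.

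\emph{Existence.} Let $C_1,\ldots,C_t$ be the components of $H$ (isolated vertices count as components), and write $a_\ell=|C_\ell\cap A|$ and $b_\ell=|C_\ell\cap B|$. Suppose, for contradiction, that no component is big, so each $\ell$ has $a_\ell<k$ or $b_\ell<k$.

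First, call $C_\ell$ \emph{$A$-heavy} if $a_\ell\ge k$ (hence $b_\ell\le k-1$), and \emph{$B$-heavy} if $b_\ell\ge k$. If there were both an $A$-heavy and a $B$-heavy component, they would give a $k$-set in $A$ and a $k$-set in $B$ with no edges between them, a contradiction. So by symmetry we may assume no component is $B$-heavy, that is, $b_\ell\le k-1$ for every $\ell$.

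Next, order the components and greedily take a union $S$ of whole components until $|S\cap B|\ge k$. Since each component adds at most $k-1$ vertices of $B$, we stop with $k\le |S\cap B|\le 2k-2$; this is possible because $|B|\ge 3k-2\ge k$. The remaining components have union $T$ with $|T\cap B|\ge |B|-(2k-2)\ge k$.

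Since $S$ and $T$ are unions of distinct components, there are no edges between $S\cap A$ and $T\cap B$, and none between $T\cap A$ and $S\cap B$. Both $|S\cap B|\ge k$ and $|T\cap B|\ge k$, so the key property forces $|S\cap A|\le k-1$ and $|T\cap A|\le k-1$. Hence $|A|\le 2k-2$, contradicting $|A|\ge 3k-2$. This proves existence.

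The only delicate point is the balancing in the greedy step. The gap between $2k-2$ and $3k-2$ is exactly what the hypothesis $|V_i|>7k-3$ (that is, $|V_i|\ge 7k-2$) provides after removing $4k$ vertices, so I expect the constants to close as written. If they did not, the fallback would be to split the components more carefully by their $A$-sizes instead.
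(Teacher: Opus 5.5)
Your proof is correct and follows essentially the same route as the paper: uniqueness is argued identically, and for existence the paper likewise takes a minimal union of whole components having between $k$ and $2k-2$ vertices on one side, then plays it against the complementary components using the $E'$ property and the same slack $|V_j|-s\ge 3k-2$. The differences are only cosmetic: the paper handles one-side-heavy components by showing they are automatically big, where you instead show that $A$-heavy and $B$-heavy components cannot coexist; and the paper closes with a single application of the $E'$ property, where you bound both $|S\cap A|$ and $|T\cap A|$.
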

\begin{proof}
   It is immediate that we cannot have two big components $C,C'$, as otherwise, since $\{i,j\}\in E'$, $G$ has an edge between $C\cap V_i$ and $C'\cap V_j$, a contradiction to $C$ and $C'$ being distinct components.

 To prove there is at least one such component, notice that if there is a component $C$ with $|C\cap V_i|\ge k$, then,  since $\{i,j\}\in E'$, the set $C\cap V_i$ is connected to at least $|V_j\sm U_j|-k+1\ge k$ vertices in $V_j\sm U_j$, providing the desired component; similarly for a component with at least $k$ vertices in $V_j$. Assume thus for contradiction that all components in $G[V_i\sm U_i, V_j\sm U_j]$ have less than $k$ vertices on each side.
Let ${\cal C}_0$ be a minimal by inclusion collection of connected components in this graph whose union contains at least $k$ vertices in at least one of the parts $V_i$ or $V_j$. Without loss of generality, assume that the union of the components in ${\cal C}_0$ has at least $k$ vertices in $V_i$. Then, by the minimality of $\mathcal{C}_0$, it has at most $2k-2$ vertices in $V_j$, and thus, since $\{i,j\}\in E'$, the vertices of ${\cal C}_0$ in $V_i$ are connected to at least $|V_j\sm U_j|-(2k-2)-k+1 = |V_j|-s-3k+3>0$ vertices in $V_j\sm U_j$ outside of ${\cal C}_0$, a contradiction to $\mathcal{C}_0$ being a union of connected components.
\end{proof}

For $\{i,j\}\in E'$ and $U\subseteq V$ such that $|U\cap V_i|, |U\cap V_j|\le s$, denote by $C_{ij}(U)$ the unique big connected component in $G[V_i\sm U,V_j\sm U]$, which exists by Claim \ref{claim:unique_big_component}. Notice that, by the uniqueness of the big components, we have, for all $U\subseteq U'$, $C_{ij}(U')\subseteq C_{ij}(U)$.

For $1\le i\le m$, let $T_i=\deg_{G_0}(i)(1-k/s)$.
Let $U\subseteq V$ such that $|U\cap V_i|\le s$ for all $1\le i\le m$.
We say that a vertex $v\in V_i\sm U$, $1\le i\le m$, is \emph{bad} for $U$ if $v$ is contained in less than $T_i$ of the big components $C_{ij}(U)$, for $j\in N_{G_0}(i)$. Note that, if $v$ is bad for $U$, then it is bad for all $U'\supset U$ with $v\notin U'$ (indeed, assuming $v\in V_i$, for every $j$ such that $v\notin C_{ij}(U)$, we must have $v\notin C_{ij}(U')\subseteq C_{ij}(U)$).

\begin{claim}\label{claim:good_set}
    There exists $U_0\subseteq V$ satisfying $|U_0\cap V_i|\le s$ for all $1\le i\le m$, such that there are no bad vertices for $U_0$.
\end{claim}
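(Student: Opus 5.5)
We construct $U_0$ greedily by a deletion process and show that it never violates the size constraint. Start with $U^{(0)}=\es$. As long as some vertex $v\in V_i\sm U^{(t)}$ is bad for $U^{(t)}$ and $|U^{(t)}\cap V_i|<s$, set $U^{(t+1)}=U^{(t)}\cup\{v\}$. Every intermediate set satisfies $|U^{(t)}\cap V_i|\le s$ for all $i$, so all the big components $C_{ij}(U^{(t)})$ are defined by \cref{claim:unique_big_component}. If the process stops with every part having fewer than $s$ deleted vertices, there are no bad vertices and we take $U_0=U^{(t)}$. So it suffices to rule out that some part $V_i$ collects $s$ deleted vertices. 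Consider the first time $t^*$ at which this happens, and put $U=U^{(t^*)}$.

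\emph{Key structural fact.} First I would strengthen \cref{claim:unique_big_component}: for $\{i,j\}\in E'$ and admissible $U$, the vertices of $V_i\sm U$ lying outside $C_{ij}(U)$ number fewer than $k$, and symmetrically for $V_j$. Suppose not, and take $k$ such vertices in $V_i$. Together with $k$ vertices of $C_{ij}(U)\cap V_j$, which exist because the component is big, they form a pair of $k$-sets with no edge between them. This contradicts $\{i,j\}\in E'$.

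\emph{Counting.} Each deleted $v\in V_i$ was bad at its deletion time $t_v$. Hence $v\notin C_{ij}(U^{(t_v)})$ for at least $\deg_{G_0}(i)-T_i=\deg_{G_0}(i)\,k/s$ indices $j\in N_{G_0}(i)$. Summing over the $s$ deleted vertices of $V_i$ gives at least $\deg_{G_0}(i)\cdot k$ pairs $(v,j)$ with $v\notin C_{ij}(U^{(t_v)})$. By pigeonhole, some $j\in N_{G_0}(i)$ has at least $k$ deleted vertices $v\in V_i$ with $v\notin C_{ij}(U^{(t_v)})$. To contradict the structural fact, I want to transport all these vertices to a single admissible set. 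Let $U'=(U\sm V_i)\cup (U^{(t^*)}\cap V_i\sm\{\text{these } k \text{ vertices}\})$, or more simply work in the bipartite graph $G[V_i\sm (U\cap V_i\sm S),V_j\sm U]$, where $S$ is the set of these $k$ vertices.

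\emph{Transport step.} The claim to prove is that a vertex $v\in S$ outside $C_{ij}(U^{(t_v)})$ is still outside the big component after we delete further vertices of $V_j$ and further vertices of $V_i$ other than those in $S$. Deleting vertices only splits components, and the big component of the smaller graph is contained in that of the larger, by the monotonicity $C_{ij}(U')\subseteq C_{ij}(U)$ noted above. Vertices of $S$ that were deleted after $v$ are re-inserted here, so I must check that re-inserting them cannot pull $v$ into the big component. I would handle this by processing the vertices of $S$ in order of deletion and applying monotonicity only to deletions. If this is delicate, I would use the alternative of counting, at each time, the fewer than $k$ vertices of $V_i$ outside the big component, bounding their total via the structural fact applied at time $t^*$ to $V_i\sm U$ together with $S$.

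Either way, the result is $k$ vertices of $V_i$ outside a big component of an admissible bipartite graph. This contradicts the structural fact, so no part ever reaches $s$ deletions. I expect the transport step to be the main obstacle: controlling the interplay between deletion times and monotonicity of $C_{ij}$ is where the constants $s=4k$ and $|V_i|>7k-3$ should enter.
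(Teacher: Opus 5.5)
Your framework is the paper's. The greedy sets $U^{(t)}$ lie in the paper's family $\mathcal{U}$: each $v\in U$ is bad for $U\sm\{v\}$, because $U^{(t_v)}\subseteq U\sm\{v\}$ and badness is monotone. Your pigeonhole step, which produces $j\in N_{G_0}(i)$ and a set $S\subseteq U\cap V_i$ of $k$ vertices with $v\notin C_{ij}(U^{(t_v)})$, is exactly the paper's double counting. However, you never close the argument. The ``transport step'' is left open with two hedged strategies: processing $S$ in deletion order, or an unspecified counting at time $t^*$. Neither is carried out. Since this is the only non-routine step of the claim, the proposal as written does not establish the contradiction.

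The missing idea is that a vertex outside a big component has no neighbour in it; with this, no re-insertion is needed. Since $v\in V_i\sm U^{(t_v)}$ and $v\notin C_{ij}(U^{(t_v)})$, the vertex $v$ has no neighbour in $C_{ij}(U^{(t_v)})\cap V_j$. Any such edge would lie in $G[V_i\sm U^{(t_v)},V_j\sm U^{(t_v)}]$ and put $v$ into that component. By monotonicity, $C_{ij}(U)\subseteq C_{ij}(U^{(t_v)})$. Hence no vertex of $S$ has a neighbour in $C_{ij}(U)\cap V_j$, which has at least $k$ vertices because $C_{ij}(U)$ is big. Taking $X\subseteq S$ and $Y\subseteq C_{ij}(U)\cap V_j$ with $|X|=|Y|=k$ gives $E_G(X,Y)=\es$. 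This contradicts $\{i,j\}\in E'$, whose definition ranges over all $k$-subsets of $V_i$ and $V_j$, deleted vertices included. The paper phrases the same point as $C_{ij}(U_0\sm\{u\})=C_{ij}(U_0)$ with $u$ non-adjacent to it.

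This observation also settles your worry about re-insertion. No vertex of $S$ touches $C_{ij}(U)$, so $C_{ij}(U)$ remains the unique big component of $G[V_i\sm(U\sm S),V_j\sm U]$.

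Finally, you misplace the role of the constants $s=4k$ and $|V_i|>7k-3$. They do not enter this step. They matter only for the existence of big components in \cref{claim:unique_big_component}. The pigeonhole yields exactly $k$ for any $s$, by the definition of $T_i$.
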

\begin{proof} Let
\[
    \mathcal{U} = \left\{U\subseteq V:\, |U\cap V_i|\le s \text{ for all } i\in[m], \text{ and } \text{$v$ is bad for $U\sm \{v\}$ for all $v\in U$}\right\}.
\]
Note that $\mathcal{U}$ is non-empty, as $\es\in \mathcal{U}$.
Let $U_0\subseteq V$ be a maximal set in $\mathcal{U}$. We will show that there are no bad vertices for $U_0$.  Assume for contradiction that there exist $1\le i\le m$ and $v\in V_i\sm U$ which is bad for $U_0$. If $|U_0\cap V_i|<s$, then $U'=U_0\cup\{v\}$ belongs to $\mathcal{U}$. Indeed, $v$ is bad for $U'\sm \{v\}=U_0$ by definition, and for $u\in U_0$, $u$ is bad for $U_0\sm\{u\}$, and therefore it is also bad for $U'\sm\{u\}\supset U_0\sm\{u\}$. But this contradicts the maximality of $U_0$.

Therefore, we must have $|U_0\cap V_i|=s$. By the definition of $\mathcal{U}$, every $u\in U_0\cap V_i$ does not belong to at least $\deg_{G_0}(i)-T_i$ of the components $C_{ij}(U_0\sm\{u\})$, $j\in N_{G_0}(i)$. Note that, in such case, $C_{ij}(U_0\sm \{u\})=C_{ij}(U_0)$, and $u$ is not adjacent to any vertex in $C_{ij}(U_0)$. By double counting, there must be some $j\in N_{G_0}(i)$ and a set $A\subseteq V_i\cap U_0$ with
\[
|A|\ge s(\deg_{G_0}(i)-T_i)/\deg_{G_0}(i)= s(1-T_i/\deg_{G_0}(i))= k,
\]
such that none of the vertices of $C_{ij}(U_0)$ is adjacent to any of the vertices in $A$.
But, by definition, $|C_{ij}(U_0)\cap V_j|\ge k$, a contradiction to $\{i,j\}\in E'$.
\end{proof}

Now, let $U_0\subseteq V$ be a set satisfying $|U_0\cap V_i|\le s$ for all $1\le i\le m$ and such that there are no bad vertices for $U_0$, whose existence is guaranteed by Claim \ref{claim:good_set}. Then, for every $1\le i\le m$ and  $u,v\in V_i\sm U_0$, $u$ and $v$ belong each to at least $T_i$ big components $C_{ij}(U_0)$, for $j\in N_{G_0}(i)$, and therefore they belong to at least $2T_i-\deg_{G_0}(i)=
\deg_{G_0}(i)(1-2k/s)\ge \delta(G_0)(1-2k/s)=\delta(G_0)/2$  common big components (where we used $s=4k$). Note that, for $\eta\le 1$,
\[
(\eta-1/2)m - (\eta m-1)/2 = (\eta-1)m/2+1/2 <1,
\]
and so
\[
\lceil \delta(G_0)/2\rceil \ge \lceil (\eta m-1)/2 \rceil \ge \lfloor (\eta-1/2)m\rfloor=d.
\]
That is, each $u,v\in V_i$ are connected by a path in $G[V_i\sm U_0,V_j\sm U_0]$ for at least $d$ different indices $j\in N_{G_0}(i)$.

In addition, since $|V_i\sm U_0|> 3k-3\ge k$ for all $1\le i\le m$, there is an edge between $V_i\sm U_0$ and $V_j\sm U_0$ for all $\{i,j\}\in E'$. In other words, the reduced graph $G'$ associated with the partition $\{V_i\sm U_0\}_{i=1}^m$ of $G[V\sm U_0]$ contains $G_0$ as a (spanning) subgraph. By \cref{thm:JLV}, $G_0$ is $d$-rigid, and therefore $G'$ is $d$-rigid as well. Hence, by \cref{thm:strong_rigid_partition_1}, $G[V\sm U_0]$ is $d$-rigid, as wanted.
\end{proof}

\begin{remark}
    Note that if $\eta=1$, then the graph $G_0$ is a complete graph, and therefore it is $(m-1)$-rigid (and in particular, $d$-rigid for $d=\lfloor m/2\rfloor$). Thus, the special case $\eta=1$ of \cref{thm:connector_partition} (which we use in the proofs of \cref{thm:rigid_component_gnp:2,thm:random_regular}) can be proved without applying \cref{thm:JLV}.
\end{remark}

Given a graph $G=(V,E)$ and a vertex set $U\subseteq V$,
denote $\partial{U}=E(U,V\sm U)$. For $k\ge 1$, let
\[
  \iso(G;k) = \min_{\substack{U\subseteq V,\\k\le |U|\le |V|/2}} \frac{|\partial U|}{|U|}.
\]
Let $\iso(G)=\iso(G;1)$ be the \defn{isoperimetric number} of $G$.

The following lemma will be useful.
\begin{lemma}[Absorption]
\label{lemma:absorption}
  Let $d\ge 1$, $k\ge 1$,
  and let $G=(V,E)$ be a graph with $\iso(G;k)\ge d$.
  Let $B\subseteq V$ such that
  $|B|\ge|V|/2$ and $G[B]$ is $d$-rigid.
  Then, there is $B'\supseteq B$ with $|B'|> |V|-k$ such that $G[B']$ is $d$-rigid.
\end{lemma}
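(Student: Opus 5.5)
We grow $B$ greedily using $0$-extensions (\cref{lemma:0_extension}). Let $B'\supseteq B$ be a maximal set such that $G[B']$ is $d$-rigid and $B'$ can be reached from $B$ by repeatedly adding a vertex with at least $d$ neighbours in the current set. Each such addition preserves $d$-rigidity by \cref{lemma:0_extension}, so $G[B']$ is $d$-rigid. Set $U=V\sm B'$. By maximality, every vertex $u\in U$ has at most $d-1$ neighbours in $B'$. We aim to show $|U|<k$, which gives $|B'|>|V|-k$.

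Suppose for contradiction that $|U|\ge k$. Since $B'\supseteq B$ and $|B|\ge |V|/2$, we have $|U|\le |V|-|B|\le |V|/2$. Thus $U$ is an admissible set in the definition of $\iso(G;k)$, and hence $|\partial U|\ge d|U|$. On the other hand, every edge of $\partial U=E(U,B')$ has its endpoint in $U$ at some $u\in U$, and each such $u$ contributes at most $d-1$ edges. Therefore $|\partial U|\le (d-1)|U|<d|U|$, since $|U|\ge k\ge 1$. This is a contradiction, so $|U|\le k-1$, as required.

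The argument has essentially no obstacle. The only points to check are that $U$ is nonempty with size between $k$ and $|V|/2$, so that the isoperimetric bound applies, and that the maximality condition gives the strict inequality $\deg_{B'}(u)\le d-1$. Both follow directly from the construction.
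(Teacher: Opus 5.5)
Your proof is correct and follows the paper's argument: both take a maximal $d$-rigid superset of $B$, note that its complement $U$ satisfies $k\le |U|\le |V|/2$ if the claim fails, and use $\iso(G;k)\ge d$ to find a vertex of $U$ with at least $d$ neighbours in the rigid set, contradicting maximality via \cref{lemma:0_extension}. The only cosmetic difference is that the paper takes a maximum-sized $W\supseteq B$ with $G[W]$ $d$-rigid, whereas you take the closure of $B$ under $0$-extensions; the contradiction step is identical.
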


\begin{proof}

  Let $W\supseteq B$ be a maximum-sized set for which $G[W]$ is $d$-rigid. Assume for contradiction that $|W|\le |V|-k$.
  Write $U=V\sm W$.
  By assumption, we know that $k\le |U|\le |V|/2$.
  By definition, $|\partial{U}|\ge \iso(G;k)\cdot|U|\ge d|U|$,
  hence, by averaging, there exists $u\in U$ with at least $d$ neighbours in $W$.
  Finally, by \cref{lemma:0_extension}, $G[W\cup\{u\}]$ is $d$-rigid, contradicting the maximality of $W$.
\end{proof}

\section{Rigidity of binomial random graphs}\label{sec:binomial}

We will need the following lemma about edge expansion in binomial random graphs. Our argument is fairly standard (see, for example,~\cite{benjamini2008isoperimetric} and the references within).

\begin{lemma}\label{lemma:gnp_expansion_general}
  For every $\eps\in(0,1]$
  there exist constants $C=C(\eps)>0$ and $\kappa=\kappa(\eps)>0$
  such that the following holds.
  If $p\ge C/n$ and $G\sim G(n,p)$, then, \whp{}, $\iso(G;k_0)\ge \kappa n p$ for $k_0=\ceil{n\exp(-(1-\eps/2)np)}$. For $\eps=1$, we may take $\kappa=1/30$ and $C=40$.
\end{lemma}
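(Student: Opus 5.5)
We prove the lemma with a first-moment (union bound) argument over sets $U$ with $k_0\le |U|\le n/2$, split into two ranges of $u=|U|$. Throughout, $\mu=np\ge C$. For a fixed $U$ of size $u$, $|\partial U|\sim\Bin(u(n-u),p)$ with mean at least $u\mu/2$. We want to bound the probability that $|\partial U|<\kappa u\mu$.

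\emph{Large sets.} Take $u\ge \gamma n$ for a small constant $\gamma$, or more generally any range where the Chernoff lower-tail exponent beats the entropy. Since $\kappa$ is small compared to the mean ratio $(n-u)/n\ge 1/2$, \cref{thm:chernoff} gives
\[
\pr(|\partial U|\le \kappa u\mu)\le \exp\bigl(-c_1u\mu\bigr),
\]
with $c_1$ close to $1/2$ when $\kappa$ is small. Indeed, $\phi(-1+\theta)\to 1$ as $\theta\to0$, so the exponent is roughly $u(n-u)p(1-O(\kappa\log(1/\kappa)))$. The number of sets of size $u$ is at most $(en/u)^u$. The union bound term is therefore
\[
\exp\bigl(u[\log(en/u)-c_1\mu]\bigr).
\]
This is summable whenever $\log(en/u)\le c_1\mu/2$, say. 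For $u$ linear in $n$ this holds once $C$ is large.

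\emph{Small sets.} For $k_0\le u\le \gamma n$, the difficulty is that $\log(en/u)$ can be as large as $\log(n/k_0)\approx(1-\eps/2)\mu$. Here we use the sharper bound: $n-u\ge(1-\gamma)n$, so the mean is at least $(1-\gamma)u\mu$. The lower-tail exponent is then at least
\[
(1-\gamma)u\mu\,\phi\bigl(-1+\kappa/(1-\gamma)\bigr)\ge (1-\gamma)(1-\kappa\log(e/\kappa))\,u\mu .
\]
Choose $\gamma,\kappa$ small in terms of $\eps$ so that this factor is at least $1-\eps/4$. The union bound term becomes
\[
\exp\bigl(u[1+\log(n/u)-(1-\eps/4)\mu]\bigr)\le \exp\bigl(u[1-\eps\mu/4]\bigr),
\]
using $\log(n/u)\le\log(n/k_0)\le(1-\eps/2)\mu$. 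With $\mu\ge C$ large this is at most $e^{-u\eps\mu/8}$, and summing over $u\ge1$ gives $o(1)$ for $\mu\to\infty$. For bounded $\mu\ge C$, $u\ge k_0$ is linear in $n$, so the sum is $n\cdot e^{-\Omega(n)}$; in either case the total is $o(1)$.

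\emph{Explicit constants and the main obstacle.} For $\eps=1$ we rerun the computation with $\kappa=1/30$ and $C=40$, taking the small-set threshold at, say, $\gamma=1/10$ and checking both inequalities numerically. The main obstacle is the small-set regime near $k_0$. There the entropy $\log(n/u)$ nearly matches $\mu$, so we must use the precise Chernoff exponent $\mu\phi(\cdot)$ rather than the quadratic bound. This is the reason for the factor $(1-\eps/2)$ in $k_0$: it leaves the $\eps\mu/4$ slack. Care is also needed at the boundary between the two regimes, so that one of the two estimates applies to every $u\le n/2$.
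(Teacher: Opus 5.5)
Your proposal is correct and is essentially the paper's argument. Both use a union bound over $k_0\le|U|\le n/2$ with the sharp lower-tail exponent $\mu\phi(\cdot)$, and split at a small linear threshold; the paper splits at $(\eps/8)n$ with a single bound $\phi(-(1-2\kappa))\ge 1-\eps/4$, while you split at $\gamma n$ and fold the factor $1-\gamma$ into the choice of $\gamma,\kappa$.

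Two small points. First, your displayed inequality $\phi(-1+\kappa/(1-\gamma))\ge 1-\kappa\log(e/\kappa)$ actually goes the other way, since $t\log(e/t)$ is increasing on $(0,1)$. This is harmless: you only need $\phi(-1+t)\to 1$, and for $\gamma=1/10$, $\kappa=1/30$ the true factor is about $0.757>3/4$. Second, your ``$\mu\to\infty$ or $\mu$ bounded'' dichotomy is cleaner if replaced by the paper's observation that $npk_0\to\infty$ in all cases.
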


\begin{proof}
  Let $\kappa=\kappa(\eps)\in(0,1/2)$ be a constant to be determined later. Let $C=40/\eps$. For $U\subseteq [n]$, let $X_U$ be the event that $|E(U,[n]\sm U)|< \kappa  n p |U|$.
  For $1\le k\le n$, denote $p_k=\pr(X_U)$ for $U\in \binom{[n]}{k}$ (note that $\pr(X_U)$ depends only on $k$).
  Let
  \[
    X=\bigcup_{U\subseteq[n]:\ k_0\le |U|\le n/2} X_U.
  \]
  We show that $\pr(X)=o(1)$.

  Let $k_0\le k\le n/2$ and let $U\in\binom{[n]}{k}$.
  Let $Y_U=|E(U,[n]\sm U)|$.
  Note that $Y_U$ is a binomial random variable with $k(n-k)$ attempts and success probability $p$, and so
  \[
    \mu=\E(Y_U)=k(n-k)p.
  \]
  Let $\nu=\mu-\kappa n p k$.
  Since $k\le n/2$ we have $n-k\ge n/2$, and
  \[
    \frac{\nu}{\mu}=1-\frac{\kappa n}{n-k}\ge 1-2\kappa.
  \]
  By Chernoff bounds (\cref{thm:chernoff}) and the fact that $\phi(x)=(1+x)\log(1+x)-x$ is decreasing in $(-1,0)$,
  \begin{align*}
    p_k
    &=\pr(X_U)=\pr(Y_U<\kappa npk)=\pr(Y_U<\mu-\nu) \\
    &\le \exp(-\mu\phi(-\nu/\mu))
    \le \exp(-\mu\phi(-(1-2\kappa))).
  \end{align*}
  Since $\phi(x)$ converges to $1$ as $x\searrow -1$,
  we may choose $\kappa$ so that
  \[
    \phi(-(1-2\kappa))\ge 1-\eps/4.
  \]
(In the special case $\eps=1$, $\kappa=1/30$ satisfies this condition.) Hence, for all $k_0\le k\le n/2$,
  \[
    p_k\le \exp(-(1-\eps/4)pk(n-k)).
  \]

  We now split into two cases.
  \begin{description}
    \item[Case 1: $k_0\le k\le (\eps/8)n$.]
      In this case,
      \[
        \begin{aligned}
          \binom{n}{k}p_k
          &\le \left(\frac{en}{k}\right)^k \exp\left(-(1-\eps/4)pk(1-\eps/8)n\right)\\
          &\le \left(\frac{en}{k_0}\right)^k \exp\left(-(1-\eps/4)pk(1-\eps/8)n\right)\\
          &\le \exp(k(1+(1-\eps/2)np - (1-\eps/4)(1-\eps/8)np))\\
          &\le \exp(k(1-(\eps/8)np))
          \le \exp(-(\eps/16)npk).
        \end{aligned}
      \]
      In the last step, we used the fact that $np>C=40/\eps>16/\eps$.
    \item[Case 2: $(\eps/8)n < k\le n/2$.]
      In this case, $k(n-k)\ge (\eps/8)n\cdot (n/2)=(\eps/16)n^2$, so
      \[
        p_k\le \exp(-(1-\eps/4)(\eps/16) n^2 p)
            = \exp(-\gamma  n^2 p),
      \]
      where $\gamma=(1-\eps/4)(\eps/16)>0$.
      Using $\binom{n}{k}\le 2^n$, we get
      \[
        \binom{n}{k}p_k \le \exp(n\log 2-\gamma  n^2 p).
      \]
      Since $C= 40/\eps$, we have $\gamma n p \ge \gamma C= 40\gamma/\eps>\log 2+1$,
      and hence $\binom{n}{k}p_k\le e^{-n}$ for all $k$.
  \end{description}

  Before proceeding, note that $npk_0=\omega(1)$.
  Indeed, if $np\ge\log{n}$ then $npk_0\ge\log{n}=\omega(1)$;
  and, noting that $k_0$ is decreasing in $np$,
  if $np\le\log{n}$ then $k_0\ge n^{\eps/2}=\omega(1)$,
  which implies (since $np\ge C$) that $npk_0=\omega(1)$.
  Write $\rho=\frac{\eps}{16}np\ge 1$,
  so $e^{-\rho k_0}=o(1)$.
  Combining the two cases above and applying the union bound, we obtain
  \begin{align*}
    \pr(X)
    &\le \sum_{k=k_0}^{\lfloor (\eps/8)n\rfloor} \binom{n}{k}p_k
      +\sum_{k=\lfloor (\eps/8)n\rfloor+1}^{\lfloor n/2\rfloor} \binom{n}{k}p_k \\
    &\le \sum_{k=k_0}^{\infty}\exp(-\rho k) + n e^{-n}
    = \frac{e^{-\rho k_0}}{1-e^{-\rho}} + o(1)
     = o(1),
  \end{align*}
  as required.
\end{proof}

\begin{corollary}\label{lemma:gnp_expansion}
  For every $\eps>0$ there is $\kappa=\kappa(\eps)\in(0,1)$
  such that for $p\ge (1+\eps)\log{n}/n$,
  a graph $G\sim G(n,p)$ satisfies $\iso(G)\ge \kappa n p$ \whp{}.
\end{corollary}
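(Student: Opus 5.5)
The plan is to combine \cref{lemma:gnp_expansion_general}, which controls large sets, with a separate argument for small sets, i.e.\ those of size below $k_0$. Given $\eps>0$, I apply \cref{lemma:gnp_expansion_general} with a parameter $\eps'=\min\{\eps,1\}$ (possibly shrunk further). Since $p\ge(1+\eps)\log n/n\ge C(\eps')/n$ for large $n$, this gives \whp{} $|\partial U|\ge \kappa' np|U|$ for all $U$ with $k_0\le|U|\le n/2$. Here
\[
k_0=\ceil{n\exp(-(1-\eps'/2)np)}\le \ceil{n^{1-(1+\eps)(1-\eps'/2)}}.
\]
If we choose $\eps'$ small enough that $(1+\eps)(1-\eps'/2)>1+\eps/3$, then $k_0\le \ceil{n^{-\eps/3}}=1$. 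For example, $\eps'=\min\{1,\eps/2\}$ works, since then $(1+\eps)(1-\eps/4)\ge 1+\eps/3$ for $\eps\le 2$, and the case $\eps>2$ is easier. Thus $k_0=1$ once $n$ is large, and the isoperimetric bound holds for all $1\le |U|\le n/2$. This gives $\iso(G)\ge \kappa' np$ \whp{}, and we set $\kappa=\kappa(\eps)=\kappa'(\eps')$.

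The one thing to check is that the choice of $\eps'$ really forces $k_0=1$ uniformly over all $p\ge(1+\eps)\log n/n$. This holds because $k_0$ is decreasing in $np$, so the worst case is $np=(1+\eps)\log n$. There we get $n\exp(-(1-\eps'/2)np)=n^{1-(1+\eps)(1-\eps'/2)}<1$, hence $k_0=1$.

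If one prefers to avoid the monotonicity subtlety, an alternative for small sets is a direct union bound. Every vertex has degree at least $\kappa'' np$ \whp{}, by Chernoff with $np\ge(1+\eps)\log n$ and $\kappa''$ small depending on $\eps$. Moreover, every set $U$ of size $k<k_0$ spans few internal edges. Hence $|\partial U|\ge \sum_{u\in U}\deg(u)-2e(U)\ge \kappa np|U|$.

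I expect the reparametrisation making $k_0=1$ to be the whole proof, so the main obstacle is just bookkeeping of the constants.
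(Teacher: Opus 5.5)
Your proposal is correct and is essentially the paper's proof: apply \cref{lemma:gnp_expansion_general} with a small enough parameter and note that $k_0=1$ for large $n$. (The paper first reduces to $\eps<1/2$ and uses $\eps$ itself, getting $n\exp(-(1-\eps/2)np)\le n^{-\eps(1-\eps)/2}<1$.)

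One harmless slip: $(1+\eps)(1-\eps/4)\ge 1+\eps/3$ holds only for $\eps\le 5/3$, and $(1+\eps)/2\ge 1+\eps/3$ only for $\eps\ge 3$. All you actually need is $(1+\eps)(1-\eps'/2)>1$, which holds for every $\eps>0$ with your $\eps'=\min\{1,\eps/2\}$. So $k_0=1$ regardless, and your small-set alternative is unnecessary.
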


\begin{proof}
  By monotonicity in $p$, we may assume $\eps\in(0,1/2)$.
  Apply \cref{lemma:gnp_expansion_general} with parameter $\eps$
  to obtain $\kappa=\kappa(\eps)\in(0,1)$.
  Since $np\ge (1+\eps)\log n$, we have $k_0=1$ for sufficiently large $n$.
  Thus, \whp{}, $\iso(G)=\iso(G;1)\ge \kappa n p$.
\end{proof}

An \defn{equipartition} $X_1,\dots,X_m$ of a set $X$
is a partition of $X$ into $m$ parts of size $\lceil |X|/m \rceil$ or $\lfloor |X|/m\rfloor$ each. For convenience, we assume $|X_1|\ge \cdots \ge |X_m|$.

\begin{proof}[Proof of \cref{thm:gnp}]
    Let $G\sim G(n,f/n)$, where $f=np$. For every integer $k$,  the probability that there are no edges in $G$ between two fixed disjoint sets $X$ and $Y$ of size $k$ each is $(1-f/n)^{k^2}\le e^{-f k^2/n}$. Let $k=\lceil \beta n/f\rceil$ where $\beta>0$ is some large enough constant to be chosen later, and let $m=\lceil \alpha f\rceil$, where $\alpha= 1 /(56 \beta)$. Let $V_1,\ldots, V_m$ be an arbitrary fixed equipartition of $[n]$.

    First, note that, for every $1\le i\le m$,
    \[
        |V_i|\ge \lfloor n/m\rfloor \ge n/(2m) \ge n/(4\alpha f) = 14 \beta n/f \ge 7 k.
    \]

    Let $G_0=([m],E')$, where
  \[
        E'= \left\{ \{i,j\}\in\binom{[m]}{2} :\, \text{for every $X\subseteq V_i$, $Y\subseteq V_j$ such that $|X|=|Y|= k$, $E_G(X,Y)\ne \es$} \right\}.
    \]
    For $1\le i<j\le m$, let $X_{ij}$ be the event that $\{i,j\}\in E'$. Note that the events $X_{ij}$ are mutually independent, as the events depend on pairwise disjoint sets of edges of $G$. Moreover, by the union bound,
    \begin{align*}
      1-\pr(X_{ij}) &\le \binom{\lceil n/m\rceil}{k}^2  e^{-f k^2/n}
        \le \left(\frac{ e }{k }\left\lceil\frac{n}{m}\right\rceil
        \right)^{2k} \cdot e^{-f k^2/n}
        \le \left(\frac{ 2e n }{m k }
        \right)^{2k} \cdot e^{-f k^2/n}
        \\ &
        =\left(\frac{ 2e n }{m k }
         \cdot e^{-f k/(2n)}\right)^{2k}
        \le
        \left( \frac{2e}{\alpha \beta} e^{-\beta/2} \right)^{2k} \le (305 e^{-\beta/2})^{2k}.
    \end{align*}
    Choosing $\beta= 17$, we obtain $305 e^{-\beta/2}\le 1/10$, and therefore
    \[
        1-\pr(X_{ij})\le (1/10)^{2k}\le 1/100.
    \]
    That is, for all $1\le i<j\le m$,
    \[
        \pr(X_{ij})\ge 99/100.
    \]
    Hence, for every $i\in[m]$, the degree of $i$ in $G_0$ stochastically dominates a binomial random variable with $m-1$ attempts and success probability $99/100$.
    By Chernoff bounds (\cref{thm:chernoff}),
  \[
  \pr(\deg_{G_0}(i)<98 m/100)=e^{-\Theta(m)}.
  \]
  By the union bound over all $i\in[m]$, we obtain that $\delta(G_0)\ge 98m/100$ \whp{}. So, by \cref{thm:connector_partition} (with $\eta=0.98)$, \whp{} there exists $U\subseteq [n]$ with
  \[
  |U|\ge n-m\cdot 4k\ge n- 16 \alpha \beta n > 0.7 n,
  \]
  such that $G[U]$ is $d'$-rigid for
  \[
  d'= \lfloor (98/100-1/2) m\rfloor \ge 0.24 \alpha f \ge f/4000.
  \]

  By \cref{lemma:gnp_expansion}, there exists $\kappa=\kappa(\eps)\in(0,1)$ such that $\iso(G)\ge \kappa f$ \whp{}.
Let us choose $c=\min\{1/4000,\kappa\}$, and let $d=\lfloor c f\rfloor$. Note that $\iso(G)\ge d$, and, since $d\le d'$, $G[U]$ is $d$-rigid. Hence, by \cref{lemma:absorption} (applied for $k=1$), $G$ is $d$-rigid, as wanted.
\end{proof}

\section{Giant rigid components in $G(n,p)$ and rigidity of random regular graphs}\label{sec:sparse}

In this section, we study the rigidity of sparse random graphs, proving \cref{thm:rigid_component_gnp:2,thm:random_regular}.  We will need the next result, which follows as an application of \cref{thm:connector_partition}.

\begin{proposition}\label{thm:rigid_component}
   Let $\mathcal{G}$ be a distribution of graphs on vertex set $[n]$, and let $G\sim \mathcal{G}$.  Let $0<f=o(n/\log{n})$, and  assume that for every $K\ge 1$ and every fixed pair of disjoint sets $X$ and $Y$ of size $|X|=|Y|=K$,
   \[
   \pr(E_G(X,Y)=\es)\le \exp(-f K^2/n).
   \]
   Let $0<\eps<4/7$, and assume that $f\ge 16 \ln{(25/\eps})/\eps$.
   Then, \whp{}, there exists $U\subseteq[n]$, $|U|\ge (1-\eps)n$, such that $G[U]$ is $d$-rigid for $d=\lfloor \eps f / (32 \log{(25/\eps)})\rfloor$.
\end{proposition}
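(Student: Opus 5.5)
The plan is to apply \cref{thm:connector_partition} in the special case $\eta=1$, where the graph $G_0$ must be complete. By the remark following its proof, this case does not need \cref{thm:JLV}. It then yields a $\lfloor m/2\rfloor$-rigid induced subgraph missing at most $4k$ vertices from each part, so at most $4km$ vertices in total. The whole argument is therefore a choice of $m$ and $k$, followed by a union bound showing that \whp{} every pair $\{i,j\}$ lies in $E'$.

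\textbf{Parameters.} Write $L=\log(25/\eps)$ and choose
\[
m=\left\lfloor \frac{\eps f}{16L}\right\rfloor, \qquad k=\left\lceil \frac{4Ln}{f}\right\rceil .
\]
The hypothesis $f\ge 16L/\eps$ gives $m\ge 1$, and then $\lfloor m/2\rfloor\ge\lfloor \eps f/(32L)\rfloor=d$. Take $V_1,\dots,V_m$ to be a fixed equipartition of $[n]$. We need three estimates, each up to rounding.
\begin{itemize}
\item $4km\lesssim 4\cdot\frac{4Ln}{f}\cdot\frac{\eps f}{16L}=\eps n$. The ceiling in $k$ costs at most $4m=O(f)=o(n)$, which can be absorbed by shrinking constants slightly, or by choosing $m$ a touch smaller.
\item $|V_i|\ge\lfloor n/m\rfloor\approx 4k/\eps>7k$. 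This uses $\eps<4/7$, strictly, with room for rounding since $k\to\infty$.
\item $k=\Theta(Ln/f)=\omega(\log n)$. This uses $f=o(n/\log n)$ and is what makes the union bound work.
\end{itemize}

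\textbf{Union bound.} Fix $i<j$. The probability that some pair $X\subseteq V_i$, $Y\subseteq V_j$ with $|X|=|Y|=k$ has $E_G(X,Y)=\es$ is at most
\[
\binom{\lceil n/m\rceil}{k}^2 e^{-fk^2/n}\le \left(\frac{e\lceil n/m\rceil}{k}\, e^{-fk/(2n)}\right)^{2k}.
\]
Here $e\lceil n/m\rceil/k\le 2e\cdot 4/\eps\cdot(1+o(1))$ and $e^{-fk/(2n)}\le e^{-2L}=(\eps/25)^2$. Hence the base is at most $8e\eps/625<1/2$. Summing over the at most $m^2\le n^2$ pairs $\{i,j\}$ gives failure probability at most $n^2 2^{-2k}=o(1)$, since $k=\omega(\log n)$.

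\textbf{Conclusion.} Thus \whp{} $G_0=K_m$, so $\delta(G_0)=m-1$ and $\eta=1$. \Cref{thm:connector_partition} then gives $W$ with $G[W]$ being $\lfloor m/2\rfloor$-rigid, hence $d$-rigid, and $|W|\ge n-4km\ge(1-\eps)n$. The only delicate point is the rounding: making sure that the ceiling in $k$ and the floors in $m$ and $\lfloor n/m\rfloor$ still leave both $4km\le\eps n$ and $|V_i|>7k-3$. I expect this to go through, because $k\to\infty$ and $f=o(n)$ make all rounding errors of lower order. If it is tight at the extreme values, I would instead take $k=\lceil 4Ln/f\rceil$ and define $m=\lfloor \eps n/(4k)\rfloor$ directly, then check $\lfloor m/2\rfloor\ge d$ using $f\ge16L/\eps$.
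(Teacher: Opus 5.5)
Your proposal is correct and is essentially the paper's proof. Both use a fixed equipartition into $m\approx \eps f/(16\log(25/\eps))$ parts, a union bound over pairs of $k$-sets (with $k=\omega(\log n)$) showing that $G_0$ is complete with high probability, and then \cref{thm:connector_partition} with $\eta=1$. The only difference is bookkeeping. The paper takes $m=\lceil \eps f/(16\log(25/\eps))\rceil$ and $k=\lceil 2\log(25/\eps)\,n/f\rceil$, and absorbs rounding via $m<2\alpha f$. You round $m$ down, so that $\lfloor m/2\rfloor=d$ exactly, and compensate with a larger $k$.

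On the rounding point you flagged: with your literal choices, $4km$ can exceed $\eps n$ by up to $4m$. The fix that always works is to shrink the constant in $k$, for example $k=\lceil 3\log(25/\eps)\,n/f\rceil$. With this $k$:
\begin{itemize}
\item the base in the union bound is still below $1/2$;
\item $|V_i|>7k-3$ still holds, since $\eps<4/7$ gives $16/\eps>21$;
\item $4km\le \tfrac34\eps n+O(f)\le \eps n$.
\end{itemize}
Your two other suggestions, lowering $m$ or setting $m=\lfloor \eps n/(4k)\rfloor$, can both give $\lfloor m/2\rfloor<d$ when $\eps f/(32\log(25/\eps))$ is an integer, so they should not be used.
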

\begin{proof}
Let $k=\lceil\beta n/f\rceil$, where $\beta=2 \ln{(25/\eps)}$.
    Let $m= \lceil\alpha f\rceil$, where $\alpha= \eps/(8\beta)$.
    Let $V_1,\ldots,V_m$ be an arbitrary fixed equipartition of $[n]$ (that is, each part $V_i$ has size $\lfloor n/m \rfloor$ or $\lceil n/m\rceil$).

    Let $G\sim \mathcal{G}$. Let $G_0=([m],E')$, where
    \[
        E'= \left\{ \{i,j\}\in\binom{[m]}{2} :\, \text{for every $X\subseteq V_i$, $Y\subseteq V_j$ such that $|X|=|Y|= k$, $E_G(X,Y)\ne \es$} \right\}.
    \]
    By assumption, the probability that there are no edges in $G$ between two fixed disjoint sets $X$ and $Y$ of size $k$ each is at most $\exp(-f k^2/n)$.
    By the union bound over all $1\le i<j\le m$ and all choices of subsets $X\subseteq V_i$, $Y\subseteq V_j$ with $|X|=|Y|=k$, the probability $p'$ that $G_0$ is not the complete graph satisfies
    \begin{align*}
        &p'\le m^2 \binom{\lceil n/m\rceil}{k}^2  e^{-f k^2/n}
        \le m^2 \left(\frac{ e }{k }\left\lceil\frac{n}{m}\right\rceil
        \right)^{2k} \cdot e^{-f k^2/n}
        \le n^2 \left(\frac{ 3 n }{m k }
        \right)^{2k} \cdot e^{-f k^2/n}
        \\ &
        =n^2 \left(\frac{ 3 n }{m k }
         \cdot e^{-f k/(2n)}\right)^{2k}
        \le
        n^2 \left( \frac{3}{\alpha \beta} e^{-\beta/2} \right)^{2k}.
    \end{align*}
    Since $\alpha=\eps/(8\beta)$ and $\beta> 2 \ln(24/\eps)$, we have
    \begin{equation}\label{eq:ineq1}
    3e^{-\beta/2}/(\alpha \beta)=  24 e^{-\beta/2}/\eps<1,
    \end{equation}
    and therefore (using the fact that $k=\Theta( n/f)=\omega(\log{n})$), we obtain that $p'\to 0$ as $n\to \infty$. That is, \whp{}, $G$ is complete (and in particular, it has minimum degree equal to $\eta m-1$, for $\eta=1$). In order to apply \cref{thm:connector_partition}, we are left to show that $\lfloor n/ m \rfloor > 7k-3$. Indeed, since $\alpha\beta=\eps/8$ and $\eps<4/7$, and using the fact that $\alpha f\ge 1$ (as $f \ge 16\ln{(25/\eps}) = 1/\alpha$), we obtain, for large enough $n$,
    \[
\lfloor n/m\rfloor \ge n/m-1 \ge n/(2\alpha f) -1 = 4n\beta/(\eps f)-1 > 7k-1,
    \]
    as required.

    Thus, by \cref{thm:connector_partition}, there is a set $U\subseteq[n]$ of size $|U|\ge n-m\cdot 4k$ such that $G[U]$ is $d'$-rigid for $d'=\lfloor m/2\rfloor  \ge \lfloor \eps f/ (32 \log{(25/\eps)})\rfloor=d$.
    Since $m< 2\alpha f$ as $\alpha f\ge 1$, we obtain, for large enough $n$, $4mk< 4\cdot (2\alpha f)(\beta n/f)=8\alpha \beta n$, and therefore
    \[
    |U|\ge n-m\cdot 4k \ge n-8\alpha \beta n = (1-\eps)n,
    \]
    as wanted.
\end{proof}

\begin{proof}[Proof of \cref{thm:rigid_component_gnp:2}]

Let $100\le f \le 2\log{n}$, and let $G\sim G(n,f/n)$.
Note that, for every $k\ge 1$, the probability that there are no edges in $G$ between two fixed disjoint sets $X$ and $Y$, each of size $k$, is $(1-f/n)^{k^2}\le e^{-f k^2/n}$. Let $\eps=1/2$,  $c=1/251 \le \eps/(32\log{(25/\eps}))$, and $d=\lfloor c f\rfloor$. Note that $f\ge 16 \ln(25/\eps)/\eps$. Hence, by \cref{thm:rigid_component}, there exists \whp{} $U\subseteq[n]$ with $|U|\ge n/2$, such that $G[U]$ is $d$-rigid. By \cref{lemma:gnp_expansion_general}, $G$ satisfies, \whp{},  $\iso(G;k_0)\ge f/30\ge d$, for $k_0=\lfloor n e^{-f/2}\rfloor$.
Therefore, by \cref{lemma:absorption}, there exists $W\subseteq V$ with $|W|\ge n-k_0\ge (1-e^{-f/2})n$, such that $G[W]$ is $d$-rigid.
\end{proof}

In order to prove \cref{thm:random_regular}, we will need the next auxiliary results.
The following lemma is implicit in the proof of~\cite[Lemma 4.9]{KLM25}.
\begin{lemma}\label{lemma:gnr_connector}
Let $G\sim G_{n,r}$, and let $A,B\subseteq[n]$ with $A\cap B=\es$, $|A|=|B|=S$. Then,
\[
\pr(E_G(A,B)=\es) \le (1-S/n)^{r S/2}\le \exp(-r S^2/(2n)).
\]
\end{lemma}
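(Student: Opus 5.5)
We work in the configuration model, in the form used in the proof of \cite[Lemma 4.9]{KLM25}. Let $P$ be a uniformly random perfect matching on the $nr$ points, where each vertex $v\in[n]$ owns a cell of $r$ points. It is classical that, conditioned on the event that the resulting multigraph is simple, $P$ induces the uniform distribution $G_{n,r}$. For fixed $r$ the probability of simplicity is bounded below by a positive constant, but this only yields a bound up to a constant factor. So rather than conditioning, we argue directly with switchings on simple $r$-regular graphs, or else we reveal edges sequentially inside $G_{n,r}$ itself. The cleanest route, and the one I would attempt first, is a sequential exposure argument for a uniform simple $r$-regular graph. Let $A=\{a_1,\dots,a_S\}$. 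We reveal the neighbourhoods of the vertices of $A$ one at a time, and we bound the conditional probability that the next revealed neighbour lies outside $B$.

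Concretely, we want to show the following: given any history in which no edge between $A$ and $B$ has yet appeared, a suitably chosen not-yet-revealed edge incident to $A$ lands outside $B$ with conditional probability at most $1-S/n$. The factor $rS/2$ in the exponent suggests that we should expose only about $rS/2$ of the edges incident to $A$. This accounts for edges inside $A$, which are counted twice: $A$ has $rS$ edge-ends, so at least $rS/2$ distinct edges touch $A$. For each exposed edge we need the bound $\Pr(\text{other endpoint} \notin B \mid \text{history}) \le 1-|B|/n$. To obtain it, I would use a switching/symmetry argument. Take a graph $G$ in which a given edge $au$ has $u\notin B$, and a vertex $b\in B$ that is not yet involved. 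Switch $au, bw$ to $ab, uw$, for a suitable edge $bw$. This gives a comparison between the number of graphs with $a$'s exposed neighbour in $B$ and the number with it outside $B$.

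Alternatively, and perhaps closer to what the authors do, we can use the symmetry of $G_{n,r}$ under vertex permutations. Consider the number $N$ of edges between $A$ and $B$, and try to prove $\Pr(N=0)\le \prod(1-S/n)$ via a product of conditional probabilities. The cleanest formulation is to choose $B$ uniformly at random among $S$-subsets of $[n]\setminus A$, with $G$ fixed, since by symmetry $\Pr(E_G(A,B)=\es)$ does not depend on $B$. Then $\Pr(E(A,B)=\es)=\Pr(B\cap N(A)=\es)$. Since $|N(A)\setminus A|\ge$ some quantity $t$, we get
\[
\Pr\bigl(B\cap N(A)=\es\bigr)\le \binom{n-S-t}{S}\Big/\binom{n-S}{S}\le (1-t/n)^S .
\]
This route requires $|N(A)\setminus A|\ge rS/2$ or similar. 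Alternatively, we may symmetrize over both sets, i.e.\ average over $A$ too, which is more robust. We pick $A$ and $B$ as a uniformly random disjoint pair, and we aim for a bound of the form $(1-S/n)^{\text{(number of edges touching }A)}$, obtained by exposing the $B$-membership of the endpoints one at a time. Each edge incident to $A$ has its other endpoint outside $A$ with some count $e(A,\bar A)$. If $e(A,\bar A)\ge rS/2$, then, since $B$ is random of size $S$ in a set of size $n-S$, the probability of avoiding all these endpoints is at most $(1-S/n)^{|N(A)\setminus A|}$. The obstacle is that distinct edges may share endpoints, so that $|N(A)\setminus A|$ may be much smaller than $e(A,\bar A)$. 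We therefore need a separate argument for $e(A,A)$ large, namely the case where many edges are internal. This case can be handled by noting that when $|N(A)\setminus A|$ is small, $A\cup N(A)$ is dense, which is rare. The main obstacle is exactly this step: getting the exact exponent $rS/2$ without losing constants. I expect the KLM25 argument to handle it via the configuration-model pairing, with points matched sequentially so that each of at least $rS/2$ pairings from $A$'s points avoids $B$'s $rS$ points with probability at most $1-S/n$. This gives the first inequality, and $1-x\le e^{-x}$ then yields the second.
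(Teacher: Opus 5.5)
\textbf{There is a genuine gap: the final step you rely on is proved only for the pairing multigraph, not for uniform simple $r$-regular graphs.}

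Your last paragraph does reach the right computation. The paper gives no proof, only the remark that the lemma is implicit in the proof of \cite[Lemma 4.9]{KLM25}, and $(1-S/n)^{rS/2}$ is exactly what the pairing computation produces. In the configuration model, repeatedly reveal the partner of an arbitrary unmatched point of $A$. While no $A$--$B$ pair has appeared, all $rS$ points of $B$ remain unmatched. So at step $j$ the partner lies in $B$ with conditional probability $rS/(rn-2j-1)\ge S/n$. Each step uses at most two points of $A$, so at least $rS/2$ steps occur.

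The gap is the one you flag in your first paragraph and then drop. $G_{n,r}$ is the pairing conditioned on being simple, so for fixed $r$ you only get
\[
\pr\bigl(E_G(A,B)=\es\bigr)\le \frac{(1-S/n)^{rS/2}}{\pr(\text{pairing is simple})}=(1+o(1))\,e^{(r^2-1)/4}\,(1-S/n)^{rS/2}.
\]
Your closing ``this gives the first inequality'' is therefore not justified for the statement as written. None of your other routes fills the gap:
\begin{itemize}
\item The switching comparison is never set up. Once excluded switchings are counted (the new pairs must be non-edges and must not disturb the revealed history), it does not obviously give a per-step factor of exactly $1-S/n$.
\item The symmetrization over $B$ gives $\pr(E_G(A,B)=\es)=\E\bigl[\binom{n-S-t}{S}\big/\binom{n-S}{S}\bigr]$ with $t=|N_G(A)|$. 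Bounding this by $(1-S/n)^{t}$ needs $t\ge rS/2$. That fails deterministically, for instance when $A$ is a union of components. Where the lemma is applied ($S\approx 4\ln(50)\,n/r$, so $rS/2\approx 2\ln(50)\,n>n>t$) it is impossible. So this route cannot reach the target exponent, and the ``dense sets are rare'' patch is never quantified.
\end{itemize}

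The missing constant is harmless for how the lemma is used. In \cref{thm:random_regular} $r$ is fixed. In \cref{thm:rigid_component} the bound only enters a union bound $p'\le n^2(\cdots)^{2k}\to 0$, which absorbs an extra factor $O_r(1)$. So the honest way to finish is to prove the lemma with that factor, and let the hypothesis of \cref{thm:rigid_component} carry it. Equivalently, run the union bound in the pairing model and transfer the \whp{} conclusion, using that the pairing is simple with probability bounded away from $0$. If you want the inequality exactly as stated for uniform simple $r$-regular graphs, you need a genuinely simple-graph argument, and your proposal does not contain one.
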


We will also need the following result of Bollob\'as.

\begin{lemma}[Bollob\'as~\cite{bollobas1988isoperimetric}]\label{lemma:bollobas}
    Fix $r\ge 3$, and let $\eta\in(0,1)$ such that
    \[
        2^{4/r}<(1-\eta)^{1-\eta} (1+\eta)^{1+\eta}.
    \]
    Let $G\sim G_{n,r}$. Then, \whp{}, $\iso(G)\ge (1-\eta)r/2$.
\end{lemma}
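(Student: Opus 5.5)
The plan is a first-moment computation in the configuration (pairing) model. For fixed $r\ge 3$, a uniform random pairing of $rn$ points (with $r$ points per vertex) yields a simple graph with probability bounded below by a positive constant depending only on $r$. Conditioned on simplicity, the resulting graph is distributed as $G_{n,r}$. It therefore suffices to show that in the pairing model the event
\[
\text{there is } U \text{ with } 1\le |U|=k\le n/2 \text{ and } |\partial U|<(1-\eta)rk/2
\]
has probability $o(1)$.

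For fixed $U$ with $|U|=k$ and a fixed crossing number $j$, I count pairings exactly. Choose the $j$ points in $U$ and the $j$ points outside $U$ that are matched across, and match them in $j!$ ways. Then perfectly match the remaining $rk-j$ points of $U$ among themselves, and likewise the remaining $r(n-k)-j$ outside points. Writing $M(2t)=(2t)!/(t!\,2^t)$ for the number of perfect matchings on $2t$ points, the probability is
\[
\frac{\binom{rk}{j}\binom{r(n-k)}{j}\, j!\, M(rk-j)\, M(r(n-k)-j)}{M(rn)}
\]
(zero unless $rk-j$ is even). I then take a union bound: multiply by $\binom nk$ and sum over $j<(1-\eta)rk/2$ and $1\le k\le n/2$. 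The summand in $j$ is increasing for $j$ well below its mean $\approx rk(n-k)/n\ge rk/2$, so the $j$-sum is dominated, up to a polynomial factor, by its largest term $j_0=\lfloor(1-\eta)rk/2\rfloor$.

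The estimate splits into two ranges of $k$. For $k\le \delta n$ with $\delta=\delta(r,\eta)$ small, I argue directly. A crossing count below $(1-\eta)rk/2$ means $U$ spans more than $(1+\eta)rk/4>k$ edges, a dense small set. The standard bound $\binom nk\binom{rk}{2e}(rk/rn)^{e}$ for $e=\lceil(1+\eta)rk/4\rceil$ sums to $o(1)$ over $k\le\delta n$, since $e\ge (1+\eta)k\cdot 3/4>k$. The smallest $k$ (say $k\le \log n$) needs slight care but gives $O(n^{-c})$. For $\delta n\le k\le n/2$, I write $k=xn$, $j=yrn$ and use Stirling to obtain an exponential rate $n\,\Phi(x,y)$. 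I then show that $\Phi(x,(1-\eta)x/2)<0$ on $[\delta,1/2]$.

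At $x=1/2$, $\binom n{n/2}\approx 2^n$, while the pairing probability is approximately
\[
\exp\!\left(-\tfrac{rn}{4}\bigl[(1+\eta)\log(1+\eta)+(1-\eta)\log(1-\eta)\bigr]\right)
\]
(the cost of pushing the crossing proportion from its mean $1/2$ down to $(1-\eta)/2$). So the rate is negative exactly when $2^{4/r}<(1-\eta)^{1-\eta}(1+\eta)^{1+\eta}$, which is the hypothesis. Since the inequality is strict, a small slack persists, and this slack absorbs the polynomial factors and the neighbourhood of $x=1/2$.

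The main obstacle I expect is showing that $x=1/2$ is the worst case on the whole interval $[\delta,1/2]$, i.e. the monotonicity in $x$ of $\Phi(x,(1-\eta)x/2)$. My first attempt would be to differentiate $\Phi$ in $x$, check the sign directly, and verify that the entropy gain $H(x)$ grows more slowly than the large-deviation cost, which scales like $x$ times a factor increasing in $x$. Failing that, I would bound $\Phi$ separately on a few subintervals using convexity of the entropy terms.
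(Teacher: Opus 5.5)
The paper gives no proof of this lemma; it is quoted from Bollob\'as. Your plan is essentially the standard argument for it: a first-moment count over sets $U$ in the pairing model, transferred to $G_{n,r}$ via the positive probability of simplicity. Your computation at $|U|=n/2$ is correct, and it is exactly where the hypothesis $2^{4/r}<(1-\eta)^{1-\eta}(1+\eta)^{1+\eta}$ comes from.

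The small-$k$ range also works, with one correction. The inequality you need there is $\alpha:=(1+\eta)r/4>1$, not $(1+\eta)\cdot 3/4>1$; the latter fails for large $r$ and small $\eta$. In fact $\alpha\ge 1+\eta$ when $r\ge 4$, and for $r=3$ the hypothesis forces $\eta>0.87$.

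The step you flag as the main obstacle is mis-framed. Write $\Phi(x)$ for $\Phi(x,(1-\eta)x/2)$. This function is not monotone on $[\delta,1/2]$, and $x=1/2$ need not be its worst point there. Indeed $\Phi(x)=(\alpha-1)x\log x+O(x)$ as $x\to 0$, so $\Phi$ first decreases from $0$, and $\Phi(\delta)>\Phi(1/2)$ once $\delta$ is small. So differentiating and hoping for a one-signed derivative will not work.

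What is true, and suffices, is $\max_{[\delta,1/2]}\Phi=\max\{\Phi(\delta),\Phi(1/2)\}$, with both values negative. To see this, put $\theta=(1-\eta)/2$, $s=1+\theta$ and $H(x)=-x\log x-(1-x)\log(1-x)$. Then
\[
\Phi(x)=H(x)+r\Bigl[x\log x+(1-x)\log(1-x)-\theta x\log(\theta x)-\tfrac{(1-\theta)x}{2}\log\bigl((1-\theta)x\bigr)-\tfrac{1-sx}{2}\log(1-sx)\Bigr],
\]
\[
\Phi''(x)=\frac{\alpha-1}{x}+\frac{r-1}{1-x}-\frac{rs^2}{2(1-sx)}.
\]
Multiply $\Phi''$ by $x(1-x)(1-sx)>0$. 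The $x^2$-terms cancel, leaving a linear function of $x$ that equals $\alpha-1>0$ at $x=0$. Hence $\Phi$ is convex and then (possibly) concave on $(0,1/2]$.

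Moreover $\Phi'(1/2)=r\theta\log\frac{1-\theta}{\theta}>0$. On the concave part $\Phi'$ is nonincreasing, so $\Phi'\ge\Phi'(1/2)>0$ there and $\Phi$ is increasing. On the convex part the maximum is at an endpoint.

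This gives the claim. $\Phi(\delta)<0$ follows from the $x\log x$ asymptotics. $\Phi(1/2)=\log 2-\frac r4\bigl[(1+\eta)\log(1+\eta)+(1-\eta)\log(1-\eta)\bigr]<0$ is exactly the hypothesis.
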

Note that, for $r\ge 45$, we may take $\eta=1/4$ in \cref{lemma:bollobas}.

\begin{proof}[Proof of \cref{thm:random_regular}]
    Let $d\ge 2$ be fixed, let $r\ge 501 d$, and let $G\sim G_{n,r}$.
    By \cref{lemma:gnr_connector}, we may apply \cref{thm:rigid_component} with parameters $f=r/2$ and $\eps=1/2$ (noting that $f \ge 16 \log(25/\eps)/\eps= 32\log{50}$ in this case). So, by \cref{thm:rigid_component}, \whp{} there exists $U\subseteq [n]$, $|U|\ge n/2$, such that $G[U]$ is $d'$-rigid for $d'=\lfloor \eps f/(32\log{(25/\eps)})\rfloor
    =\lfloor r/(128 \log{50})\rfloor\ge d$. That is, $G[U]$ is $d$-rigid. By \cref{lemma:bollobas}, \whp{} $\iso(G)\ge 3r/8\ge d$. Hence, by
    \cref{lemma:absorption} (applied for $k=1$), $G$ is $d$-rigid.
\end{proof}

\section{Minimum codegree conditions for rigidity}\label{sec:codegree}

In this section, we prove \cref{thm:codegree}.
For a graph $G=(V,E)$,
a partition $\pi=(V_1,\dots,V_m)$ of $V$,
and a pair of vertices $u,v\in V$,
denote $M_\pi(u,v)=\{i\in[m]:N_G(u)\cap N_G(v)\cap V_i\ne\es\}$.
That is, $M_\pi(u,v)$ is the set of indices $i$ for which $u,v$ have a common neighbour in $V_i$.
Write $\delta_2(\pi)=\min_{u,v\in V}|M_\pi(u,v)|$.
Define the \defn{reduced graph} $G_\pi=([m],E_\pi)$,
where $\{i,j\}\in E_\pi$ if and only if $E_G(V_i,V_j)\ne\es$.
Note for later that for each $\pi$,
$\delta(G_\pi)\ge\delta_2(G_\pi)\ge\delta_2(\pi)-2$.

\begin{lemma}\label{lem:codegree}
  For every $\eps\in(0,1)$, $c\in(0,1/(4\log(3/\eps)))$, and $C>8/\eps$,
  the following holds.
  Let $n\ge 1$, $k\ge C\log{n}$, and $m=\floor{ck}$.
  Let $G=(V,E)$ be an $n$-vertex graph with $\delta_2(G)\ge k$,
  and let $\pi$ be a uniform random equipartition of $V$ into $m$ parts.
  Then, \whp{}, $\delta_2(\pi)\ge(1-\eps)m$.
\end{lemma}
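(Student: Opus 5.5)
Fix a pair $u\ne v$ and let $S=N_G(u)\cap N_G(v)$. We have $|S|\ge k$; by passing to a subset we may assume $|S|=k$ exactly. The event $|M_\pi(u,v)|<(1-\eps)m$ says that more than $\eps m$ parts of $\pi$ miss $S$ entirely. We will show that for a fixed pair this has probability $o(n^{-2})$, and then take a union bound over the at most $n^2$ pairs.

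For a fixed set $I\subseteq[m]$ with $|I|=t:=\lceil\eps m\rceil$, let $Q=\bigcup_{i\in I}V_i$. Since $\pi$ is a uniform equipartition, $Q$ is a uniformly random set of size $q\ge t\lfloor n/m\rfloor$. More precisely, under a uniform random labelling of $V$, $Q$ is a fixed-size uniformly random subset, up to the ordering convention; we will handle this by conditioning on the sizes of the parts, which are deterministic. Then
\[
\pr(S\cap Q=\es)=\binom{n-k}{q}\Big/\binom{n}{q}\le (1-q/n)^k\le \exp(-kq/n).
\]
Now $q/n\ge t/m-t/n\ge \eps-o(1)$, so this probability is at most $\exp(-(\eps-o(1))k)$. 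Here we use $m\le ck=o(n)$, which is harmless: if $k$ is comparable to $n$, then $m\le n$ still holds and we use $\lfloor n/m\rfloor\ge n/(2m)$ instead, losing a factor of $2$ that the constants must absorb. A union bound over the $\binom{m}{t}\le (em/t)^t\le (3/\eps)^{\eps m}$ choices of $I$ then gives
\[
\pr\bigl(|M_\pi(u,v)|<(1-\eps)m\bigr)\le \exp\bigl(\eps m\log(3/\eps)-\eps k+o(k)\bigr).
\]
Since $m\le ck$ and $c\log(3/\eps)<1/4$, the exponent is at most $-(3/4)\eps k+o(k)$. As $k\ge C\log n$ with $C>8/\eps$, this is at most $-6\log n$ for large $n$, so the bound is $n^{-6}$. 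Summing over the fewer than $n^2$ pairs gives $o(1)$.

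The main obstacle is keeping the constants honest, in two places.

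\begin{itemize}
\item \emph{Rounding.} Because $t=\lceil\eps m\rceil$, the inequality $\lceil\eps m\rceil/m\ge\eps$ goes in the favourable direction, which helps.
\item \emph{Part sizes.} The estimate of $q/n$ from below by roughly $\eps$ needs part sizes close to $n/m$, which holds for an equipartition with $m\le n$. If $m>n$ the statement is vacuous or degenerate. If $m$ is close to $n$, the parts have size $1$ or $2$ and $q\ge t$; then $\exp(-kt/n)$ could be weak.
\end{itemize}

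In the second case, I would instead use the bound $\lfloor n/m\rfloor\ge 1$ directly together with $k\le n$, or simply replace $\eps$ by $\eps/2$ and use the slack between $1/4$ and the available constants. I would first try the crude estimate $q\ge tn/(2m)$ and check that the slack ($C>8/\eps$ against the needed $2\cdot 2/\eps$, and $1/4$ against $1/2$) absorbs the factor of $2$.
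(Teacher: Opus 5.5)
Your proposal follows the same route as the paper: bound the probability that a fixed set of $t$ parts misses the common neighbourhood by the hypergeometric estimate $(1-q/n)^k$, take a union bound over the $\binom{m}{t}$ sets of parts, and then a union bound over pairs.

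\textbf{The main line fails as written.} The step $q/n\ge\eps-o(1)$ needs $t/n=o(1)$, i.e.\ $m=o(n)$. But $m=\lfloor ck\rfloor$, and $k$ may be linear in $n$.

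\textbf{Your fallback closes the proof, with exactly the given constants.} The bound $\lfloor n/m\rfloor\ge n/(2m)$ is precisely the estimate the paper uses. It gives $q\ge\eps n/2$, hence $e^{-\eps k/2}$ for each set of parts. After the union over sets of parts this becomes $\exp\bigl(\eps k(c\log(3/\eps)-1/2)\bigr)\le e^{-\eps k/4}$. The union over pairs then gives $n^{2-C\eps/4}=o(1)$.

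Your slack accounting is off, though: the two margins you list are not independent. The gap between $1/4$ and $1/2$ is used up by the union over sets of parts. After that, $C>8/\eps$ is exactly what the final union over pairs requires; there is no factor-$2$ cushion left in $C$.

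\textbf{Alternative repair of the finer bound.} Since $k\le n-2$, the error term satisfies $kt/n\le t\le\eps ck+1$. So the per-set bound is still $\exp(-(1-c)\eps k+1)\le e\cdot e^{-3\eps k/4}$, using $c<1/4$.

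\textbf{Minor points.} With $t=\lceil\eps m\rceil$ you only get $\binom{m}{t}\le(e/\eps)^{t}\le(e/\eps)^{\eps m+1}$, not $(3/\eps)^{\eps m}$. This costs an irrelevant constant factor. Your ceiling choice actually handles rounding more cleanly than the paper, which takes $t=\lfloor\eps m\rfloor$ but applies the bound derived for $|S|\ge\eps m$. Finally, the degenerate cases you worry about ($m>n$, or parts of size $1$ or $2$) cannot occur, since $m\le ck<k/4<n/4$.
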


\begin{proof}
  Write $n_i=\sum_{j=1}^i|V_j|$ for $i=0,\dots,m$
  (recalling that $|V_1|\ge\dots\ge|V_m|$ are deterministic).
  We generate $\pi$
  by sampling a uniform random permutation of $V$
  and putting each vertex whose location lies in $[n_{i-1}+1,n_i]$ into $V_i$.
  For every $S\subseteq[m]$ denote $V_S=\bigcup_{i\in S}V_i$.
  For every $u,v\in V$
  let $\cM_S(u,v)$ be the event that $M_\pi(u,v)\cap S=\es$.
  Recall that we denote by $d_2(u,v)$ the number of common neighbours of $u$ and $v$.
  Then,
  \[\begin{aligned}
    \pr(\cM_S(u,v))
    &= \frac{\binom{n-|V_S|}{d_2(u,v)}}{\binom{n}{d_2(u,v)}}
    = \prod_{t=0}^{d_2(u,v)-1} \frac{n-|V_S|-t}{n-t}\\
    &\le \prod_{t=0}^{d_2(u,v)-1} \frac{n-|V_S|}{n}
    = \left(1-\frac{|V_S|}{n}\right)^{d_2(u,v)}.
  \end{aligned}\]
  In particular, if $|S|\ge\eps m$,
  we have $|V_S|\ge\eps n/2$,
  hence
  \[
    \pr(\cM_S(u,v))
    \le (1-\eps/2)^k\le\exp(-\eps k/2).
  \]
  Let $\cM(u,v)$ be the event that $\cM_S(u,v)$
  holds for some $S\subseteq[m]$ with $|S|=t:=\floor{\eps m}$.
  By the union bound,
  \[\begin{aligned}
    \pr\left(\cM(u,v)\right)
    &\le \binom{m}{t}\exp(-\eps k/2)\le \left(\frac{e m }{t}\right)^{t} \exp(-\eps k/2)
    \\
    &\le \exp\left(\eps m\log\left(\frac{3}{\eps}\right)-\eps k/2\right)
    \le \exp\left(\eps k\left(c\log\left(\frac{3}{\eps}\right)-\frac{1}{2}\right)\right).
  \end{aligned}\]

  By assumption,
  $c\log(3/\eps)\le 1/4$.
  Thus, we get $\pr(\cM(u,v))\le\exp(-\eps k/4)$.
  Let $\cM$ be the event that $\cM(u,v)$ holds for some $u,v\in V$.
  By the union bound over all such pairs $u,v$, we have
  \[
    \pr(\cM)
    \le \binom{n}{2}\exp(-\eps k/4)
    \le n^{2-C\eps /4}.
  \]
  Since $C>8/\eps$,
  we obtain $\pr(\cM)=o(1)$.
  The statement follows by noticing that $\neg\cM(u,v)$ implies
  $|M_\pi(u,v)|\ge(1-\eps)m$,
  hence $\neg{\cM}$ implies $\delta_2(\pi)\ge(1-\eps)m$.
\end{proof}

\begin{proof}[Proof of \cref{thm:codegree}]
  Apply \cref{lem:codegree}
  with $\eps=1/8$, $c=1/13$, and $C\ge 65$
  (one can check that $c<1/(4\log(24))$).
  Assume $k\ge C\log{n}$.
  By \cref{lem:codegree},
  for sufficiently large $n$,
  there exists a partition $\pi=(V_1,\dots,V_m)$ of $V$
  into $m=\floor{ck}$ parts
  for which
  $\delta_2(\pi)\ge (1-\eps)m$.
  In particular,
  the reduced graph
  $G_\pi=([m],E_\pi)$
  where $\{i,j\}\in E_\pi$
  if and only if
  $E_G(V_i,V_j)\ne\es$
  has $\delta(G_\pi)\ge\delta_2(G_\pi)\ge \delta_2(\pi)-2\ge (1-\eps)m-2=\frac{7}{8}m-2$.
  Hence, by \cref{thm:JLV},
  $G_\pi$ is $(\lfloor\alpha m\rfloor-2)$-rigid
  for $\alpha=1/2-\eps=3/8$.
  Write $d=\floor{k/40}$,
  and note that $\lfloor \alpha m\rfloor-2\ge d$
  (for sufficiently large $k$).
  Note also that since $\delta_2(\pi)\ge \frac{7}{8}m> d$,
  for every $1\le i\le m$ and every pair of distinct vertices $u,v\in V_i$,
  there are at least $d$ distinct indices $j\in[m]$ such that $u$ and $v$
  are connected in $G[V_i,V_j]$.
  By \cref{thm:strong_rigid_partition_1}, $G$ is $d$-rigid.
\end{proof}

\begin{remark}
Let us note that instead of applying \cref{thm:strong_rigid_partition_1} in the proof of \cref{thm:codegree}, it is possible to obtain the same conclusion by a repeated application of Whiteley's vertex splitting lemma (\cref{lemma:whiteley_splitting_2}).
\end{remark}

\section{Generalized rigid partitions}\label{sec:partitions}

In this section, we present a new sufficient condition for $d$-rigidity, proving  \cref{thm:strong_rigid_partition_1} as a special case.

As a warm-up, let us first present a short proof of a slightly weaker version of \cref{thm:strong_rigid_partition_1} (which, in fact, is sufficient for all the applications of \cref{thm:strong_rigid_partition_1} presented in this paper).

\begin{theorem}\label{prop:strong_type2_rigid_partition}
       Let $d\ge 1$ and $m\ge d+1$. Let $G=(V,E)$ be a graph, and let $V_1,\ldots, V_m$ be a partition of $V$.
   Assume that the reduced graph $G'$ is $d$-rigid,
   and that for every $1\le i\le m$ and every pair of distinct vertices $u,v\in V_i$, there are at least $d$ distinct indices $j\in[m]\sm\{i\}$ such that $u$ and $v$ lie in the same connected component of $G[V_i,V_j]$. Then, $G$ is $d$-rigid.
\end{theorem}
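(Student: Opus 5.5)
We build $G$ from the reduced graph $G'$ by a sequence of Whiteley vertex splits (\cref{lemma:whiteley_splitting_2}), splitting one part at a time, with $0$-extensions (\cref{lemma:0_extension}) and edge deletions as auxiliary moves. Since $d$-rigidity is monotone under adding edges, it suffices to prove that a spanning subgraph of $G$ is $d$-rigid. The subgraph we keep consists of all edges of $G'$-type between parts, i.e.\ the bipartite graphs $G[V_i,V_j]$ for $i\ne j$; edges inside the parts are ignored. This matches the hypothesis, which only uses $j\ne i$.

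The induction runs over intermediate graphs. For $0\le t\le m$, let $H_t$ be the graph obtained from $G$ by contracting each of the parts $V_{t+1},\dots,V_m$ to a single vertex $w_{t+1},\dots,w_m$. The vertices of $V_1\cup\dots\cup V_t$ are kept as they are, only edges between distinct parts are kept, and parallel edges are merged. Then $H_0$ is $G'$, which is $d$-rigid by assumption, and $H_m$ is a spanning subgraph of $G$ (all inter-part edges). So it suffices to show that if $H_{t-1}$ is $d$-rigid then so is $H_t$. That is, we must show that ``un-contracting'' the single vertex $w_t$ into the set $V_t$ preserves rigidity, given that the other parts are in whatever state they are in. Note that in $H_{t-1}$ the neighbourhood of $w_t$ is exactly the union of the neighbourhoods of the vertices of $V_t$ in $H_t$.

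We un-contract $w_t$ by repeated splitting. Order $V_t$ as follows. Define an auxiliary graph $F$ on $V_t$ in which $u\sim v$ whenever $u,v$ lie in a common component of $G[V_t,V_j]$ for at least $d$ indices $j\ne t$. By hypothesis every pair is such an edge, so $F$ is complete. However, a single split needs the two new vertices to share $d$ actual common neighbours, not just to be connected through paths. This gap is the crux.

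To handle it, refine the process. Instead of splitting $w_t$ directly into vertices of $V_t$, first also contract, for each $j$, each connected component of $G[V_t,V_j]$ on the $V_j$ side. Equivalently, induct over the whole structure using the following operation: replace a vertex $w$ whose neighbourhood is $N$ by two vertices $x,y$ with $N(x)\cup N(y)=N$ and $|N(x)\cap N(y)|\ge d$. This is valid by \cref{lemma:whiteley_splitting_2}. The connectedness of $u,v$ in $G[V_t,V_j]$ for $d$ values of $j$ means that, at the level where the $V_j$ parts are still contracted to single vertices $w_j$ (or to their components), the vertex $w_j$ is a common neighbour of the two pieces.

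Concretely, I would process the part $V_t$ while all later parts are still contracted. Peel off one vertex $v\in V_t$ at a time from the blob $w_t$, splitting $w_t$ into $v$ and the remainder $w_t'$ (representing $V_t\sm\{v\}$). The common neighbours of $v$ and $w_t'$ include every contracted $w_j$ ($j>t$) adjacent to both. Each index $j$ where $v$ and some other $u\in V_t$ share a component of $G[V_t,V_j]$ gives, when $V_j$ is still contracted, that $w_j$ is adjacent both to $v$ and to $V_t\sm\{v\}$. This yields at least $d$ common neighbours, provided those $j$ satisfy $j>t$.

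The main obstacle is the parts $V_j$ with $j<t$ that are already expanded: there a component path between $u$ and $v$ does not give a single common neighbour. To resolve this, I would change the induction. Perform all splits at the ``fully contracted-components'' level: first split every $w_i$ into the components of $G[V_i,V_j]$ structure, so that every common-component relation becomes a genuine common neighbour. Only afterwards expand those component vertices into actual vertices along spanning trees, using $0$-extensions or splits with $d$ shared neighbours. Alternatively, I would peel vertices in a leaf order of a spanning tree of each component so that shared neighbours persist. Making this bookkeeping consistent is the step I expect to require the most care.
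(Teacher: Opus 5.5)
There is a genuine gap, and it is the step you flag yourself: you never show how to un-contract $w_t$ once some $V_j$ with $j<t$ is already expanded. Neither of your two remedies is well defined.
\begin{itemize}
\item The components of $G[V_i,V_j]$ for different $j$ induce incompatible partitions of $V_i$. So there is no single ``component level'' graph in which every common-component relation becomes a genuine common neighbour.
\item A leaf order for one spanning tree need not be a leaf order for the other $d-1$ trees.
\end{itemize}
Edge deletion is also not a rigidity-preserving move, so it cannot serve as an auxiliary operation.

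In fact no ordering or bookkeeping can rescue this strategy. Take $d=2$, $m=3$, parts $\{a_i\}_{i\le 4}$, $\{b_i\}_{i\le 4}$, $\{c_i\}_{i\le 4}$, and no edges inside parts. Let the three bipartite graphs be the Hamiltonian paths $a_1b_1a_2b_2a_3b_3a_4b_4$, $a_3c_1a_1c_2a_4c_3a_2c_4$ and $b_3c_3b_1c_1b_4c_4b_2c_2$.
\begin{itemize}
\item All hypotheses hold: $G'=K_3$ is $2$-rigid, $m=d+1$, and each $G[V_i,V_j]$ is a connected path through $V_i\cup V_j$. So $G$ is $2$-rigid by the theorem.
\item Each part is traversed by its two paths in the orders $1234$ and $3142$, which share no consecutive pair. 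From this one checks directly that no two vertices of $G$, whether in the same part or not, have two common neighbours.
\item Moreover $\delta(G)=3$ and $|E(G)|=21=2|V|-3$.
\end{itemize}
Now consider any sequence of splits as in Lemma~\ref{lemma:whiteley_splitting_2}, $0$-extensions and edge additions ending at $G$. The last vertex-creating step cannot be a split, because its two new vertices would already have two common neighbours in $G$. It cannot be a $0$-extension either, because that would start from a $2$-rigid graph on $11$ vertices with at most $18<19$ edges. So your toolbox cannot produce $G$ at all, in any order.

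The missing idea is to argue directly with infinitesimal motions at a degenerate position, as the paper does. Choose a generic $\p'$ on $[m]$ and place every vertex of $V_i$ at the same point $\p'(i)$. Every edge of $G[V_i,V_j]$ is then parallel to $\p'(i)-\p'(j)$. Hence, for any infinitesimal motion $\bq$, the quantity $\bq(w)\cdot(\p'(i)-\p'(j))$ is constant along every path of $G[V_i,V_j]$. This is what turns ``same component'', rather than ``common neighbour'', into a linear constraint.

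For $u,v\in V_i$, the $d$ linearly independent vectors $\p'(i)-\p'(j)$ force $\bq(u)=\bq(v)$. So $\bq$ descends to an infinitesimal motion of $(G',\p')$, which is trivial. Thus $(G,\p)$ is infinitesimally rigid; its points affinely span $\RR^d$ because $m\ge d+1$. Since the rank of the rigidity matrix at any position is at most its generic rank, $G$ is $d$-rigid. All parts are handled simultaneously, so no ordering issue ever arises.
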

Note that the only difference between \cref{prop:strong_type2_rigid_partition} and \cref{thm:strong_rigid_partition_1} is that in \cref{prop:strong_type2_rigid_partition} we ignore the edges inside each part $V_i$. Before the proof, let us recall a few basic facts about infinitesimal rigidity.

Let $G=(V,E)$ and $\p:V\to\mathbb{R}^d$. An \emph{infinitesimal motion} of $(G,\p)$ is a map $\bq:V\to\mathbb{R}^d$ such that $(\bq(u)-\bq(v))\cdot (\p(u)-\p(v))=0$ for all $\{u,v\}\in E$. An infinitesimal motion $\bq$ is called \emph{trivial} if there exist a skew-symmetric matrix $A\in\mathbb{R}^{d\times d}$ and a vector $x\in\mathbb{R}^d$ such that $\bq(v)=A \p(v)+x$ for all $v\in V$. The framework $(G,\p)$ is called \emph{infinitesimally rigid} if every infinitesimal motion of $(G,p)$ is trivial.

The rigidity matrix of $(G,\p)$, denoted by $R(G,\p)$, is the $|E|\times d|V|$ matrix whose rows are indexed by edges of $G$, and columns indexed by vertices (with $d$ consecutive columns associated with each vertex), defined by
\[
    R(G,\p)_{e,u}=\begin{cases}
       \p(u)-\p(v) & \text{if } e=\{u,v\} \text{ for some } v\in V,\\
        0 & \text{otherwise,}
    \end{cases}
\]
for all $e\in E$ and $u\in V$. Note that the infinitesimal motions (considered as vectors in $\mathbb{R}^{d|V|}$) are exactly the solutions to the system of equations $R(G,\p)\bq=0$.

It is a well-known fact (see~\cites{AR78, AR79}) that, assuming that the affine span of $\p(V)$ is at least $(d-1)$-dimensional, the space of trivial infinitesimal motions of $(G,\p)$ has dimension $\binom{d+1}{2}$. Therefore, in this case, $(G,\p)$ is infinitesimally rigid if and only if $\text{rank}(R(G,\p))= d|V|-\binom{d+1}{2}$.

It is easy to show that if $(G,\p')$ is infinitesimally rigid for some $\p':V\to \RR^d$, then $(G,\p)$ is infinitesimally rigid for every generic $\p:V\to\RR^d$.
Moreover, every infinitesimally rigid framework $(G,\p)$ is rigid, and, for generic $\p$, the notions of rigidity and infinitesimal rigidity are equivalent. Hence, a graph $G$ is $d$-rigid if and only if there exists $\p:V\to \RR^d$ such that $(G,\p)$ is infinitesimally rigid (see~\cite{AR79}).

\begin{proof}[Proof of \cref{prop:strong_type2_rigid_partition}]
Let $\p':[m]\to\mathbb{R}^d$ be a generic embedding. Since $G'$ is $d$-rigid, $(G',\p')$ is infinitesimally rigid.
We extend $\p'$ to an embedding $\p$ of $V$ by setting $\p(v)=\p'(i)$ whenever $v\in V_i$.
We will show that $(G,\p)$ is infinitesimally rigid, and therefore $G$ is $d$-rigid.

Assume for contradiction that $(G,\p)$ is not infinitesimally rigid.
Then, there exists a non-trivial infinitesimal motion $\bq:V\to\mathbb{R}^d$ of $(G,\p)$.

First, we will show that for every $i\in[m]$ and $u,v\in V_i$, we must have $\bq(u)=\bq(v)$. Let $i\in[m]$ and $u,v\in V_i$. Let $j_1,\ldots,j_d\in[m]\sm\{i\}$ for which $u$ and $v$ are connected by a path in $G[V_i,V_{j}]$ for all $j\in\{j_1,\ldots,j_d\}$. Let $1\le k\le d$. Note that for every $x,y\in V_i\cup V_{j_k}$, if $x$ and $y$ are adjacent in $G[V_i,V_{j_k}]$, then $(\bq(x)-\bq(y))\cdot (\p'(i)-\p'(j_k))=0$. By transitivity, we obtain $\bq(u)\cdot (\p'(i)-\p'(j_k))= \bq(v)\cdot (\p'(i)-\p'(j_k))$.
Since $\p'(i)-\p'(j_1),\ldots,\p'(i)-\p'(j_d)$ are linearly independent (as $\p'$ is generic), we must have $\bq(u)=\bq(v)$.

Now, we define $\bq':[m]\to \mathbb{R}^d$ by $\bq'(i)=\bq(u)$, where $u$ is a vertex in $V_i$, for all $1\le i\le m$ (note that $\bq'$ is well-defined by our previous argument).  Note that $\bq'$ is an infinitesimal motion of $(G',\p')$. Indeed, for every $\{i,j\}\in E'$, there exist $u\in V_i$ and $v\in V_j$ for which $\{u,v\}\in E$, and so
\[
    (\bq'(i)-\bq'(j))\cdot (\p'(i)-\p'(j))= (\bq(u)-\bq(v))\cdot (\p(u)-\p(v))=0,
\]
since $\bq$ is an infinitesimal motion of $(G,\p)$.
Moreover, since $\bq$ is non-trivial, $\bq'$ is non-trivial as well (indeed, if $\bq'(i)=A \p'(i)+z$ for all $i\in [m]$, for some skew-symmetric $A\in \mathbb{R}^d$ and $z\in \mathbb{R}^d$, then $\bq(v)= A \p(v)+z$ for all $v\in V$, in contradiction to $\bq$ being non-trivial). But this is a contradiction to $(G',\p')$ being infinitesimally rigid.
\end{proof}

In order to define a more general sufficient condition for rigidity, we first need to introduce the notion of \emph{anchored graphs}, presented next.

\begin{definition}
    Let $d\ge 1$. Let $G=(V,E)$ be a multi-graph, and  let $c:E\to \mathbb{N}$ be a map assigning a ``colour'' to each edge. Let $\{x_{i}\}_{i\in c(E)}\subseteq\RR^d$ be a generic set of points.
    A map $\bq:V\to\RR^d$ is called a \emph{motion} of $(G,c)$ if
    \[
        (\bq(u)-\bq(v))\cdot x_{c(e)} =0
    \]
    for every $e\in E$ with endpoints $u$ and $v$.
    A motion $\bq:V\to\RR^d$ is called \emph{trivial} if $\bq(u)=\bq(v)$ for all $u,v\in V$.
    We say that $(G,c)$ is \emph{$d$-anchored} if every motion of $(G,c)$ is trivial.
\end{definition}

Next, we introduce the notion of \emph{generalized rigid partitions}.

\begin{definition}\label{def:partitions}
Let $G=(V,E)$ be a graph and let $m\ge d\ge 1$. Let $V_1,\ldots,V_m$ be a partition of $V$. For all $i\in[m]$, let $m_i\ge m$, and let $\mathcal{I}=\{(i,j):\, 1\le i\le m,\, i<j\le m_i\}$. Let $\{G_{ij}\}_{(i,j)\in\mathcal{I}}$ be a family of pairwise edge-disjoint subgraphs of $G$ such that $V(G_{ij})= V_i\cup V_j$ for all $1\le i< j\le m$, and $V(G_{ij})=V_i$ for $1\le i\le m$ and $m<j\le m_i$. Let $\hat{G}=\bigcup_{(i,j)\in\mathcal{I}} G_{ij}\subseteq G$. For convenience, for $1\le j<i\le m$, we denote $G_{ij}=G_{ji}$. The \emph{reduced graph} associated with $(G,\{V_i\}_{i=1}^m, \{G_{ij}\}_{(i,j)\in\mathcal{I}})$ is the graph $G'=([m],E')$, where
 \[
      E'=\left\{\{i,j\}\in\binom{[m]}{2}:\, E_{G_{ij}}(V_i,V_j)\ne \es\right\}.
\]
For every $1\le i\le m$, the $i$-th \emph{anchoring graph} of $(G,\{V_i\}_{i=1}^m, \{G_{ij}\}_{(i,j)\in\mathcal{I}})$ is the edge-coloured multi-graph $(H_i,c_i)$ on vertex set $V_i$, having, for every $j\in[m_i]\sm\{i\}$, an edge $\{u,v\}$ with $c_i(\{u,v\})=j$ if $u$ and $v$ are connected by a path in $G_{ij}$.

Assume that the following three properties hold.
\begin{description}[leftmargin=*]
    \item[Monochromatic cuts property:] For every $i\in [m]$ and $U\subseteq V_i$ with $|U|\ge 2$, there exist $(i,j)\in\mathcal{I}$ and a partition $U=U'\cup U''$ such that $E_{\hat{G}}(U',U'')\subseteq E(G_{ij})$.
    \item[Anchored parts property:] For every $i\in [m]$, the $i$-th anchoring graph $(H_i,c_i)$ is $d$-anchored.
    \item[Rigid reduced graph:] The reduced graph $G'$ is $d$-rigid.
\end{description}
 Then, we say that $(\{V_i\}_{i=1}^m, \{G_{ij}\}_{(i,j)\in\mathcal{I}})$ is a \emph{generalized $d$-rigid partition} of $G$.
\end{definition}

Our main result in this section is the following theorem.

\begin{theorem}\label{thm:gen_partitions}
    If $G$ admits a generalized $d$-rigid partition, then $G$ is $d$-rigid.
\end{theorem}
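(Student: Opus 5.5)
The plan is to adapt the warm-up proof of \cref{prop:strong_type2_rigid_partition}, replacing the degenerate embedding (all of $V_i$ placed at $\p'(i)$) by a small hierarchical perturbation of it, and then to pass to a limit of normalized rigidity matrices. Since a spanning subgraph of a $d$-rigid-implying structure suffices, I first discard all edges of $G$ not in $\hat G$; by edge-disjointness every remaining edge $e$ then carries a unique label $(i,j)\in\mathcal I$ (with $e\in E(G_{ij})$). Fix a generic $\p':[m]\to\RR^d$ and generic vectors $x_j$ for the colours $j>m$, jointly generic with $\p'$. For each label define a direction $y_{ij}=\p'(i)-\p'(j)$ if $j\le m$, and $y_{ij}=x_j$ if $j>m$. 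The target is a framework in which each edge labelled $(i,j)$ points, in the limit, in direction $y_{ij}$.

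\textbf{Step 1: the embedding.} For each $i$, apply the monochromatic cuts property recursively to $V_i$. Split $V_i=U'\cup U''$ with label $(i,j)$, then split each part again, and so on down to singletons. This gives a binary tree whose internal nodes $U$ carry a label $(i,j_U)$ and a depth $\ell(U)$. Define $\p_t(v)=\p'(i)+\sum_U \pm t^{\ell(U)+1}y_{ij_U}$ for $v\in V_i$, where the sum runs over the ancestors $U$ of $v$, and the sign is $+$ if $v$ lies in the second part $U''$ of $U$ and $0$ otherwise. Consider an edge $\{u,v\}\in E(\hat G)$.
\begin{itemize}
\item If $u,v$ lie in the same $V_i$, let $U$ be the deepest node containing both; the edge crosses the cut at $U$. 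By the cut property its label is $(i,j_U)$, and $\p_t(u)-\p_t(v)=\pm t^{\ell(U)+1}y_{ij_U}+O(t^{\ell(U)+2})$.
\item If $u\in V_i$, $v\in V_j$ with $i\neq j$, then $\p_t(u)-\p_t(v)=y_{ij}+O(t)$.
\end{itemize}
So in both cases, after normalizing the row of $R(G,\p_t)$ by the appropriate power of $t$, the row tends to the row of the system $(\bq(u)-\bq(v))\cdot y_{\mathrm{label}(e)}=0$. Call this limiting matrix $R_0$.

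\textbf{Step 2: the kernel of $R_0$.} Let $\bq\in\ker R_0$.
\begin{itemize}
\item If $u,w\in V_i$ are joined by a path in $G_{ij}$, then summing the constraints along the path gives $(\bq(u)-\bq(w))\cdot y_{ij}=0$. This holds whether the path edges lie inside $V_i$, inside $V_j$, or between them, since all of them carry label $(i,j)$ and hence direction $y_{ij}$. So $\bq|_{V_i}$ is a motion of the anchoring graph $(H_i,c_i)$, with $x_j$ replaced by $y_{ij}$. These directions are generic, so the anchored parts property forces $\bq$ to be constant on each $V_i$.
\item Setting $\bq'(i)=\bq|_{V_i}$, exactly as in the warm-up $\bq'$ is an infinitesimal motion of $(G',\p')$. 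Hence it is trivial, $\bq'(i)=A\p'(i)+z$.
\item Conversely, every such lifted trivial motion lies in $\ker R_0$: inside-part edges have zero difference, and cross edges are satisfied.
\end{itemize}
So $\dim\ker R_0=\binom{d+1}{2}$, using $m\ge d$ and the genericity of $\p'$ to get the correct dimension of trivial motions. Therefore $\operatorname{rank}R_0=d|V|-\binom{d+1}{2}$.

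\textbf{Step 3: conclusion.} Rank is lower semicontinuous and row scaling does not change rank. So for small $t>0$, $\operatorname{rank}R(G,\p_t)\ge d|V|-\binom{d+1}{2}$. This is also the maximum possible rank, since $\p_t(V)$ affinely spans enough (it contains perturbations of the $m\ge d$ generic points $\p'(i)$, and any shortfall can be absorbed by a small generic perturbation, which again does not lower the rank). Thus $(G,\p_t)$ is infinitesimally rigid, and $G$ is $d$-rigid.

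The main obstacle I expect is Step 1 together with the genericity bookkeeping in Step 2. One must check that every edge inside $V_i$ is ``seen'' at exactly one cut level with the correct label; this is where the monochromatic cuts property, applied to every subset $U$ and not just $V_i$, is essential. One must also check that the limit directions $\{y_{ij}\}$ are generic enough for the $d$-anchored hypothesis to apply. For the latter I would argue that $d$-anchoredness, being the maximality of a rank of a matrix polynomial in the $x$'s, holds for every choice of directions outside a proper algebraic set, and that $\{\p'(i)-\p'(j)\}_j\cup\{x_j\}_{j>m}$ avoids it for generic $\p'$ and $x$.
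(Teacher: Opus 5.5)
Your proof is correct and follows essentially the same route as the paper. Both build a limit framework in which each $V_i$ collapses to a generic point and every edge labelled $(i,j)$ points in a fixed direction attached to that label; the anchored parts property then forces $\bq$ to be constant on each part, which reduces $\bq$ to an infinitesimal motion of $(G',\p')$, and lower semicontinuity of rank finishes. The differences are minor. You construct the converging frameworks by hand via the multi-scale perturbation along the recursive cut tree, in effect reproving \cref{lemma:combing,lemma:constructing_limit_frameworks,lemma:limit_rigidity} inline. You also use $x_j$ rather than $x_i-x_j$ for the extra colours, with a direct genericity argument in place of the paper's specialization remark.
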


\Cref{thm:gen_partitions} reduces, in the special case $m\le d+1$, to~\cite[Theorem 1.1]{KLM25}, which in turn generalizes results of Crapo~\cite{crapo1990plane} and Lindemann~\cite{LinPhD} corresponding to the $d=2,\, m\le 3$ and $d=3,\, m\le 4$ cases respectively. Moreover, as we will see next, it implies \cref{thm:strong_rigid_partition_1}, which, as we mentioned in the introduction, extends results from~\cite{LNPR23+} and~\cite{KLM25}, as well as Whiteley's vertex splitting lemmas from~\cites{whiteley90vertex,whiteley96some}.

\subsection{Generalized strong rigid partitions}

The notion of generalized rigid partitions introduced in \cref{def:partitions} is relatively complex and hard to apply directly. Therefore, we present next a notion of \emph{generalized strong rigid partitions}, which provides a more concrete sufficient condition for rigidity, and extends the notions of ``strong rigid partitions'' introduced in~\cite{LNPR23+} and~\cite{KLM25}.

\begin{definition}
Let $G=(V,E)$ be a graph. Let $m\ge d\ge 1$, and let $V_1,\ldots,V_m$ be a partition of $V$.  Recall that the \emph{reduced graph} of $(G,\{V_i\}_{i=1}^m)$ is defined as $G'=\{[m],E'\}$, where
 \[
      E'=\left\{\{i,j\}\in\binom{[m]}{2}:\, E(V_i,V_j)\ne \es\right\}.
\]
Assume that $G'$ is $d$-rigid. For $1\le i\le m$, let $Q_i$ be the graph on vertex set $V_i$ whose edges are the pairs $\{u,v\}\subseteq V_i$ for which there exist $d$ distinct indices $j_1,\ldots,j_d\in [m]$ such that $u$ and $v$ are connected by a path in $G[V_i,V_{j_k}]$ for all $k=1,\ldots,d$. Assume that $Q_i$ is connected for each $1\le i\le m$.
Then, we say that $V_1,\ldots,V_m$ is a \emph{generalized strong $d$-rigid partition} of $G$.
\end{definition}

\begin{theorem}\label{thm:strong_rigid_partition_2}
    If $G$ admits a generalized strong $d$-rigid partition, then $G$ is $d$-rigid.
\end{theorem}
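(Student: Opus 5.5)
The plan is to derive \cref{thm:strong_rigid_partition_2} from \cref{thm:gen_partitions}. We build a generalized $d$-rigid partition whose parts are the given $V_1,\ldots,V_m$. The edges of $G$ inside $V_i$ must be treated as extra ``colours'', since the edges $j$ with $j=i$ in the definition of $Q_i$ are allowed. So the first step is to assign the edges of $G$ to the subgraphs $G_{ij}$:
\begin{itemize}
\item for $i<j\le m$, let $G_{ij}=G[V_i,V_j]$ restricted to the edges between $V_i$ and $V_j$;
\item take $m_i=m+1$, and let $G_{i,m+1}=G[V_i]$ be the graph of edges inside $V_i$.
\end{itemize}
These subgraphs are pairwise edge-disjoint, and their union $\hat G$ is $G$.

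\textbf{Reduced graph.} By construction, the reduced graph of \cref{def:partitions} coincides with the reduced graph $G'$ of the partition. It is $d$-rigid by assumption.

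\textbf{Anchored parts.} In the anchoring graph $H_i$, an edge coloured $j$ joins $u,v$ whenever $u,v$ are connected in $G_{ij}$. We must interpret the internal colour $m+1$ correctly.
\begin{itemize}
\item For $j\ne i$, connectivity in $G[V_i,V_j]$ uses only the $V_i$–$V_j$ edges and the $V_j$-internal edges. The $V_j$-internal edges lie in $G_{j,m+1}$, not in $G_{ij}$, so this is a genuine mismatch. I would handle it by a modification: for $j\ne i$ take the anchoring colour $j$ with connectivity in $G[V_i,V_j]$, and redistribute the edges so as to allow edges in several $G_{ij}$.
\item A cleaner route: if $u,v\in V_i$ are joined in $G[V_i,V_j]$ by a path that dips through internal edges of $V_i$, we can still argue directly.
\end{itemize}
Because of this mismatch, I will prove anchoring linear-algebraically, mirroring the proof of \cref{prop:strong_type2_rigid_partition}, and prove the whole theorem directly rather than through \cref{thm:gen_partitions}.

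\textbf{Direct argument.} Place each $V_i$ at $\p'(i)$, with $\p'$ generic, and take an infinitesimal motion $\bq$. A path in $G[V_i,V_j]$ uses three kinds of edges:
\begin{itemize}
\item edges between $V_i$ and $V_j$, which force $\bq$-differences orthogonal to $\p'(i)-\p'(j)$;
\item internal edges of $V_i$ or $V_j$, which have zero length in $\p$ and so impose no constraint.
\end{itemize}
Internal edges therefore do not directly propagate equalities. This is the crux.

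\textbf{Fix: perturb the placement.} Instead of putting all of $V_i$ at one point, place $\p(v)=\p'(i)+t\,\bq_v$ with small $t$, and pass to a limit argument. Equivalently, use the standard fact that the rank of the rigidity matrix is lower semicontinuous. I expect the main obstacle to be making this limit argument work.

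\textbf{Induction on $|V_i|$.} A concrete plan I would commit to is induction on $\sum_i(|V_i|-1)$. This splits into a base case and an inductive step.
\begin{itemize}
\item \emph{Base case.} All parts are singletons. Then $G=G'$, which is rigid.
\item \emph{Inductive step.} Choose an edge $\{u,v\}$ of a spanning tree of $Q_i$. Contract $u,v$ into one vertex $w$, obtaining $G/uv$ with partition $V_i/uv$.
\end{itemize}
In the inductive step, connectivity in $G[V_i,V_j]$ survives contraction, and $Q_i/uv$ stays connected. Hence $G/uv$ satisfies the hypotheses and is $d$-rigid by induction. Now $u,v$ are connected in $G[V_i,V_{j_k}]$ for $d$ indices $j_k$. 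This is exactly a vertex-splitting situation generalized to paths, and I would verify it by the motion argument:
\begin{itemize}
\item put $u,v$ at a common point;
\item each path through $V_{j_k}$ ($j_k\ne i$) yields $\bq(u)-\bq(v)\perp \p'(i)-\p'(j_k)$ up to contributions from the path's interior vertices;
\item handle those interior contributions by applying the induction structure to the other vertices.
\end{itemize}
For $j_k=i$, meaning an internal path, we obtain $u,v$ joined inside $G[V_i]$, and this plays the role of the edge $xy$ in \cref{lemma:whiteley_splitting_1}.

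The hardest step is proving the $d$ orthogonality constraints when paths pass through vertices whose positions are also coincident. I would first try to reduce to $|V_j|$-level coincidences by contracting all of $V_j$ at once for $j\ne i$, where the induction makes every other part's vertices move identically.
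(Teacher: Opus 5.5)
\textbf{Gap: you abandon the correct route because of a misreading, and the alternatives you sketch are not proofs.}

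Your opening construction is the right one: $G_{ij}=G[V_i,V_j]$ for $i\ne j$, $m_i=m+1$, and $G_{i,m+1}=G[V_i]$. You drop it because you misread the definition of $G[A,B]$. By the paper's definition, $G[A,B]$ contains only edges meeting both $A$ and $B$. So for $i\neq j$ the graph $G[V_i,V_j]$ is bipartite and contains no edge lying inside $V_i$ or inside $V_j$, and the ``mismatch'' you describe does not exist.

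With the correct reading, the anchored parts property is immediate. Let $\{u,v\}\in E(Q_i)$ with witnesses $j_1,\dots,j_d$. Then $H_i$ contains edges $uv$ of $d$ distinct colours: colour $j_k$ when $j_k\ne i$, and colour $m+1$ when $j_k=i$. Any motion $\bq$ therefore satisfies $(\bq(u)-\bq(v))\cdot x_c=0$ for $d$ distinct generic, hence linearly independent, vectors $x_c$, so $\bq(u)=\bq(v)$. Connectivity of $Q_i$ then makes $\bq$ constant on $V_i$.

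You also never check the monochromatic cuts property, which \cref{thm:gen_partitions} requires. It holds trivially: every edge of $\hat G$ inside $V_i$ belongs to the single subgraph $G_{i,m+1}$, so for $U\subseteq V_i$ any partition $U=U'\cup U''$ satisfies $E_{\hat G}(U',U'')\subseteq E(G_{i,m+1})$. The reduced graph is $d$-rigid by hypothesis, so \cref{thm:gen_partitions} applies and the proof is finished.

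What you propose instead does not amount to a proof.
\begin{itemize}
\item In the direct argument you correctly locate the real difficulty: once each $V_i$ is collapsed to a point, edges inside a part have zero length and impose no constraint. That is exactly what the limit-framework machinery behind \cref{thm:gen_partitions} handles. There, \cref{lemma:combing}, fed by the monochromatic cuts property, gives those edges the well-defined direction $\pm y_{i,m+1}$. Your perturbation idea is never carried out.
\item The induction by contracting an edge $uv$ of $Q_i$ fails at the inductive step. Rigidity of $G/uv$ together with $d$ connecting paths cannot yield rigidity of $G$ by a local ``vertex splitting along paths''. Suppose $u,v$ are nonadjacent with no common neighbour, so all connecting paths in the bipartite graphs have length at least $4$. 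Then $|E(G)|=|E(G/uv)|$ while $|V(G)|=|V(G/uv)|+1$. So whenever $G/uv$ is minimally $d$-rigid, $G$ has $d$ fewer edges than the $d|V(G)|-\binom{d+1}{2}$ a $d$-rigid graph needs.
\item A path in $G[V_i]$ likewise cannot play the role of the edge $xy$ in \cref{lemma:whiteley_splitting_1}.
\end{itemize}
Making the induction work would require a much stronger invariant, namely infinitesimal rigidity of a specific collapsed limit placement, which is just \cref{thm:gen_partitions} again.
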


Note that \cref{thm:strong_rigid_partition_2} immediately implies \cref{thm:strong_rigid_partition_1} (which requires the graphs $Q_i$ to be complete, instead of just connected). For the proof of \cref{thm:strong_rigid_partition_2}, we need the following very simple lemma.

\begin{lemma}\label{lemma:trivial_anchoring}
    Let $d\ge 1$. Let $G=(V,E)$ be a multi-graph,  let $c:E\to \mathbb{N}$, and let $\{x_i\}_{i\in c(E)}$ be a generic set of points in $\RR^d$. Assume that $\bq:V\to\RR^d$ is a motion of $(G,c)$, and let $u,v\in V$. If $u$ and $v$ are connected by a monochromatic path of colour $i$, then $(\bq(u)-\bq(v))\cdot x_i=0$.
\end{lemma}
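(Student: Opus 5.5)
The plan is to induct on the length of the monochromatic path and use only the defining equations of a motion; genericity of the points $\{x_i\}$ plays no role here. Let the path be $u=w_0,w_1,\ldots,w_\ell=v$, where for each $1\le t\le \ell$ there is an edge $e_t\in E$ with endpoints $w_{t-1}$ and $w_t$ and $c(e_t)=i$. Since $G$ may be a multi-graph, I will fix one such edge $e_t$ of colour $i$ for each consecutive pair of the path.

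Because $\bq$ is a motion of $(G,c)$, the definition gives, for each $t$,
\[
(\bq(w_{t-1})-\bq(w_t))\cdot x_{c(e_t)}=(\bq(w_{t-1})-\bq(w_t))\cdot x_i=0 .
\]
Summing these identities over $t=1,\ldots,\ell$, the left-hand side telescopes by linearity of the dot product:
\[
\sum_{t=1}^{\ell}(\bq(w_{t-1})-\bq(w_t))\cdot x_i=(\bq(w_0)-\bq(w_\ell))\cdot x_i=(\bq(u)-\bq(v))\cdot x_i .
\]
Hence $(\bq(u)-\bq(v))\cdot x_i=0$.

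The trivial path $\ell=0$, i.e. $u=v$, holds automatically. Equivalently, one can phrase the argument as saying that the relation $a\sim b\iff (\bq(a)-\bq(b))\cdot x_i=0$ is an equivalence relation containing every edge of colour $i$, so it contains its connected components.

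There is no real obstacle here. The only point needing care is that the hypothesis must be read as every edge of the path carrying the same colour $i$, so that each step contributes a constraint against the same vector $x_i$.
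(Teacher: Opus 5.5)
Your proof is correct and matches the paper's, which notes that $(\bq(w)-\bq(z))\cdot x_i=0$ holds for every $i$-coloured edge and concludes by transitivity. Your telescoping sum along the path is that same argument written out explicitly, and you are right that genericity of the points $x_i$ is not used here.
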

\begin{proof}
    Since $(\bq(w)-\bq(z))\cdot x_i=0$ for every pair of vertices $w,z$ forming an $i$-coloured edge, the claim follows trivially by transitivity.
\end{proof}

\begin{proof}[Proof of \cref{thm:strong_rigid_partition_2}]
    Define, for $i\ne j$, $G_{ij}=G[V_i,V_j]$. In addition, define for all $1\le i\le m$, $G_{i,m+1}=G[V_i]=G[V_i,V_i]$. Let $\mathcal{I}=\{(i,j):\, 1\le i\le m:\, i<j\le m+1\}$.
    We will show that $(\{V_i\}_{i=1}^m,\{G_{ij}\}_{(i,j)\in \mathcal{I}})$ is a generalized $d$-rigid partition of $G$. Indeed, the reduced graph $G'$ is $d$-rigid by assumption, and the monochromatic cuts condition holds trivially, since each $V_i$ contains edges of just one of the subgraphs $G_{ij}$ (namely, of the graph $G_{i,m+1}=G[V_i]$). We are left to show that the anchored parts property holds.

    Let $x_1,\ldots,x_{m}$ be a generic set of points. Let $1\le i\le m$, and let $\bq:V_i\to\RR^d$ be a motion of the $i$-th anchoring graph $(H_i,c_i)$ associated with $(G,\{V_i\}_{i=1}^m,\{G_{ij}\}_{(i,j)\in \mathcal{I}})$. We need to show that $\bq(u)=\bq(v)$ for all $u,v\in V_i$. Since $Q_i$ is connected, it is enough to show this for $u$ and $v$ that are adjacent in $Q_i$. Let $u,v\in V_i$ such that $\{u,v\}\in E(Q_i)$. By the definition of $Q_i$, there exist indices $j_1,\ldots,j_d\in[m]$ such that $u$ and $v$ are connected by a path in $G[V_i,V_{j_k}]$ for all $k=1,\ldots,d$. By \cref{lemma:trivial_anchoring}, we have
    \[
        (\bq(u)-\bq(v))\cdot x_{j_k} = 0
    \]
    for $k=1,\ldots, d$. Since the points $x_j$ are generic, $x_{j_1},\ldots,x_{j_d}$ are linearly independent, and therefore we must have $\bq(u)-\bq(v)=0$. That is, $\bq(u)=\bq(v)$. Hence, $\bq$ is trivial, and therefore $(H_i,c_i)$ is $d$-anchored, as wanted. Hence, $(\{V_i\}_{i=1}^m,\{G_{ij}\}_{(i,j)\in \mathcal{I}})$ is a generalized $d$-rigid partition of $G$, and so, by \cref{thm:gen_partitions}, $G$ is $d$-rigid.
\end{proof}

\subsection{Limit frameworks}

For the proof of Theorem \ref{thm:gen_partitions}, we need to introduce the notion of limit frameworks and to recall some relevant facts about them.

Let $G=(V,E)$ be a graph, let $\p:V\to\mathbb{R}^d$ and $g: \{(u,e):\, e\in E,\, u\in e\}\to \mathbb{R}^d$. We say that $(G,\p,g)$ is a \emph{$d$-dimensional limit framework} if there exists a sequence $\{\p_n: V\to \mathbb{R}^d\}_{n=1}^{\infty}$ such that, for every $v\in V$,
\[
    \lim_{n\to\infty} \p_n(v)= \p(v),
\]
and for every $e\in E$ and $u\in e$,
\[
    \lim_{n\to\infty} \frac{\p_n(u)-\p_n(v)}{\|\p_n(u)-\p_n(v)\|}= g(u,e),
\]
where $v$ is the unique vertex in $e\sm\{u\}$. In such case, we say that the sequence $\{(G,\p_n)\}_{n=1}^{\infty}$ \emph{converges} to $(G,\p,g)$.

Note that if $(G,\p,g)$ is a limit framework then $\|g(u,e)\|=1$ for all $e\in E$ and $u\in e$, and that $g(u,e)=-g(v,e)$ for all $e=\{u,v\}\in E$.

We say that a map $\bq:V\to \mathbb{R}^d$ is an \emph{infinitesimal motion} of $(G,\p,g)$ if, for every $e=\{u,v\}\in E$,
\[
    (\bq(u)-\bq(v))\cdot g(u,e)=0.
\]

We say that $\bq$ is \emph{trivial} if there exist a skew-symmetric matrix $A\in\mathbb{R}^{d\times d}$ and a vector $x\in \mathbb{R}^d$ such that $\bq(v)=A \p(v)+x$ for all $v\in V$.  A limit framework $(G,\p,g)$ is called \emph{infinitesimally rigid} if every infinitesimal motion of $(G,\p,g)$ is trivial.

We define the rigidity matrix $R(G,\p,g)$ of a limit framework $(G,\p,g)$ as the $|E|\times d|V|$ matrix whose rows are indexed by edges of $G$, and columns indexed by vertices (with $d$ consecutive columns associated with each vertex), with entries given by
\[
    R(G,\p)_{e,u}=\begin{cases}
        g(u,e) & \text{if } u\in e,\\
        0 & \text{otherwise,}
    \end{cases}
\]
for all $e\in E$ and $u\in V$. Similarly to the non-limiting case, it is easy to check that, assuming that the affine span of $\p(V)$ is at least $(d-1)$-dimensional, $(G,\p,g)$ is infinitesimally rigid if and only if $\text{rank}(R(G,\p,g))= d|V|-\binom{d+1}{2}$.

\begin{lemma}\label{lemma:limit_rigidity}
    Let $G=(V,E)$ be a graph, and let $(G,\p,g)$ be a $d$-dimensional limit framework of $G$. Assume that the affine span of $\p(V)$ is at least $(d-1)$-dimensional. If $(G,\p,g)$ is infinitesimally rigid, then $G$ is $d$-rigid.
\end{lemma}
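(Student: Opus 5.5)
Since $(G,\p,g)$ is infinitesimally rigid, its rigidity matrix has rank $d|V|-\binom{d+1}{2}$. The idea is to pass this rank to the honest frameworks $(G,\p_n)$ for large $n$, using lower semicontinuity of rank. Then the rank transfers to generic embeddings, and hence gives $d$-rigidity.

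Let $\{\p_n\}$ be a sequence converging to $(G,\p,g)$. First I would discard finitely many terms so that $\p_n(u)\ne\p_n(v)$ for every edge $\{u,v\}$; this is implicit in the definition, since the normalized differences must make sense. Next, let $R_n$ be the matrix obtained from $R(G,\p_n)$ by dividing the row of each edge $e=\{u,v\}$ by $\|\p_n(u)-\p_n(v)\|>0$. Rescaling rows by nonzero scalars does not change rank, so $\text{rank}(R_n)=\text{rank}(R(G,\p_n))$. By the definition of limit framework, the entries of $R_n$ in row $e$ and the columns of $u$ are $(\p_n(u)-\p_n(v))/\|\p_n(u)-\p_n(v)\|\to g(u,e)$. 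Hence $R_n\to R(G,\p,g)$ entrywise.

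Rank is lower semicontinuous: a nonzero minor of size $r$ in $R(G,\p,g)$ stays nonzero in $R_n$ for large $n$, since it is a polynomial in the entries. Therefore, for large $n$,
\[
\text{rank}(R(G,\p_n))\ge \text{rank}(R(G,\p,g))=d|V|-\binom{d+1}{2}.
\]
I still need the matching upper bound, together with the equivalence of this rank condition with infinitesimal rigidity, for $(G,\p_n)$. The affine span of $\p(V)$ is at least $(d-1)$-dimensional. Affine independence of $d$ chosen points is an open condition, so $\p_n(V)$ also has affine span of dimension at least $d-1$ for large $n$. Consequently the trivial infinitesimal motions of $(G,\p_n)$ form a space of dimension exactly $\binom{d+1}{2}$, all contained in the kernel of $R(G,\p_n)$. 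So the rank is at most $d|V|-\binom{d+1}{2}$, and equality holds. By the criterion recalled earlier, $(G,\p_n)$ is infinitesimally rigid.

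Finally, as recalled before the proof of \cref{prop:strong_type2_rigid_partition}, if some embedding of $G$ is infinitesimally rigid then $G$ is $d$-rigid. This follows because the rank of the rigidity matrix is maximized at generic points: it is a polynomial condition that is nonvanishing somewhere. The only delicate point is the bookkeeping in the rescaling step, namely that the rescaled matrix converges precisely to $R(G,\p,g)$, including the sign convention $g(v,e)=-g(u,e)$. This is exactly what the definition of $g$ provides, so I do not expect a genuine obstacle.
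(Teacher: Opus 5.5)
Your proposal is correct and follows the paper's proof step for step: you rescale the rows of $R(G,\p_n)$ by the edge lengths, use entrywise convergence to $R(G,\p,g)$ together with lower semicontinuity of rank, and observe that the affine-span condition persists for large $n$. Your explicit upper bound on the rank via the $\binom{d+1}{2}$-dimensional space of trivial motions just spells out what the paper leaves implicit when it concludes that $(G,\p_n)$ is infinitesimally rigid.
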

\begin{proof}
    Let $\{\p_n:V\to \mathbb{R}^d\}_{n=1}^{\infty}$ be a sequence such that $\{(G,\p_n)\}_{n=1}^{\infty}$ converges to $(G,\p,g)$. We will show that for $n$ large enough, $(G,\p_n)$ is infinitesimally rigid, and therefore $G$ is $d$-rigid.  Let $m=\dim(\p(V))$ denote the dimension of the affine span of the image of $\p$. Similarly, for $n\ge 1$, let $m_n=\dim(\p_n(V))$. Note that, for large enough $n$, $m_n\ge m\ge d-1$.

    For $n\ge 1$, let $R_n$ be the matrix obtained from $R(G,\p_n)$ by dividing, for each edge $e=\{u,v\}\in E$ such that $\p_n(u)\ne \p_n(v)$, the row associated with $e$ by $\|\p_n(u)-\p_n(v)\|$. Note that $R_n$ has the same rank as $R(G,\p_n)$, and that the sequence $R_n$ converges entry-wise to $R(G,\p,g)$. Therefore, there exists $n\ge 1$ such that $m_n\ge m \ge d-1$ and
  \[
        \text{rank}(R(G,\p_n))=\text{rank}(R_n)\ge \text{rank}(R(G,\p,g)) = d|V|-\binom{d+1}{2},
   \]
    where we used the fact that $(G,\p,g)$ is infinitesimally rigid. Therefore, $(G,\p_n)$ is infinitesimally rigid, as wanted.
\end{proof}

For the proof of \cref{thm:gen_partitions}, we will need the following auxiliary results on limit frameworks, which were proved in~\cite{KLM25} (and which rely on and extend earlier work by Tay~\cite{Tay1993laman}).

\begin{lemma}[{\cite[Lemma 2.4]{KLM25}}]
\label{lemma:constructing_limit_frameworks}
Let $G=(V,E)$. Let $V=V_1\cup\cdots\cup V_m$ be a partition of $V$, and let $x_1,\ldots,x_m$ be $m$ distinct points in $\RR^d$.
Assume we have for each $1\leq i\leq m$ a $d$-dimensional limit framework $(G[V_i],\p_i,g_i)$.
Define $g: \{(u,e):\, e\in E, u\in e\}\to \RR^d$ by
\[
g(u,\{u,v\})= \begin{cases}
    (x_i-x_j)/\|x_i-x_j\| & \text{ if } u\in V_i, v\in V_j \text{ for } i\neq j,\\
    g_i(u,\{u,v\}) & \text{ if } u,v\in V_i \text{ for } 1\leq i\leq m,
\end{cases}
\]
for all $\{u,v\}\in E$. Let $\p:V\to \RR^d$ be defined by $\p(v)=x_i$ for each $i\in[m]$ and $v\in V_i$. Then, $(G,\p,g)$ is a $d$-dimensional limit framework.
\end{lemma}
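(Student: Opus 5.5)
We need to prove Lemma (constructing limit frameworks): given limit frameworks on each part, collapse each part to point $x_i$, and the result is a limit framework. The plan is to construct an explicit sequence $\p_n$ realizing the limit.

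Fix, for each $i$, a sequence $\{\p_{i,n}\}_n$ with $(G[V_i],\p_{i,n})\to(G[V_i],\p_i,g_i)$. The natural first attempt is to shrink each part and place it near $x_i$: set, for $v\in V_i$,
\[
\p_n(v)=x_i+\eps_n\,\p_{i,n}(v),
\]
where $\eps_n\searrow 0$ is a sequence to be chosen. I then verify the three kinds of requirements. First, $\p_n(v)\to x_i=\p(v)$, because the sequence $\p_{i,n}(v)$ converges and is therefore bounded. Second, for an edge $\{u,v\}$ with $u,v\in V_i$, the difference is $\p_n(u)-\p_n(v)=\eps_n(\p_{i,n}(u)-\p_{i,n}(v))$. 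The positive scalar $\eps_n$ cancels upon normalizing, so the normalized vector equals that of $\p_{i,n}$ and converges to $g_i(u,\{u,v\})$. Third, for an edge with $u\in V_i$, $v\in V_j$, $i\ne j$, we have $\p_n(u)-\p_n(v)=x_i-x_j+\eps_n(\p_{i,n}(u)-\p_{j,n}(v))$. This tends to $x_i-x_j\neq 0$ because the $x_i$ are distinct and the perturbation is $\eps_n$ times a bounded term. Since normalization is continuous away from $0$, the normalized vector converges to $(x_i-x_j)/\|x_i-x_j\|$, as required.

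The only subtlety, and the main point to check, is whether the scaling might have to interact with how fast the inner sequences converge. In fact it does not, because the intra-part directions are scale-invariant. So any $\eps_n\to0$, for instance $\eps_n=1/n$, works; boundedness of each convergent sequence $\p_{i,n}(v)$ is all that is used.

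A second technical point is that the definition of the limit requires $\p_n(u)\neq\p_n(v)$ on edges, at least for large $n$, so that the normalization makes sense. Within a part this is inherited from the assumed convergence of the sequence $\p_{i,n}$: the normalized vectors exist for large $n$. Across parts it holds for large $n$ since the difference tends to $x_i-x_j\ne0$. Discarding finitely many initial terms then gives a sequence $\{(G,\p_n)\}$ converging to $(G,\p,g)$, so $(G,\p,g)$ is a $d$-dimensional limit framework.
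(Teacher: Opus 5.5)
Your proposal is correct and is essentially the standard argument behind this lemma, which the paper cites from \cite{KLM25} without reproving. You define $\p_n(v)=x_i+\eps_n\p_{i,n}(v)$ for $v\in V_i$. The intra-part directions are unchanged by the positive scaling, and the inter-part directions converge because $x_i-x_j\neq 0$ and normalization is continuous away from $0$; together these give the three required convergences.
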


\begin{lemma}[{\cite[Lemma 2.5]{KLM25}}]
\label{lemma:combing}
  Let $G=(V,E)$ be a graph, and let $E=E_1\cup \cdots\cup E_k$ be a partition of its edge set.
  Let $y_1,\ldots, y_k\in \RR^d$ with $\|y_i\|=1$ for all $1\le i\le k$.
  If for every $U\subseteq V$ of size at least $2$  there exist $1\le i\le k$ and a partition $U',U''$ of $U$ such that $E(U',U'')\subseteq E_i$,
  then there exists a $d$-dimensional limit framework $(G,\p,g)$ such that, for every $i\in [k]$,
  $g(u,\{u,v\})\in \{y_i,-y_i\}$ for all $\{u,v\}\in E_i$.
\end{lemma}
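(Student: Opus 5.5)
We prove \cref{lemma:combing} by constructing the sequence $\p_n$ through a recursive hierarchical decomposition of $V$ guided by the monochromatic cut hypothesis. The idea is to realize each split at a strictly smaller scale than the one above it. Then every edge's direction is dominated by the coarsest level at which its endpoints are separated, and that direction is $\pm y_i$ for the colour of that cut.

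\textbf{Inductive statement.} We show by induction on $|U|$ that for every $U\subseteq V$ there is a family of maps $\p^U_t:U\to\RR^d$, $t\in(0,1]$, with the following properties:
\begin{itemize}
\item[(a)] $\|\p^U_t(v)\|\le C_U$ for all $v\in U$ and $t\in(0,1]$, for some constant $C_U$;
\item[(b)] for every edge $\{u,v\}\in E$ with $u,v\in U$ and $\{u,v\}\in E_i$, we have $\p^U_t(u)\neq \p^U_t(v)$ for all small $t$;
\item[(c)] for such edges, $(\p^U_t(u)-\p^U_t(v))/\|\p^U_t(u)-\p^U_t(v)\|$ converges as $t\to 0$ to a vector in $\{y_i,-y_i\}$.
\end{itemize}

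\textbf{Base case and recursive construction.} For $|U|=1$, take $\p^U_t\equiv 0$. For $|U|\ge 2$, the hypothesis gives a colour $i$ and a partition $U=U'\cup U''$ with $E(U',U'')\subseteq E_i$. Apply induction to $U'$ and $U''$ and set
\[
\p^U_t(v)=\mathbbm{1}[v\in U'']\,y_i+t\,\p^{W}_t(v),
\]
where $W\in\{U',U''\}$ is the part containing $v$. Property (a) is immediate.

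\textbf{Checking (b) and (c).} Consider a cross edge $u\in U'$, $v\in U''$. Its difference is $-y_i+O(t)$, which is nonzero for small $t$ and whose normalization tends to $-y_i$. Since $E(U',U'')\subseteq E_i$, this is exactly the required colour. Now consider an edge inside $W$. Its difference is $t(\p^W_t(u)-\p^W_t(v))$, and the positive factor $t$ cancels on normalization, so (b) and (c) are inherited from $W$. The scale separation is what makes this work. All differences within $W$ are multiplied by $t$, so they are negligible against the $O(1)$ cross offset, while their own directions are unchanged.

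\textbf{Conclusion.} Set $\p_n(v)=n^{-1}\p^V_{1/n}(v)$. By (a), $\p_n(v)\to 0$ for all $v$; the limit framework may sit at a single point, since the lemma imposes no spread condition. Scaling by $n^{-1}$ does not affect normalized directions, so by (c) the normalized edge vectors converge. Defining $g(u,e)$ as these limits yields a limit framework $(G,0,g)$ with $g(u,\{u,v\})\in\{y_i,-y_i\}$ for every $\{u,v\}\in E_i$.

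\textbf{Main obstacle.} The delicate point is making the scale separation rigorous. We need nonvanishing of differences and uniform control of the error terms across nested levels. We handle this by carrying the boundedness (a) and nonvanishing (b) through the induction. Since $V$ is finite, only finitely many levels and edges occur, so taking $t$ small enough suffices for all of them simultaneously.
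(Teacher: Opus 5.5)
Your proof is correct and follows essentially the same route as the cited proof: induct along the decomposition given by the monochromatic cuts, place the two sides of each cut at points differing by $y_i$, and shrink each side by a vanishing factor, so that inner edges keep their directions while cross edges point in direction $\pm y_i$. The only differences are that you inline the two-part gluing step (which is exactly \cref{lemma:constructing_limit_frameworks} with $m=2$) and take the collapsed limit position $\p\equiv 0$. The latter is harmless, since the lemma imposes no condition on $\p$ and the later application re-places each part at $x_i$ anyway.
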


\subsection{Proof of \cref{thm:gen_partitions}}

We proceed to prove \cref{thm:gen_partitions}. The proof outline is as follows: First, we show, using the monochromatic cuts property, that there is a $d$-dimensional limit framework $(G,\p,g)$ with certain nice properties (in particular, such that the vertices in each part $V_i$ are mapped to a single point $x_i$). Then, we apply the anchored parts property to show that in every infinitesimal motion $\bq$ of $(G,\p,g)$, the vertices in each $V_i$ must obtain the same value. This allows us to reduce $\bq$ to an infinitesimal motion on the reduced graph $G'$. Since $G'$ is assumed to be $d$-rigid, this implies that $\bq$ is a trivial infinitesimal motion, showing that $G$ is $d$-rigid as well.

\begin{proof}[Proof of \cref{thm:gen_partitions}]
    Let $G=(V,E)$ be a graph, let $m\ge d\ge 1$. Let $m_i\ge m$ for all $i\in[m]$ and $\mathcal{I}=\{(i,j):\, 1\le i\le m,\, i<j\le m_i\}$. Let $(\{V_i\}_{i=1}^m, \{G_{ij}\}_{(i,j)\in\mathcal{I}})$ be a generalized $d$-rigid partition of $G$. Let $m'=\max\{m_i:\, 1\le i\le m\}$, and let $x_1,\ldots,x_{m'}\in \RR^d$ be a generic set of points.  For $i\ne j$, let $y_{ij}= (x_i-x_j)/\|x_i-x_j\|$.

    Since addition of edges preserves $d$-rigidity, we may assume without loss of generality that $E=\bigcup_{(i,j)\in\mathcal{I}} E(G_{ij})$. By Lemma \ref{lemma:combing} (and using the monochromatic cuts property), for every $1\le i\le m$ there exists a $d$-dimensional limit framework $(G[V_i],\p_i,g_i)$ such that $g_i(u,e)\in\{y_{ij},-y_{ij}\}$ for every $j\in[m_i]\sm\{i\}$, $e\in E(G_{ij}[V_i])$ and $u\in e$. Hence, by Lemma \ref{lemma:constructing_limit_frameworks}, there exists a $d$-dimensional limit framework $(G,\p,g)$ such that $\p(v)=x_i$ for every $i\in[m]$ and $v\in V_i$, and
    $g(u,e)\in\{y_{ij},-y_{ij}\}$ for every $(i,j)\in\mathcal{I}$, $e\in E(G_{ij})$ and $u\in e$.

    Let $G'=([m],E')$ be the reduced graph of $(G,\{V_i\}_{i=1}^m, \{G_{ij}\}_{(i,j)\in\mathcal{I}})$. Let $\p':[m]\to\RR^d$ be defined by $\p'(i)=x_i$ for all $i\in[m]$.
    Since $G'$ is $d$-rigid and $\p'$ is generic, the framework $(G',\p')$ is infinitesimally rigid.

    We will show that $(G,\p,g)$ is infinitesimally rigid, and therefore, by Lemma \ref{lemma:limit_rigidity}, $G$ is $d$-rigid.
    Let $\bq:V\to \RR^d$ be an infinitesimal motion of $(G,\p,g)$.
    Note that, for every $(i,j)\in\mathcal{I}$ and $e=\{u,v\}\in E(G_{ij})$, we have $g(u,e)=-g(v,e)\in \{y_{ij},-y_{ij}\}$, and therefore
    \[
        (\bq(u)-\bq(v))\cdot y_{ij}= (\bq(u)-\bq(v))\cdot g(u,e)=0.
    \]
    That is, $(\bq(u)-\bq(v))\cdot (x_i-x_j)= 0$ whenever $u$ and $v$ are adjacent in $G_{ij}$, or more generally whenever they are connected by a path in $G_{ij}$. Since the set $x_1,\ldots,x_{m'}$ is generic, this equation holds also under the specialization $x_i=0$. Therefore,
    \[
       (\bq(u)-\bq(v))\cdot x_j = 0
    \]
    for all $u,v\in V_i\cup V_j$ that are connected by a path in $G_{ij}$.  In particular, this shows that, for all $1\le i\le m$, the restriction of $\bq$ to $V_i$ is a motion of the $i$-th anchoring graph $(H_i,c_i)$. By the anchored parts condition, $(H_i,c_i)$ is $d$-anchored for all $1\le i\le m$, and therefore $\bq(u)=\bq(v)$ for all $1\le i\le m$ and $u,v\in V_i$.

    Now, define $\bq':[m]\to \RR^d$ by $\bq'(i) = \bq(u)$ for all $i\in [m]$, where $u$ is some vertex in $V_i$. Note that this is well defined since $\bq(u)=\bq(v)$ for all $u,v\in V_i$.
    Let $1\le i< j\le m$ with $\{i,j\}\in E'$. By the definition of $G'$, there exist $u\in V_i$ and $v\in V_j$ such that $\{u,v\}\in E(G_{ij})$.  Since $\bq$ is an infinitesimal motion of $(G,\p,g)$, we obtain
    \[
        (\bq'(i)-\bq'(j))\cdot \frac{x_i-x_j}{\|x_i-x_j\|} = (\bq(u)-\bq(v))\cdot y_{ij}= (\bq(u)-\bq(v))\cdot g(u,\{u,v\})=0.
    \]
    Therefore, $\bq'$ is an infinitesimal motion of $(G',\p')$. Since $(G',\p')$ is infinitesimally rigid, $\bq'$ must be trivial. That is, there are a  skew-symmetric matrix $A\in\RR^{d\times d}$ and vector $x\in \RR^d$ such that $\bq'(i)= A x_i + x$ for all $i\in[m]$. But then, for all $i\in [m]$ and $v\in V_i$, we obtain
    \[
        \bq(v) = \bq'(i)= A x_i +x = A\p(v)+x.
    \]
    That is, $\bq$ is a trivial infinitesimal motion of $(G,\p,g)$. Thus, $(G,\p,g)$ is infinitesimally rigid, as wanted.
\end{proof}

\section{Concluding remarks}\label{sec:concluding}

\subsection{Another variant of rigid partitions}

Let us mention the following variant of \cref{thm:strong_rigid_partition_2}, which demonstrates the additional strength of \cref{thm:gen_partitions}, and may be useful, for example, in the study of rigidity of bipartite graphs.

\begin{theorem}\label{thm:double_partition}
    Let $m\ge d\ge 1$.  Let $G=(V,E)$ be a graph, and let $V_1,\ldots,V_m$ be a partition of $V$. For each $i\in[m]$, let $k_i\ge 1$, and let $V_{i1},\ldots,V_{i k_i}$ be a partition of $V_i$. Assume that the reduced graph $G'$ associated with $(G,\{V_1,\ldots,V_m\})$ is $d$-rigid. Moreover, assume that for all $i\in [m]$, $G[V_i]$ contains a forest $F_i$ such that the multi-graph obtained from $F_i$ by contracting each set $V_{ij}$, for $1\le j\le k_i$, into a single vertex (and removing self-loops), contains $d$ edge-disjoint spanning trees. Finally, for every $i\in [m]$ and $j\in [k_i]$, let $Q_{ij}=(V_{ij},E'_{ij})$, where $E'_{ij}$ consists of all pairs of vertices $u,v\in V_{ij}$ for which there are at least $d$ distinct indices $s\in[m]\sm\{i\}$ such that $u$ and $v$ are connected by a path in $G[V_i,V_s]$. Assume that $Q_{ij}$ is connected for all $i\in[m]$ and $j\in[k_i]$. Then, $G$ is $d$-rigid.
\end{theorem}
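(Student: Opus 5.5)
We will deduce \cref{thm:double_partition} from \cref{thm:gen_partitions} by exhibiting a generalized $d$-rigid partition $(\{V_i\}_{i=1}^m,\{G_{ij}\}_{(i,j)\in\mathcal{I}})$ of $G$. Since adding edges preserves $d$-rigidity, we may discard every edge inside each $V_i$ except those of $F_i$. By the Nash-Williams--Tutte assumption, the contracted multigraph $F_i/\{V_{i1},\dots,V_{ik_i}\}$ contains $d$ edge-disjoint spanning trees $T_{i,1},\dots,T_{i,d}$, each of which is a set of edges of $F_i$. Set $m_i=m+d$, and for $1\le t\le d$ let $G_{i,m+t}$ be the subgraph on $V_i$ whose edges are those of $F_i$ lying in $T_{i,t}$. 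Put any remaining edges of $F_i$ (including those inside a single $V_{ij}$) into $G_{i,m+1}$. For $i\neq j\le m$, let $G_{ij}=G[V_i,V_j]$. These subgraphs are pairwise edge-disjoint, and the reduced graph is the $G'$ of the statement, so the \emph{rigid reduced graph} property holds.

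\emph{Monochromatic cuts.} Let $U\subseteq V_i$ with $|U|\ge2$. The edges of $\hat G$ inside $U$ are edges of the forest $F_i$. If $F_i[U]$ is disconnected, split $U$ along a component; the cut is empty and hence monochromatic. Otherwise $F_i[U]$ is a tree. Removing any single edge $e$ of it splits $U$ into $U',U''$, and $E(U',U'')=\{e\}$, which lies in a single $G_{ij}$. The forest structure is exactly what makes this step work.

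\emph{Anchored parts.} This is the main step. Let $\bq$ be a motion of $(H_i,c_i)$ with generic points $x_1,\dots,x_{m+d}$.

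First, within each $V_{ij}$, connectivity of $Q_{ij}$ together with \cref{lemma:trivial_anchoring} and the linear independence of any $d$ of the vectors $x_s$, $s\in[m]\sm\{i\}$, gives $\bq$ constant on $V_{ij}$. This is the same argument as in the proof of \cref{thm:strong_rigid_partition_2}. Call this constant value $\bq_j$.

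Next, each edge $\{u,v\}$ of $T_{i,t}$ joins parts $V_{ij}$ and $V_{ij'}$ and has colour $m+t$ in $H_i$. The motion condition therefore gives $(\bq_j-\bq_{j'})\cdot x_{m+t}=0$. Since $T_{i,t}$ spans the contracted graph, connectivity yields $(\bq_j-\bq_{j'})\cdot x_{m+t}=0$ for all $j,j'$ and all $t\in[d]$. Because $x_{m+1},\dots,x_{m+d}$ are linearly independent, all the $\bq_j$ coincide, so $\bq$ is constant on $V_i$.

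One subtlety must be checked: \cref{def:partitions} draws the colours $j>m$ from the same generic set as those for $j\le m$. This is harmless, because the proof of \cref{thm:gen_partitions} uses a single generic set $x_1,\dots,x_{m'}$ with $m'=\max_i m_i$.

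With all three properties verified, \cref{thm:gen_partitions} yields that $G$ is $d$-rigid.
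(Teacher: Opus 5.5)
Your proof is correct, but it takes a different route from the paper's in how the edges of $F_i$ are coloured.

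\textbf{The paper's route.} The paper sets $m_i=m+|E(F_i)|$ and gives every edge of $F_i$ its own subgraph $G_{ij}$. After collapsing each $V_{ij}$ via connectivity of $Q_{ij}$ (exactly as in your first step), the contracted forest becomes a $d$-frame in Whiteley's sense, with all edges distinctly coloured. The paper then cites Whiteley's theorem \cite{whiteley1988union}: such a frame containing $d$ edge-disjoint spanning trees is $d$-anchored.

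\textbf{Your route.} You set $m_i=m+d$ and give each spanning tree $T_{i,t}$ a single colour. The anchoring step then reduces to the same transitivity-plus-linear-independence argument used for the $Q_{ij}$ and in the proof of \cref{thm:strong_rigid_partition_2}. The monochromatic cuts property still holds because $\hat G[V_i]$ remains inside the forest $F_i$; your component/single-edge argument and the paper's leaf argument are equivalent.

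\textbf{What each buys.} Your route is self-contained and needs no external matroid-union result. In fact, your colouring is a specialization of the paper's per-edge colouring. Generic rank is at least the rank at any specialization, so your argument also gives a short proof of the direction of Whiteley's theorem that the paper uses. The paper's per-edge colouring, in exchange, places the argument directly in Whiteley's $d$-frame setting. That connects it to the open question in the concluding section of characterizing $d$-anchored coloured multigraphs.

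A minor point: the $d$ edge-disjoint spanning trees are given directly by the hypothesis, so no appeal to Nash-Williams--Tutte is needed.
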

\begin{proof}

    For $1\le i\le m$, let $m_i=m+|E(F_i)|$, and let $\mathcal{I}=\{(i,j):\, 1\le i\le m,\, i<j\le m_i\}$.
    For $1\le i<j\le m$, let $G_{ij}=G_{ji}=G[V_i,V_j]$. For $1\le i\le m$, enumerate the edges of $F_i$ arbitrarily as $e_1,\ldots,e_{|E(F_i)|}$. For $m+1\le j\le m_i$, let $G_{ij}$ be the graph on vertex set $V_i$ whose edge set consists of the single edge $e_{j-m}$.

    We will show that $(G,\{V_i\}_{i=1}^m,\{E_{ij}\}_{(i,j)\in\mathcal{I}})$ is a generalized $d$-rigid partition of $G$.
    Indeed, let us first show that the monochromatic cuts property is satisfied. Let $i\in[m]$ and $U\subseteq V_i$ with $|U|\ge 2$. The case when $U$ has no edges in any of the subgraphs $G_{ij}$ is trivial. Therefore, let us assume that there exists $e\in F_i$ such that $e\subseteq U$. Note that $F_i[U]$ is a non-empty forest on at least two vertices, and so it must have a leaf $u\in U$. Taking $U'=\{u\}$ and $U''=U\sm \{u\}$, we obtain a cut of $U$ with a single edge (and therefore, trivially, a monochromatic cut).

    Next, let us show that our partition satisfies the anchored parts condition. For $i\in[m]$, let $(H_i,c_i)$ be the $i$-th anchoring graph associated with our partition of $G$. Let $x_1,\ldots,x_{m_i}$ be a generic set of points in $\RR^d$. Let $\bq:V_i\to\RR^d$ be a motion of $(H_i,c_i)$. That is, for every $u,v\in V_i$ that are connected by a path in $G_{ij}$ for $j\in[m_i]\sm\{i\}$, we have
    \[
        (\bq(u)-\bq(v))\cdot x_{j}=0.
    \]
    First, note that, by \cref{lemma:trivial_anchoring} and the fact that $Q_{ij}$ is connected for every $j\in [k_i]$, we must have $\bq(u)=\bq(v)$ for every $j\in [k_i]$ and $u,v\in V_{ij}$.

    Let $F_i'$ be the coloured multi-graph obtained from $F_i$ by contracting each set $V_{ij}$ into a single vertex. Note that each edge of $F_i'$ has a distinct colour.
    Let $\bq': V(F'_i)\to\RR^d$ be the map induced by $\bq$. It is easy to check that $\bq'$ is a motion of $F_i'$.
    It follows from~\cite[Theorem 1]{whiteley1988union} that, since $F_i'$ contains $d$ edge-disjoint spanning trees, then $F_i'$ is $d$-anchored, and in particular $\bq'$ is trivial.
    Therefore, $\bq$ is trivial, as wanted. That is, $(H_i,c_i)$ is $d$-anchored.

    Finally, the reduced graph of $G$ associated with $(\{V_i\}_{i=1}^m,\{E_{ij}\}_{(i,j)\in\mathcal{I}})$ is $d$-rigid by assumption. Thus, $(G,\{V_i\}_{i=1}^m,\{E_{ij}\}_{(i,j)\in\mathcal{I}})$ is a generalized $d$-rigid partition of $G$, and so, by \cref{thm:gen_partitions}, $G$ is $d$-rigid.
\end{proof}

\Cref{thm:double_partition} generalizes~\cite[Lemma 2.11]{KLM25}, corresponding to the special case when $G=(A,B,E)$ is bipartite, $m=d+1$, and each part $V_i$ in the partition is divided into two parts, $V_i\cap A$ and $V_i\cap B$.

\subsection{Characterization of $d$-anchored graphs}
The notion of $d$-anchored coloured multi-graphs may be seen as a generalization of the rigidity of $d$-frames, studied by Whiteley in~\cite{whiteley1988union}, which corresponds to the special case when every edge in the multi-graph $G$ is assigned a different colour by the map $c$. As mentioned in the proof of \cref{thm:double_partition}, Whiteley showed that, in this case, the graph $(G,c)$ is $d$-anchored if and only if it contains $d$ edge-disjoint spanning trees. It would be interesting to obtain an extension of this characterization to the general case.
Note that a minimally $d$-anchored graph requires, in addition to being the union of $d$ edge-disjoint spanning trees, that, for every colour $k$, its $k$-coloured subgraph forms a forest, as otherwise the matrix associated with the $d$-anchoring system of equations for the graph will not be of full rank. However, these two necessary conditions are not sufficient. Indeed, let $(G,c)$ be the multi-graph on vertex set $[3]$ with two red edges $\{1,2\},\{1,3\}$, one blue edge $\{2,3\}$, and one green edge $\{2,3\}$. It is easy to check that $G$ is the edge-disjoint union of two spanning trees, and that each monochromatic subgraph of $G$ is a forest, but $(G,c)$ is not $2$-anchored.

\subsection{Some directions for further research}

It is natural to ask whether the lower bound $k=\Omega(\log{n})$ on the minimum codegree of an $n$-vertex graph $G$ is required for the conclusion of \cref{thm:codegree} to hold. Namely, is there an absolute constant $c>0$, such that for every $k\ge 1$, every graph $G$ with minimum codegree at least $k$ is $\lfloor ck\rfloor $-rigid?

Another natural question, which we plan to address in future work, is whether the methods in this paper could be used to obtain new results on the rigidity of pseudorandom graphs.
Recall that an $n$-vertex $k$-regular graph is called an $(n,k,\lambda)$-graph if the absolute values of all its non-trivial adjacency eigenvalues are at most $\lambda$ (see, for example,~\cite{KS06} for more details). In~\cite{KLM25}*{Theorem 1.7}, we showed that there exists $C>1$, such that if $G$ is an $(n,k,\lambda)$ graph with $k\ge \max\{9d\lambda, Cd \log{d}\}$, then $G$ is $d$-rigid. Vill\'anyi's results from~\cite{Vil23+} imply that an $(n,k,\lambda)$-graph is $\lfloor(1-o(1))\sqrt{k-\lambda}\rfloor$-rigid (see~\cite{KLM25} for more details). It would be interesting to determine whether there exist absolute constants $c_1,c_2>0$ such that every $(n,k,\lambda)$-graph $G$ with $\lambda/k<c_1$ is $\lfloor c_2 k\rfloor$-rigid.

\bibliography{library}

\end{document}